\definecolor{darkblue}{rgb}{0, 0, .4}
\definecolor{grey}{rgb}{.7, .7, .7}
\newtheorem{theorem}{Theorem}[section]
\newtheorem{lemma}[theorem]{Lemma}
\theoremstyle{definition}
\newtheorem{definition}[theorem]{Definition}
\newtheorem{example}[theorem]{Example}
\newtheorem{computerExample}[theorem]{Sage Example}
\theoremstyle{remark}
\newtheorem{remark}[theorem]{Remark}
\numberwithin{equation}{section}
\theoremstyle{theorem}
\newtheorem{corollary}[theorem]{Corollary}
\newtheorem{proposition}[theorem]{Proposition}
\newtheorem{conjecture}[theorem]{Conjecture}
\newcommand{\union}{\cup}
\newcommand{\gn}{ \bullet }  
\newcommand{\p}{\dot{p}}
\newcommand{\sage}{{\tt Sage} }
\newcommand{\sgn}{\mathrm{sgn}}
\newcommand{\wt}{\mathrm{wt}}
\newcommand{\zero}{\mathbf{0}}
\newcommand{\inner}[2]{\langle #1, #2 \rangle}
\begin{document}

\title{Affine structures and a tableau model for $\mathbf{E_6}$ crystals}

\begin{abstract}
We provide the unique affine crystal structure for type $E_6^{(1)}$
Kirillov--Reshetikhin crystals corresponding to the multiples of fundamental weights
$s\Lambda_1, s\Lambda_2$, and $s\Lambda_6$ for all $s\ge 1$ (in Bourbaki's
labeling of the Dynkin nodes, where $2$ is the adjoint node). Our
methods introduce a generalized tableaux model for classical highest weight
crystals of type $E$ and use the order three automorphism of the affine
$E_6^{(1)}$ Dynkin diagram. In addition, we provide a conjecture for the affine
crystal structure of type $E_7^{(1)}$ Kirillov--Reshetikhin crystals
corresponding to the adjoint node.
\end{abstract}

\author{Brant Jones}
\author{Anne Schilling}

\address{Department of Mathematics, One Shields Avenue, University of California, Davis, CA 95616}
\email{\href{mailto:[brant,anne]@math.ucdavis.edu}{\texttt{[brant,anne]@math.ucdavis.edu}}}
\urladdr{\url{http://www.math.ucdavis.edu/\~[brant,anne]/}}

\thanks{BJ was partially supported by NSF grant DMS-0636297.
AS was partially supported by the NSF grants DMS--0501101, DMS--0652641, and DMS--0652652.}

\subjclass{81R50; 81R10; 17B37; 05E99}
\keywords{Affine crystals, Kirillov--Reshetikhin crystals, type $E_6$}

\date{\today}

\maketitle


\bigskip
\section{Introduction}\label{s:intro}

A uniform description of perfect crystals of level 1 corresponding to the highest 
root $\theta$ was given in~\cite{BFKL:2006}.
A generalization to higher level $s$ for certain nonexceptional types was studied  
in~\cite{kodera}. These crystals $B$ of level $s$ have the following decomposition
when removing the zero arrows \cite{Chari:2001}:
\begin{equation} \label{eq:adjoint decomp}
	B \cong \bigoplus_{k=0}^s B(k \theta),
\end{equation}
where $B(\lambda)$ denotes the highest weight crystal with highest weight
$\lambda$.

In this paper, we provide the unique affine crystal structure for the 
Kirillov--Reshetikhin crystals $B^{r,s}$ of type $E_6^{(1)}$ for the Dynkin nodes $r=1,2,$ and $6$ in 
the Bourbaki labeling, where node 2 corresponds to the adjoint node (see
Figure~\ref{fig:E67}).
In addition, we provide a conjecture for the affine crystal structure for type $E_7^{(1)}$
Kirillov--Reshetikhin crystals of level $s$ corresponding to the adjoint node.

Our construction of the affine crystals uses the classical decomposition~\eqref{eq:adjoint decomp}
 together with a promotion operator which yields the affine crystal operators.
Combinatorial models of all Kirillov--Reshetikhin crystals of nonexceptional types were
constructed using promotion and similarity methods in~\cite{schilling:2008,OS:2008,fos:2009a}.
Perfectness was proven in~\cite{fos:2009b}.
Affine crystals of type $E_6^{(1)}$ and $E_7^{(1)}$ of level 1 corresponding to minuscule 
coweights ($r=1,6$) were studied by Magyar~\cite{magyar:2006} using the Littelmann path model.
Hernandez and Nakajima~\cite{hernandez_nakajima:2006} gave a construction of the 
Kirillov--Reshetihkin crystals $B^{r,1}$ for all $r$ for type $E_6^{(1)}$ and most nodes $r$
in type $E_7^{(1)}$.

For nonexceptional types, the classical crystals appearing in the 
decomposition~\eqref{eq:adjoint decomp} can be described using Kashiwara--Nakashima
tableaux~\cite{KN}. We provide a similar construction for general types (see Theorem~\ref{t:tableaux}). 
This involves the explicit construction of the highest weight crystals $B(\Lambda_i)$ corresponding 
to fundamental weights $\Lambda_i$ using the Lenart--Postnikov~\cite{lenart-postnikov} model
and the notion of pairwise weakly increasing columns (see Definition~\ref{d:pwi}).

The promotion operator for the Kirillov--Reshetikhin crystal $B^{r,s}$ of type $E_6^{(1)}$ 
for $r=1,6$ is given in Theorem~\ref{t:E6_16} and for $r=2$ in Theorem~\ref{t:L2_main}.
Our construction and proofs exploit the notion of composition graphs 
(Definition~\ref{d:composition_graph}) and the fact that the promotion operator we choose has order 
three. As shown in Theorem~\ref{t:main}, a promotion operator of order three yields a regular crystal.
In Conjecture~\ref{conj:E7} we also provide a promotion operator of order two for the crystals 
$B^{1,s}$ of type $E_7^{(1)}$. However, for order two promotion operators 
the analogue of Theorem~\ref{t:main} is missing.

This paper is structured as follows. In Section~\ref{s:tableaux}, the fundamental crystals
$B(\Lambda_1)$ and $B(\Lambda_6)$ are constructed explicitly for type $E_6$ and it is 
shown that all other highest weight crystals $B(\lambda)$ of type $E_6$ can be constructed 
from these. Similarly, $B(\Lambda_7)$ yields all highest weight crystals $B(\lambda)$ for type
$E_7$. In Section~\ref{ss:gen_tab}, a generalized tableaux model is given for $B(\lambda)$
for general types. In particular, we introduce the notion of weak increase. The results are used 
to construct the affine crystals in Section~\ref{s:affine_structure}. In Section~\ref{s:sage_implementation},
we give some details about the \sage implementation of the $E_6$, $E_7$, and $E_6^{(1)}$ crystals
constructed in this paper. Some outlook and open problems are discussed in Section~\ref{s:outlook}.
Appendices~\ref{s:appendix} and~\ref{s:appendix_b} contain details
about the proofs for the construction of the affine crystals, in particular the usage of oriented 
matroid theory.

\subsection*{Acknowledgments}
We thank Daniel Bump for his interest in this work, reviewing some of our \sage code related
to $E_6$ and $E_7$, and his insight into connections of $B(\Lambda_1)$ of type $E_6$ and the Weyl
group action on 27 lines on a cubic surface.
We are grateful to Jesus DeLoera and Matthias Koeppe for their insights on oriented matroids.
We thank Masato Okado for pointing~\cite[Theorem 6.1]{KMOY:2007} out to us, and his comments 
and insights on earlier drafts of this work. We thank Satoshi Naito and Mark Shimozono 
for drawing our attention to monomial theory and references~\cite{LS:1986,littelmann:1996}.

For our computer explorations we used and implemented new features in the open-source
mathematical software \texttt{Sage}~\cite{sage} and its algebraic
combinatorics features developed by the \texttt{Sage-Combinat}
community~\cite{sage-combinat}; we are grateful to Nicolas M. Thi\'ery for all
his support.  Figure~\ref{fig:BLa7} was produced using \texttt{graphviz},
\texttt{dot2tex}, and \texttt{pgf/tikz}.


\bigskip
\section{A tableau model for finite-dimensional highest weight crystals}\label{s:tableaux}

In this section, we describe a model for the classical highest weight crystals
in type $E$.  In Section~\ref{ss:axiom}, we introduce our notation and give the
axiomatic definition of a crystal.  The tensor product rule for crystals is reviewed in
Section~\ref{ss:tensor}.  In Section~\ref{ss:fund_cr}, we give an
explicit construction of the highest weight crystals associated to the
fundamental weights in types $E_6$ and $E_7$.  In Section~\ref{ss:gen_tab}, we
give a generalized tableaux model to realize all of the highest weight crystals
in these types.  The generalized tableaux are type-independent, and can be
viewed as an extension of the Kashiwara--Nakashima tableaux \cite{KN} to type
$E$. For a general introduction to crystals we refer to~\cite{hk:2002}.

\subsection{Axiomatic definition of crystals} \label{ss:axiom}
Denote by $\mathfrak{g}$ a Lie algebra or symmetrizable Kac-Moody algebra, $P$ the weight
lattice, $I$ the index set for the vertices of the Dynkin diagram
of $\mathfrak{g}$, $\{\alpha_i\in P \mid i\in I \}$ the simple roots, and
$\{\alpha_i^\vee \in P^* \mid i\in I \}$ the simple coroots.
Let $U_q(\mathfrak{g})$ be the quantized universal enveloping algebra of
$\mathfrak{g}$. A \em $U_q(\mathfrak{g})$-crystal \em \cite{K:1995} is a nonempty set $B$
equipped with maps $\wt:B\rightarrow P$ and
$e_i,f_i:B\rightarrow B\cup\{\zero\}$ for all $i\in I$,
satisfying
\begin{align*}
f_i(b)=b' &\Leftrightarrow e_i(b')=b
\text{ if $b,b'\in B$} \\
\wt(f_i(b))&=\wt(b)-\alpha_i \text{ if $f_i(b)\in B$} \\
\label{eq:string length}
\inner{\alpha^\vee_i}{\wt(b)}&=\varphi_i(b)-\varepsilon_i(b).
\end{align*}
Here, we have 
\begin{equation*}
\begin{split}
\varepsilon_i(b)&= \max\{n\ge0\mid e_i^n(b)\not=\zero \} \\
\varphi_i(b) &= \max\{n\ge0\mid f_i^n(b)\not=\zero \}
\end{split}
\end{equation*}
for $b \in B$, and we denote $\inner{\alpha^\vee_i}{\wt(b)}$ by $\wt_i(b)$.
A $U_q(\mathfrak{g})$-crystal $B$ can be viewed as a directed edge-colored
graph called the \em crystal graph \em whose vertices are the elements of $B$,
with a directed edge from $b$ to $b'$ labeled $i\in I$, if and only if
$f_i(b)=b'$.  Given $i \in I$ and $b \in B$, the \em $i$-string through $b$ \em
consists of the nodes $\{ f_i^m(b) : 0 \leq m \leq \varphi_i(b) \} \cup \{
e_i^m(b) : 0 < m \leq \varepsilon_i(b) \}$.

Let $\{\Lambda_i \mid i\in I\}$ be the fundamental weights of $\mathfrak{g}$.
For every $b\in B$ define $\varphi(b)=\sum_{i\in I} \varphi_i(b) \Lambda_i$ and
$\varepsilon(b)=\sum_{i\in I} \varepsilon_i(b) \Lambda_i$. An element $b\in B$
is called \em highest weight \em if $e_i(b) = \zero$ for all $i\in I$.  We
say that $B$ is a \em highest weight crystal \em of highest weight $\lambda$ if
it has a unique highest weight element of weight $\lambda$.  For a dominant
weight $\lambda$, we let $B(\lambda)$ denote the unique highest-weight crystal
with highest weight $\lambda$.  

It follows from the general theory that every integrable
$U_q(\mathfrak{g})$-module decomposes as a direct sum of highest weight modules.
On the level of crystals, this implies that every crystal graph $B$
corresponding to an integrable module is a union of connected components, and
each connected component is the crystal graph of a highest weight module.  We
denote this by $B = \bigoplus B(\lambda)$ for some set of dominant weights
$\lambda$, and we call these $B(\lambda)$ the \em components \em of the crystal.

An \em isomorphism \em of crystals is a bijection $\Psi: B \cup \{\zero\}
\rightarrow B' \cup \{\zero\}$ such that $\Psi(\zero) = \zero$, $\varepsilon(\Psi(b)) =
\varepsilon(b)$, $\varphi(\Psi(b)) = \varphi(b)$, $f_i \Psi(b) = \Psi(f_i(b))$,
and $\Psi(e_i(c)) = e_i \Psi(c)$ for all $b,c \in B$, $\Psi(b),\Psi(c) \in B'$
where $f_i(b) = c$.

When $\widetilde{\lambda}$ is a weight in an affine type, we call 
\begin{equation}\label{e:level}
\langle \widetilde{\lambda}, c \rangle = \sum_{i \in I \cup \{0\}} a_i^{\vee} \langle \widetilde{\lambda}, \alpha_i^{\vee} \rangle
\end{equation}
the \em level \em of $\widetilde{\lambda}$, where $c$ is the canonical
central element and $\widetilde{\lambda} = \sum_{i\in I \cup \{0\}} \lambda_i \Lambda_i$ is the
affine weight.  In our work, we will often compute the 0-weight $\lambda_0
\Lambda_0$ at level 0 for a node $b$ in a classical crystal from the classical
weight $\lambda = \sum_{i\in I} \lambda_i \Lambda_i = \wt(b)$ by setting
$\langle \lambda_0 \Lambda_0 + \lambda, c \rangle = 0$ and solving for
$\lambda_0$.

Suppose that $\mathfrak{g}$ is a symmetrizable Kac--Moody algebra and let
$U_q'(\mathfrak{g})$ be the corresponding quantum algebra without derivation.
The goal of this work is to study crystals $B^{r,s}$ that correspond to 
certain finite dimensional $U_q'(\mathfrak{g})$-modules known as
Kirillov--Reshetikhin modules.  Here, $r$ is a node of the Dynkin diagram and
$s$ is a nonnegative integer.  The existence of the crystals $B^{r,s}$ that we
study follows from results in \cite{KKMMNN:1992}, while the classical
decomposition of these crystals is given in \cite{Chari:2001}.

\subsection{Tensor products of crystals} \label{ss:tensor}
Let $B_1,B_2,\dotsc,B_L$ be $U_q(\mathfrak{g})$-crystals. The
Cartesian product $B_1 \times B_2 \times \dotsm \times B_L$ has the
structure of a $U_q(\mathfrak{g})$-crystal using the so-called signature
rule. The resulting crystal is denoted
$B=B_1 \otimes B_2 \otimes\dots\otimes B_L$ and its elements
$(b_1,\dotsc,b_L)$ are written $b_1\otimes \dotsm \otimes b_L$
where $b_j\in B_j$. The reader is warned that our convention is
opposite to that of Kashiwara \cite{K:1995}. Fix $i\in I$ and
$b=b_1\otimes\dotsm\otimes b_L\in B$. The \em $i$-signature \em of $b$ is
the word consisting of the symbols $+$ and $-$ given by
\begin{equation*}
\underset{\text{$\varphi_i(b_1)$ times}}{\underbrace{-\dotsm-}}
\quad \underset{\text{$\varepsilon_i(b_1)$
times}}{\underbrace{+\dotsm+}} \,\dotsm\,
\underset{\text{$\varphi_i(b_L)$ times}}{\underbrace{-\dotsm-}}
\quad \underset{\text{$\varepsilon_i(b_L)$
times}}{\underbrace{+\dotsm+}} .
\end{equation*}
The \em reduced $i$-signature \em of $b$ is the subword of the
$i$-signature of $b$, given by the repeated removal of adjacent
symbols $+-$ (in that order); it has the form
\begin{equation*}
\underset{\text{$\varphi_i$ times}}{\underbrace{-\dotsm-}} \quad
\underset{\text{$\varepsilon_i$ times}}{\underbrace{+\dotsm+}}.
\end{equation*}
If $\varphi_i=0$ then $f_i(b)=\zero$; otherwise
\begin{equation*}
f_i(b_1\otimes\dotsm\otimes b_L)= b_1\otimes \dotsm \otimes
b_{j-1} \otimes f_i(b_j)\otimes \dots \otimes b_L
\end{equation*}
where the rightmost symbol $-$ in the reduced $i$-signature of
$b$ comes from $b_j$. Similarly, if $\varepsilon_i=0$ then
$e_i(b)=\zero$; otherwise
\begin{equation*}
e_i(b_1\otimes\dotsm\otimes b_L)= b_1\otimes \dotsm \otimes
b_{j-1} \otimes e_i(b_j)\otimes \dots \otimes b_L
\end{equation*}
where the leftmost symbol $+$ in the reduced $i$-signature of $b$
comes from $b_j$. It is not hard to verify that this defines
the structure of a $U_q(\mathfrak{g})$-crystal with
$\varphi_i(b)=\varphi_i$ and $\varepsilon_i(b)=\varepsilon_i$ in the above
notation, and weight function
\begin{equation*} 
\wt(b_1\otimes\dotsm\otimes b_L)=\sum_{j=1}^L \wt(b_j).
\end{equation*}

\subsection{Fundamental crystals for type $E_6$ and $E_7$}\label{ss:fund_cr}

Let $I = \{1, 2, 3, 4, 5, 6\}$ denote the classical index set for $E_6$.  We
number the nodes of the affine Dynkin diagram as in Figure~\ref{fig:E67}.  

\begin{figure}[h]
\begin{tabular}{ccc}
\xymatrix@-1.2pc{
                 &                  & \gn^0 \ar@{-}[d]  &                  & \\
                 &                  & \gn^2 \ar@{-}[d]  &                  & \\
\gn^1 \ar@{-}[r] & \gn^3 \ar@{-}[r] & \gn^4 \ar@{-}[r] & \gn^5 \ar@{-}[r] & \gn^6 \\
} & \ \ & 
\xymatrix@-1.2pc{
 & & & \ \ &  \\
 &                  &                  & \gn^2 \ar@{-}[d]  &                  & \\
\gn^0 \ar@{-}[r] & \gn^1 \ar@{-}[r] & \gn^3 \ar@{-}[r] & \gn^4 \ar@{-}[r] & \gn^5 \ar@{-}[r] & \gn^6 \ar@{-}[r] & \gn^7 \\
}
\end{tabular}
\caption{Affine $E_6^{(1)}$ and $E_7^{(1)}$ Dynkin diagrams}\label{fig:E67}
\end{figure}
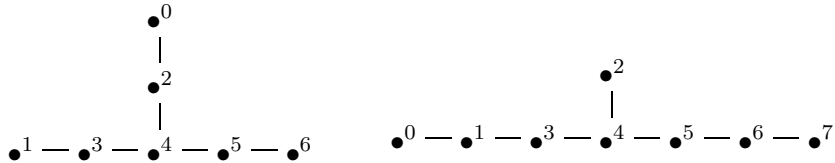

Classical highest-weight crystals $B(\lambda)$ for $E_6$ can 
be realized by the Lenart--Postnikov alcove path model described in \cite{lenart-postnikov}.
We implemented this model in \sage and have recorded the crystal $B(\Lambda_1)$
in Figure~\ref{fig:L1}.  This crystal has 27 nodes.  

To describe our labeling of the nodes, observe that all of the $i$-strings in
$B(\Lambda_1)$ have length 1 for each $i \in I$.  Therefore, the crystal admits
a transitive action of the Weyl group.  Also, it is straightforward to verify
that all of the nodes in $B(\Lambda_1)$ are determined by weight.  For our work
in Section~\ref{s:affine_structure}, we also compute the 0-weight at level 0 of
a node $b$ in any classical crystal from the classical weight as described in
Remark~\ref{r:reality}.

Thus, we label the nodes of $B(\Lambda_1)$ by weight, which is equivalent
to recording which $i$-arrows come in and out of $b$.  The $i$-arrows into $b$
are recorded with an overline to indicate that they contribute negative weight, while
the $i$-arrows out of $b$ contribute positive weight.

Using the Lenart--Postnikov alcove path model again, we can verify that
$B(\Lambda_6)$ also has 27 nodes and is dual to $B(\Lambda_1)$ in the sense
that its crystal graph is obtained from $B(\Lambda_1)$ by reversing all of the
arrows.  Reversing the arrows requires us to label the nodes of $B(\Lambda_6)$
by the weight that is the negative of the weight of the corresponding node in $B(\Lambda_1)$.
Moreover, observe that $B(\Lambda_1)$ contains no pair of nodes with weights
$\mu$, $-\mu$, respectively.  Hence, we can unambiguously label any node of
$B(\Lambda_1) \union B(\Lambda_6)$ by weight.

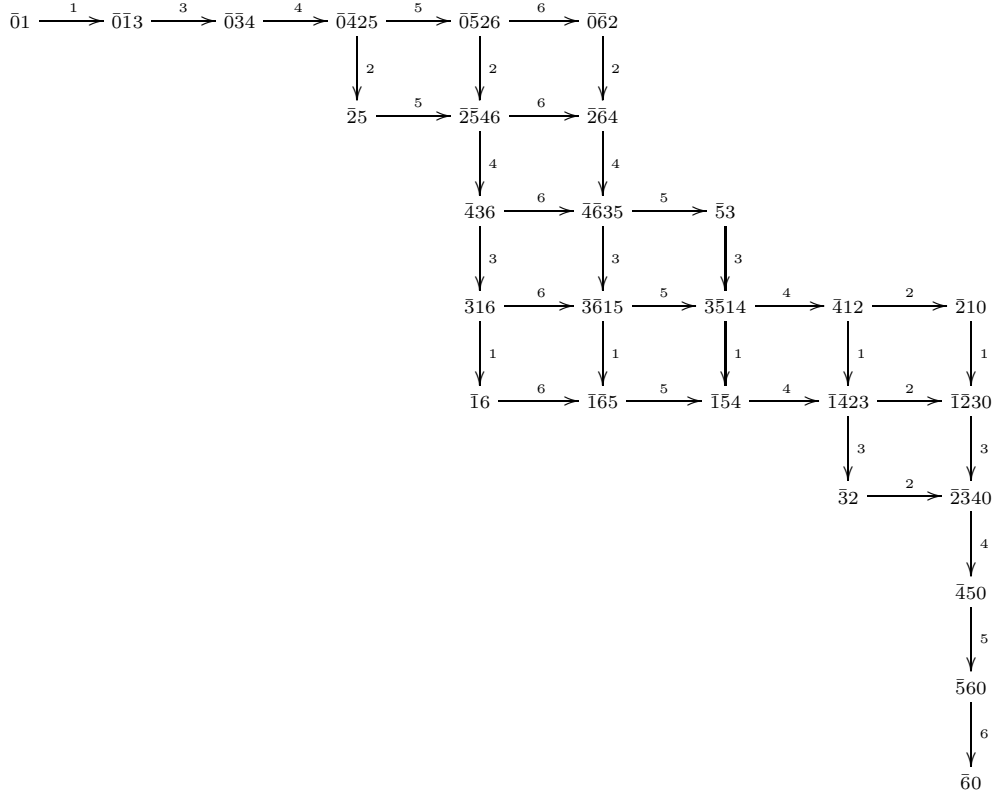
\begin{figure}[ht]
\begin{center}
\tiny{
\xymatrix{
\bar{0}1 \ar[r]^1 & \bar{0}\bar{1}3 \ar[r]^3 & \bar{0}\bar{3}4 \ar[r]^4 & \bar{0}\bar{4}25 \ar[r]^5 \ar[d]^2 & \bar{0}\bar{5}26 \ar[r]^6 \ar[d]^2 & \bar{0}\bar{6}2 \ar[d]^2 \\
                  &                          &                          & \bar{2}5 \ar[r]^5                  & \bar{2}\bar{5}46 \ar[r]^6 \ar[d]^4 & \bar{2}\bar{6}4 \ar[d]^4 \\
                  &                          &                          &                                    & \bar{4}36 \ar[r]^6 \ar[d]^3        & \bar{4}\bar{6}35 \ar[r]^5 \ar[d]^3 & \bar{5}3 \ar[d]^3  \\
                  &                          &                          &                                    & \bar{3}16 \ar[r]^6 \ar[d]^1        & \bar{3}\bar{6}15 \ar[r]^5 \ar[d]^1 & \bar{3}\bar{5}14 \ar[r]^4 \ar[d]^1 & \bar{4}12 \ar[r]^2 \ar[d]^1 & \bar{2}10 \ar[d]^1  \\
                  &                          &                          &                                    & \bar{1}6 \ar[r]^6                  & \bar{1}\bar{6}5 \ar[r]^5           & \bar{1}\bar{5}4 \ar[r]^4           & \bar{1}\bar{4}23 \ar[r]^2 \ar[d]^3 & \bar{1}\bar{2}30 \ar[d]^3  \\
                  &                          &                          &                                    &                                    &                                    &                                    & \bar{3}2 \ar[r]^2           & \bar{2}\bar{3}40 \ar[d]^4  \\
                  &                          &                          &                                    &                                    &                                    &                                    &                             & \bar{4}50 \ar[d]^5  \\
                  &                          &                          &                                    &                                    &                                    &                                    &                             & \bar{5}60 \ar[d]^6  \\
                  &                          &                          &                                    &                                    &                                    &                                    &                             & \bar{6}0  \\
} }
\end{center}
\caption{Crystal graph for $B(\Lambda_1)$ of type $E_6$ \label{fig:L1}}
\end{figure}

It is straightforward to show using characters that every classical
highest-weight representation $B(\Lambda_i)$ for $i \in I$ can be realized as a
component of some tensor product of $B(\Lambda_1)$ and $B(\Lambda_6)$ factors.
On the level of crystals, the tensor products $B(\Lambda_1)^{\otimes k}$,
$B(\Lambda_6)^{\otimes k}$ and $B(\Lambda_6) \otimes B(\Lambda_1)$ are defined
for all $k$ by the tensor product rule of Section~\ref{ss:tensor}.  
Therefore, we can realize the other classical fundamental
crystals $B(\Lambda_i)$ as shown in Table~\ref{t:fund_real_E6}.  There are
additional realizations for these crystals obtained by dualizing.

\begin{table}[ht]
\renewcommand{\arraystretch}{1.2}
\caption{Fundamental realizations for $E_6$}\label{t:fund_real_E6}
\begin{tabular}{|c|r|l|c|}
    \hline
                                & Generator                    & in                                                                    & Dimension\\
    \hline \hline
$B(\Lambda_2)$ & $2\bar{1}\bar{0} \otimes \bar{0}1$ & $B(\Lambda_6) \otimes B(\Lambda_1)$ & 78\\
    \hline
$B(\Lambda_3)$ & $\bar{0}\bar{1}3 \otimes \bar{0}1$ & $B(\Lambda_1)^{\otimes 2}$ & 351\\
    \hline
$B(\Lambda_4)$ & $\bar{0}\bar{3}4 \otimes \bar{0}\bar{1}3 \otimes \bar{0}1$ & $B(\Lambda_1)^{\otimes 3}$ & 2925\\
    \hline
$B(\Lambda_5)$ & $5\bar{6}\bar{0} \otimes 6\bar{0}$ & $B(\Lambda_6)^{\otimes 2}$ & 351\\
    \hline
\end{tabular}
\end{table}

\begin{figure}[hp]
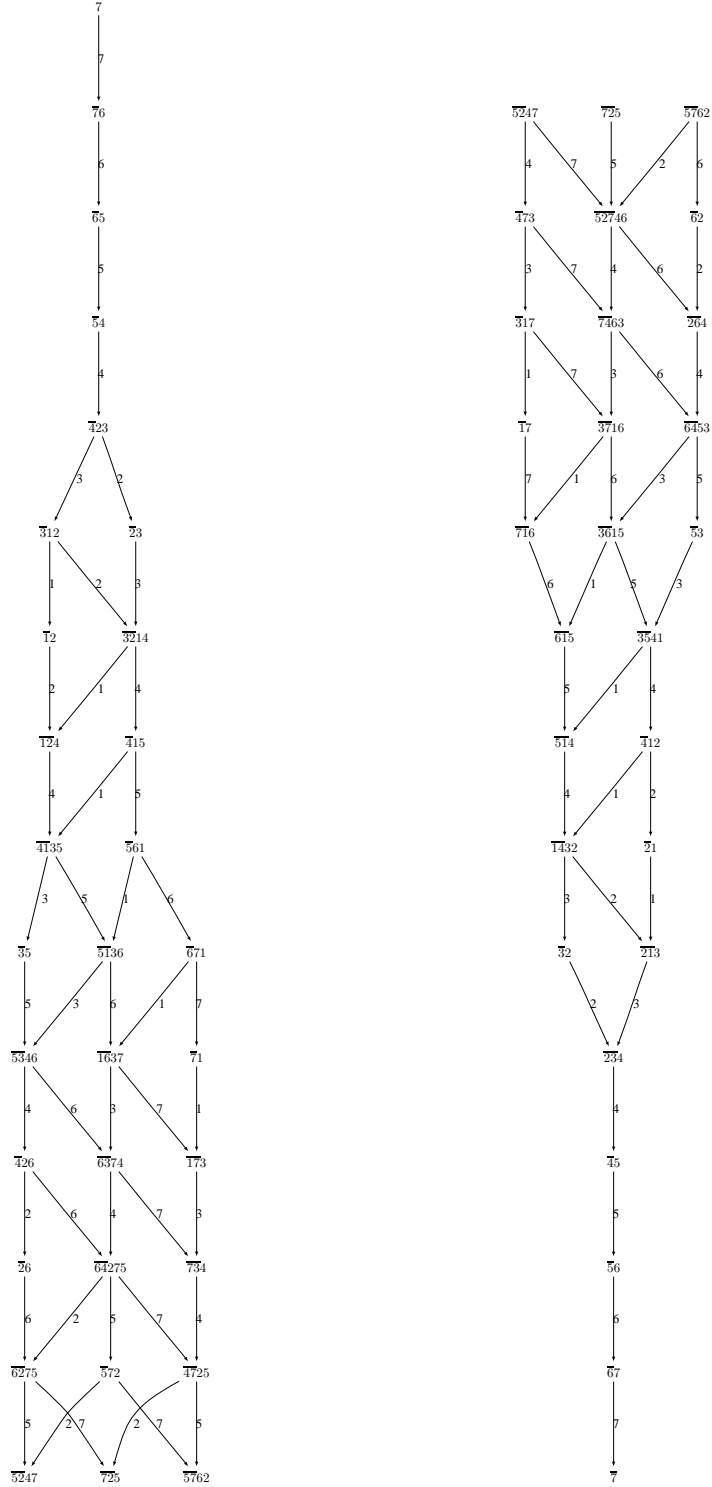

\include{typeE}
\caption{$B(\Lambda_7)$ of type $E_7$} \label{fig:BLa7}
\end{figure}

The Dynkin diagram of type $E_7^{(1)}$ is shown in Figure~\ref{fig:E67}.  The
highest weight crystal $B(\Lambda_7)$ has 56 nodes and these nodes all have
distinct weights (see Figure~\ref{fig:BLa7}).  Also, $\varphi_i(b) \leq 1$ and
$\varepsilon_i(b) \leq 1$ for all $i \in \{1, 2, \ldots, 7\}$ and $b \in
B(\Lambda_7)$.  Using character calculations, we can show that every classical
highest-weight representation $B(\Lambda_i)$ appears in some tensor product of
$B(\Lambda_7)$ factors.  In Table~\ref{t:fund_real_E7}, we display realizations
for all of the classical fundamental crystals $B(\Lambda_i)$ in type $E_7$.

Green~\cite{green:2007, green:2008} has another construction of the 27-dimensional
crystals $B(\Lambda_1)$ and $B(\Lambda_6)$ of type $E_6$, and the 56-dimensional
crystal $B(\Lambda_7)$ of type $E_7$ in terms of full heaps, and also gives the
connection of the fundamental $E_6$ crystals with the 27 lines on a cubic surface.
A Littlewood-Richardson rule for type $E_6$ was given in~\cite{hoshino:2007} using
polyhedral realizations of crystal bases.

\begin{table}[ht]
\renewcommand{\arraystretch}{1.2}
\caption{Fundamental realizations for $E_7$}\label{t:fund_real_E7}
\begin{tabular}{|c|r|l|c|}
    \hline
                                & Generator                    & in                                                & Dimension\\
    \hline \hline
$B(\Lambda_1)$ & $\bar{0}\bar{7}1 \otimes \bar{0}7$ & $B(\Lambda_7)^{\otimes 2}$ & 133\\
    \hline
$B(\Lambda_2)$ & $\bar{1}2 \otimes \bar{0}\bar{7}1 \otimes \bar{0}7$ & $B(\Lambda_7)^{\otimes 3}$ & 912 \\
    \hline
$B(\Lambda_3)$ & $\bar{0}\bar{2}3 \otimes \bar{1}2 \otimes \bar{0}\bar{7}1 \otimes \bar{0}7$ & $B(\Lambda_7)^{\otimes 4}$
   & 8645  \\
    \hline
$B(\Lambda_4)$ & $\bar{0}\bar{5}4 \otimes \bar{0}\bar{6}5 \otimes \bar{0}\bar{7}6 \otimes \bar{0}7$ & $B(\Lambda_7)^{\otimes 4}$
    & 365750 \\
    \hline
$B(\Lambda_5)$ & $\bar{0}\bar{6}5 \otimes \bar{0}\bar{7}6 \otimes \bar{0}7$ & $B(\Lambda_7)^{\otimes 3}$
    & 27664 \\
    \hline
$B(\Lambda_6)$ & $\bar{0}\bar{7}6 \otimes \bar{0}7$ & $B(\Lambda_7)^{\otimes 2}$
    & 1539 \\
    \hline
$B(\Lambda_7)$ & $\bar{0}7$ & $B(\Lambda_7)$
    & 56 \\
    \hline

\end{tabular}
\end{table}


\bigskip
\subsection{Generalized tableaux} \label{ss:gen_tab}

In this section, we describe how to realize the crystal $B(\Lambda_{i_1} +
\Lambda_{i_2} + \cdots + \Lambda_{i_k})$ inside the tensor product
$B(\Lambda_{i_1}) \otimes B(\Lambda_{i_2}) \otimes \cdots \otimes
B(\Lambda_{i_k})$, where the $\Lambda_i$ are all fundamental, or more generally
dominant weights.  Our arguments use only abstract crystal properties, so the
results in this section apply to any finite type.

If $b$ is the unique highest weight node in $B(\lambda)$ and $c$ is the unique
highest weight node in $B(\mu)$, then $B(\lambda + \mu)$ is generated by $b
\otimes c \in B(\lambda) \otimes B(\mu)$.  Iterating this procedure provides a
recursive description of any highest-weight crystal embedded in a tensor product
of crystals.  Our goal is to give a non-recursive description of the nodes of
$B( \Lambda_{i_1} + \Lambda_{i_2} + \cdots + \Lambda_{i_k} )$ for any collection
of fundamental weights $\Lambda_i$.

For an ordered set of dominant weights $(\mu_1, \mu_2, \ldots, \mu_k)$ and
for each permutation $w$ in the symmetric group $S_k$, define
\[ B_w(\mu_1, \ldots, \mu_k) = B(\mu_{w(1)}) \otimes B(\mu_{w(2)}) \otimes \cdots \otimes B(\mu_{w(k)}) \]
so $B_e(\mu_1, \ldots, \mu_k)$ is $B(\mu_1) \otimes \cdots \otimes B(\mu_k)$ where $e \in S_k$ is the
identity.  

\begin{definition}\label{d:pwi}
Let $(\mu_1, \mu_2, \ldots, \mu_k)$ be dominant weights.  Then, we say that 
\[ b_1 \otimes b_2 \otimes \cdots \otimes b_k \in B(\mu_{1}) \otimes B(\mu_{2}) \otimes \cdots \otimes B(\mu_k) \]
is \em pairwise weakly increasing \em if 
\[ b_j \otimes b_{j+1} \in B(\mu_j + \mu_{j+1}) \subset B(\mu_j) \otimes B(\mu_{j+1}) \]
for each $1 \leq j < k$.
\end{definition}

Next, we fix an isomorphism of crystals
\[ \Phi_{w}^{(\mu_1, \ldots, \mu_k)} : B_w(\mu_1, \ldots, \mu_k) \rightarrow B_e(\mu_1, \ldots, \mu_k) \]
for each $w \in S_k$.  Observe that each choice of $\Phi_w^{(\mu_1, \ldots,
\mu_k)}$ corresponds to a choice for the image of each of the highest-weight
nodes in $B_w(\mu_1, \ldots, \mu_k)$.  

Let $b_j^{\ast}$ denote the unique highest weight node of the $j$th factor
$B(\mu_j)$.  Since we are fixing the dominant weights $(\mu_1, \ldots,
\mu_k)$, we will sometimes drop the notation $(\mu_1, \ldots, \mu_k)$ from $B_w$
and $\Phi_w$ in the proofs below.

\begin{definition}
Let $w$ be a permutation that fixes $\{1, 2, \ldots, j\}$.  We say that
$\Phi_{w}^{(\mu_1, \ldots, \mu_k)}$ is a \em lazy isomorphism \em if the image
of every highest weight node of the form 
\[
b_1 \otimes b_2 \otimes \cdots \otimes b_j \otimes b_{j+1}^{\ast} \otimes \cdots
\otimes b_{k}^{\ast}
\]
under $\Phi_{w}^{(\mu_1, \ldots, \mu_k)}$ is equal to
\[
b_1 \otimes b_2 \otimes \cdots \otimes b_j \otimes b_{w^{-1}(j+1)}^{\ast} \otimes \cdots
\otimes b_{w^{-1}(k)}^{\ast}\; .
\]
\end{definition}

We want to choose our isomorphisms $\Phi_{w}^{(\mu_1, \ldots, \mu_k)}$ to be
lazy, but we will see in the course of the proofs that our results do not
otherwise depend upon the choice of $\Phi_w^{(\mu_1, \ldots, \mu_k)}$.  

\begin{definition} \label{d:weak_inc}
Let $T$ be any subset of $S_k$, and $\{\Phi_{w}^{(\mu_1, \ldots, \mu_k)}\}_{w
\in T}$ be a collection of lazy isomorphisms.  We define $I^{(\mu_1, \ldots,
\mu_k)}(T)$ to be
\[ \bigcap_{w \in T} \Phi_w^{(\mu_1, \ldots,
\mu_k)} ( \{ \text{pairwise weakly increasing nodes of $B_w(\mu_1, \ldots,
\mu_k)$ } \} ) \subset B_e(\mu_1, \ldots, \mu_k). \]
\end{definition}

\begin{proposition}\label{p:wi_closed}
Let $T$ be any subset of $S_k$.  Then, whenever $b \in I^{(\mu_1, \ldots,
\mu_k)}(T)$ we have $e_i(b), f_i(b) \in I^{(\mu_1, \ldots, \mu_k)}(T)$.
\end{proposition}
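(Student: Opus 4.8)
The plan is to peel off the structure of $I^{(\mu_1, \ldots, \mu_k)}(T)$ one layer at a time, reducing the assertion to a statement about a single tensor product that can be settled using only the associativity of the tensor product rule together with the fact that connected components of a crystal are preserved by $e_i$ and $f_i$.

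First I would observe that it suffices to prove the claim for a single permutation $w$, namely that each set $\Phi_w^{(\mu_1, \ldots, \mu_k)}(\{\text{pairwise weakly increasing nodes of } B_w\})$ is closed under $e_i$ and $f_i$. Indeed, $I^{(\mu_1, \ldots, \mu_k)}(T)$ is by Definition~\ref{d:weak_inc} the intersection of these sets over $w \in T$, and an intersection of sets each closed under $e_i, f_i$ is again closed under $e_i, f_i$ (if $b$ lies in every such set, so does $f_i(b)$, provided $f_i(b) \neq \zero$, and likewise for $e_i$). Next, since $\Phi_w$ is an isomorphism of crystals it commutes with $e_i$ and $f_i$ and sends $\zero$ to $\zero$; hence $\Phi_w(S)$ is closed under $e_i, f_i$ if and only if $S$ is. This reduces everything to the core claim: the set of pairwise weakly increasing nodes of $B_w = B(\nu_1) \otimes \cdots \otimes B(\nu_k)$, where $\nu_j := \mu_{w(j)}$, is closed under $e_i$ and $f_i$.

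To prove the core claim, let $b = b_1 \otimes \cdots \otimes b_k$ be pairwise weakly increasing and suppose $f_i(b) \neq \zero$ (the argument for $e_i$ is identical). By the signature rule of Section~\ref{ss:tensor}, $f_i$ changes exactly one tensor factor, say $b_m \mapsto f_i(b_m)$, leaving the others fixed. I must check that each consecutive pair of $f_i(b)$ still lies in the component $B(\nu_j + \nu_{j+1}) \subset B(\nu_j) \otimes B(\nu_{j+1})$. For a pair $(j, j+1)$ with $m \notin \{j, j+1\}$ there is nothing to do, since $b_j \otimes b_{j+1}$ is unchanged; the only content is the case $m \in \{j, j+1\}$. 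Here I would invoke associativity of the crystal tensor product: grouping the factors as $L \otimes M \otimes R$ with $L = b_1 \otimes \cdots \otimes b_{j-1}$, $M = b_j \otimes b_{j+1}$, and $R = b_{j+2} \otimes \cdots \otimes b_k$, the action of $f_i$ is independent of this bracketing. Since $f_i$ alters $b_m$ with $m \in \{j, j+1\}$, it must act on the middle factor $M$, and its action there is computed by the signature rule of the two-factor crystal $M = B(\nu_j) \otimes B(\nu_{j+1})$ alone. Thus $f_i(b_j \otimes b_{j+1})$ computed inside the full product agrees with the stand-alone action of $f_i$ on the pair. Because $b_j \otimes b_{j+1} \in B(\nu_j + \nu_{j+1})$ and this highest weight crystal is a single connected component of $B(\nu_j) \otimes B(\nu_{j+1})$, which is preserved by $f_i$ (any $i$-arrow stays within a connected component), the new pair again lies in $B(\nu_j + \nu_{j+1})$. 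As this holds for every consecutive pair, $f_i(b)$ is pairwise weakly increasing, which establishes the core claim and hence the proposition.

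I expect the main obstacle to be the associativity step: one must ensure that the factor on which $f_i$ acts in the isolated pair $b_j \otimes b_{j+1}$ is the same factor it acts on within the full tensor product. This is precisely what associativity of the signature rule guarantees, but it is the one place where the global reduction of the $i$-signature could a priori interfere with the local two-factor computation, so it is where I would be most careful. Everything else is a formal consequence of $\Phi_w$ being a crystal isomorphism and of intersections preserving closure under $e_i, f_i$.
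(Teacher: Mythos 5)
Your proof is correct and follows the same overall strategy as the paper's: reduce, via the fact that each $\Phi_w$ is a crystal isomorphism and that intersections of closed sets are closed, to showing that the set of pairwise weakly increasing nodes of a single tensor product is stable under $e_i$ and $f_i$. The only difference lies in how that core step is justified: you localize the action of $f_i$ to the pair $b_j\otimes b_{j+1}$ by appealing to associativity of the tensor product (grouping the factors as $L\otimes M\otimes R$), whereas the paper argues directly with the signature rule, checking that every $+$ contributed by $b_{j-1}$ is bracketed by a $-$ from $b_j$ and that not every $+$ of $b_j$ is bracketed by a $-$ from $b_{j+1}$. Both routes establish the same key fact --- that the new consecutive pair is obtained from the old one by applying the crystal operator inside $B(\mu_j)\otimes B(\mu_{j+1})$, hence stays in the component $B(\mu_j+\mu_{j+1})$ --- so the arguments are essentially equivalent, with yours packaging the signature bookkeeping into the standard associativity property.
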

\begin{proof}
We first claim that the crystal operators $e_i$ and $f_i$ preserve the pairwise
weakly increasing condition in any tensor product of highest weight crystals.
Let 
\[ b = b_1 \otimes b_2 \otimes \cdots \otimes b_k \]
be a pairwise weakly increasing node in $B = B(\mu_{1}) \otimes \cdots \otimes
B(\mu_k)$.

We need to show that $e_i(b)$ is pairwise weakly increasing. Suppose that $e_i$
acts on the $j$-th tensor factor in $b$, that is, $e_i(b) = b_1 \otimes \cdots
\otimes e_i(b_j) \otimes \cdots \otimes b_k$. Hence it suffices to show that
$b_{j-1} \otimes e_i(b_j)\in B(\mu_{j-1}+\mu_j)$ and $e_i(b_j) \otimes
b_{j+1}\in B(\mu_j + \mu_{j+1})$.  Since $e_i$ acts on $b_j$ in $b$, in the
tensor product rule the leftmost unbracketed $+$ is associated to $b_j$.  This
means that any $+$ from $b_{j-1}$ must be bracketed with a $-$ from $b_j$.  But
then $e_i(b_{j-1}\otimes b_j) =b_{j-1} \otimes e_i(b_j)\in B(\mu_{j-1}+\mu_j)$.
Similarly, since $e_i$ acts on $b_j$, not all $+$ in $b_j$ are bracketed with
$-$ in $b_{j+1}\otimes \cdots \otimes b_k$. But therefore, also not all $+$ in
$b_j$ are bracketed with $-$ in $b_{j+1}$ and hence $e_i(b_j\otimes b_{j+1}) =
e_i(b_j) \otimes b_{j+1} \in B(\mu_j+\mu_{j+1})$.  The arguments for $f_i$ are
analogous.

Next, suppose that $b \in I^{(\mu_1, \ldots, \mu_k)}(T) \subset B_e$.  Then, for
all $w \in S_k$ we have $\Phi_w^{-1}(b)$ is pairwise weakly increasing in $B_w$.
By the argument above, we then have that $e_i( \Phi_w^{-1}(b) )$ is pairwise
weakly increasing in $B_w$.  Since $\Phi_w$ is an isomorphism, it commutes with
$e_i$, so $\Phi_w^{-1}( e_i (b) )$ is pairwise weakly increasing in $B_w$ for
all $w \in S_k$.  Hence, $e_i(b) \in I^{(\mu_1, \ldots, \mu_k)}(T)$.  The
arguments for $f_i$ are analogous.
\end{proof}

\begin{corollary}
For any subset $T$ of $S_k$, we have that $I^{(\mu_1, \ldots, \mu_k)}(T)$ is a direct sum of highest
weight crystals $\bigoplus_{\lambda} B(\lambda)$ for some collection of weights $\lambda$.
\end{corollary}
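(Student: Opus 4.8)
The plan is to deduce this directly from Proposition~\ref{p:wi_closed} together with the general decomposition theory for crystals of integrable modules recalled in Section~\ref{ss:axiom}. The key point is that closure of $I^{(\mu_1, \ldots, \mu_k)}(T)$ under the crystal operators $e_i$ and $f_i$ forces it to be a union of connected components of the ambient crystal graph, and that each such component is a highest weight crystal.

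First I would observe that $B_e(\mu_1, \ldots, \mu_k) = B(\mu_1) \otimes \cdots \otimes B(\mu_k)$ is a tensor product of highest weight crystals, each of which corresponds to an integrable $U_q(\mathfrak{g})$-module. Since a tensor product of integrable modules is again integrable, the general theory quoted in Section~\ref{ss:axiom} applies: the crystal graph of $B_e$ is a union of connected components, and each such component is the crystal graph of a highest weight module $B(\lambda)$.

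Next I would argue that $I^{(\mu_1, \ldots, \mu_k)}(T)$ is itself a union of some of these connected components. Suppose $b \in I^{(\mu_1, \ldots, \mu_k)}(T)$ and $b'$ lies in the same connected component of the crystal graph of $B_e$. Then $b'$ is obtained from $b$ by a finite sequence of applications of the operators $e_i$ and $f_i$, since the edges of the crystal graph are precisely the $f_i$-arrows and connectedness is taken in the underlying undirected graph. Applying Proposition~\ref{p:wi_closed} at each step (and noting that we never encounter $\zero$ while travelling within a fixed component) shows $b' \in I^{(\mu_1, \ldots, \mu_k)}(T)$. Hence the whole component containing $b$ lies in $I^{(\mu_1, \ldots, \mu_k)}(T)$, so $I^{(\mu_1, \ldots, \mu_k)}(T)$ is exactly a union of connected components of $B_e$.

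Combining these two observations, $I^{(\mu_1, \ldots, \mu_k)}(T)$ is a union of connected components of $B_e$, each of which is a highest weight crystal $B(\lambda)$, giving the desired decomposition $\bigoplus_{\lambda} B(\lambda)$. I do not anticipate a genuine obstacle here, as the corollary is a formal consequence once Proposition~\ref{p:wi_closed} is in hand; the only point deserving a line of care is the reachability step, namely that two nodes in one component are joined by a path of $e_i/f_i$ edges, but this is immediate from the definition of the crystal graph as an edge-colored directed graph whose edges are the $f_i$-arrows.
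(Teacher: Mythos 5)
Your proof is correct and follows the same route as the paper: you invoke Proposition~\ref{p:wi_closed} to conclude that $I^{(\mu_1, \ldots, \mu_k)}(T)$ is closed under $e_i$ and $f_i$, hence a union of connected components of $B_e(\mu_1, \ldots, \mu_k)$, and then appeal to the general decomposition of integrable crystals into highest weight components. The paper's proof is a one-line version of exactly this argument, so your additional care about reachability within a component is fine but not a new idea.
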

\begin{proof}
Proposition~\ref{p:wi_closed} implies that whenever $b \in I^{(\mu_1, \ldots,
\mu_k)}(T)$, the entire connected component of the crystal graph containing $b$
is in $I^{(\mu_1, \ldots, \mu_k)}(T)$.
\end{proof}

\begin{theorem}\label{t:tableaux}
Fix a sequence $(\mu_1, \ldots, \mu_k)$ of dominant weights.  Then,
\[ I^{(\mu_1, \ldots, \mu_k)}(S_k) \cong B(\mu_1 + \mu_2 + \ldots + \mu_k). \]
\end{theorem}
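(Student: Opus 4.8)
The plan is to prove the two inclusions separately, using the corollary to Proposition~\ref{p:wi_closed} to reduce everything to a statement about highest weight nodes.

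For the inclusion $B(\mu_1+\cdots+\mu_k)\subseteq I^{(\mu_1,\ldots,\mu_k)}(S_k)$, I would first observe that $b^\ast=b_1^\ast\otimes\cdots\otimes b_k^\ast$ lies in $I^{(\mu_1,\ldots,\mu_k)}(S_k)$. Indeed, $\mu_1+\cdots+\mu_k$ is the unique maximal weight of $B_e(\mu_1,\ldots,\mu_k)$ and occurs with multiplicity one, so every crystal isomorphism $\Phi_w^{-1}$ must send $b^\ast$ to the unique highest weight node $b_{w(1)}^\ast\otimes\cdots\otimes b_{w(k)}^\ast$ of $B_w(\mu_1,\ldots,\mu_k)$; each consecutive pair there is the generating highest weight node of the corresponding two-factor component, hence the node is pairwise weakly increasing. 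Since $b^\ast$ is the highest weight node of $B(\mu_1+\cdots+\mu_k)$, Proposition~\ref{p:wi_closed} shows that the whole component, being the $f_i$-orbit of $b^\ast$, is contained in $I^{(\mu_1,\ldots,\mu_k)}(S_k)$.

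For the reverse inclusion it suffices, by the corollary to Proposition~\ref{p:wi_closed}, to show that $b^\ast$ is the \emph{only} highest weight node of $I^{(\mu_1,\ldots,\mu_k)}(S_k)$. I proceed by induction on $k$, the cases $k=1,2$ being immediate (for $k=2$ the condition at $w=e$ already forces membership in $B(\mu_1+\mu_2)$). Let $b=b_1\otimes\cdots\otimes b_k$ be a highest weight node of $I^{(\mu_1,\ldots,\mu_k)}(S_k)$. For the permutations $w$ fixing the index $1$ I take $\Phi_w=\mathrm{id}_{B(\mu_1)}\otimes\Psi_w$, where the $\Psi_w$ are lazy isomorphisms for the shorter sequence $(\mu_2,\ldots,\mu_k)$; this is a legitimate lazy choice. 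The pairwise weakly increasing condition on $\Phi_w^{-1}(b)=b_1\otimes\Psi_w^{-1}(b_2\otimes\cdots\otimes b_k)$ then splits into a condition on the tail $\Psi_w^{-1}(b_2\otimes\cdots\otimes b_k)$ and a condition on the first pair. Taken over all such $w$, the tail conditions say exactly that $b_2\otimes\cdots\otimes b_k\in I^{(\mu_2,\ldots,\mu_k)}(S_{k-1})$, which by induction equals $B(\mu_2+\cdots+\mu_k)$.

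Writing $\nu=\mu_2+\cdots+\mu_k$ and $v=b_2\otimes\cdots\otimes b_k\in B(\nu)$, the tensor product rule in our convention gives $\varepsilon_i(b_1\otimes v)=\varepsilon_i(v)+\max(\varepsilon_i(b_1)-\varphi_i(v),0)\ge\varepsilon_i(v)$ for every $i$; since $b=b_1\otimes v$ is highest weight, $\varepsilon_i(v)=0$ for all $i$, so $v$ is the unique highest weight node of $B(\nu)$, i.e.\ $b_j=b_j^\ast$ for $2\le j\le k$. It remains to show $b_1=b_1^\ast$, and I expect this to be the main obstacle. The conditions extracted so far only compare $b_1$ with the highest weight nodes $b_j^\ast$ — the first-pair conditions give $b_1\otimes b_j^\ast\in B(\mu_1+\mu_j)$ for each $j$ — and together with the highest weight constraint $\varepsilon_i(b_1)\le\langle\alpha_i^\vee,\nu\rangle$ these must be shown to force $b_1=b_1^\ast$. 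The difficulty is that the permutations which do \emph{not} fix the index $1$ cannot be evaluated on $b$ directly through laziness; to access them I would exploit the symmetry of the construction under change of the base order, reapplying the peeling argument in the reordered crystal obtained by moving the first factor to the end, so that $b_1$ becomes pinned as part of a highest weight tail. Making this base-change step rigorous — equivalently, establishing that $I^{(\mu_1,\ldots,\mu_k)}(S_k)$ is genuinely independent of the chosen lazy isomorphisms and equivariant under reordering — is the crux on which the whole argument turns.
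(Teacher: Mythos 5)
Your first inclusion and your inductive peeling step are sound: the choice $\Phi_w = \mathrm{id}\otimes\Psi_w$ for $w$ fixing the first index is a legitimate lazy family, the tail $b_2\otimes\cdots\otimes b_k$ is forced to be the highest weight node of $B(\mu_2+\cdots+\mu_k)$, and you correctly extract the conditions $b_1\otimes b_j^{\ast}\in B(\mu_1+\mu_j)$ for all $j$. But the step you flag as the crux is a genuine gap, and the route you propose for it does not close. The base-change/reordering idea requires evaluating $\Phi_w^{-1}(b)$ for permutations $w$ that move the first index on the node $b=b_1\otimes b_2^{\ast}\otimes\cdots\otimes b_k^{\ast}$; laziness only pins down such $\Phi_w$ on the full highest weight node $b_1^{\ast}\otimes\cdots\otimes b_k^{\ast}$, and since $\wt(b)$ may occur with multiplicity among highest weight nodes of the tensor product, $\Phi_w^{-1}(b)$ is genuinely undetermined there. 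So "reapplying the peeling argument in the reordered crystal" cannot be made rigorous from the data you have.

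The point you are missing is that the conditions you have already extracted \emph{do} suffice, via the following argument (which is how the paper finishes). Suppose $b_1\neq b_1^{\ast}$, so $\varepsilon_i(b_1)>0$ for some $i$. Since $b$ is highest weight, every $+$ contributed by $b_1$ to the $i$-signature is canceled by a $-$ from some factor $b_{j'}^{\ast}$ to its right; in particular $\varphi_i(b_{j'}^{\ast})>0$. You know $b_1\otimes b_{j'}^{\ast}\in B(\mu_1+\mu_{j'})$, so some sequence of raising operators $e_{i'}$ must carry $b_1\otimes b_{j'}^{\ast}$ to the highest weight node $b_1^{\ast}\otimes b_{j'}^{\ast}$, and each such $e_{i'}$ necessarily acts on the first factor because the second is already highest weight. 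Throughout this process $\varphi_i$ of the second factor stays positive, and by Stembridge's axiom (P4) the applications of $e_{i'}$ with $i'\neq i$ preserve positivity of $\varepsilon_i$ of the first factor, while $e_i$ can be applied to the pair at most $\max\{\varepsilon_i(b_1)-\varphi_i(b_{j'}^{\ast}),0\}$ times, so $\varepsilon_i$ of the first factor remains positive forever. Hence $b_1^{\ast}$ is never reached, a contradiction. (One further small item worth recording: the theorem should hold for an arbitrary family of lazy isomorphisms, whereas your induction fixes a particular compatible family; the argument just sketched uses only the pairwise conditions and so is insensitive to that choice.)
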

\begin{proof}
Let $b_j^{\ast}$ be the unique highest weight node of $B_j$ with highest weight
$\mu_j$ for each $j = 1, \ldots, k$.  Then $b^{\ast} = b_1^{\ast} \otimes
b_2^{\ast} \otimes \cdots \otimes b_k^{\ast}$ generates $B(\mu_1 + \ldots +
\mu_k)$ and this node lies in $I^{(\mu_1, \ldots, \mu_k)}(S_k)$.

Suppose there exists another highest weight node in $I^{(\mu_1, \ldots,
\mu_k)}(S_k)$.  Then, at least one of the factors $b_j$ must have
$\varepsilon_i(b_j) > 0$ for some $i$.  Choose $j$ to be the rightmost factor
having $\varepsilon_i(b_j) > 0$ for some $i \in I$.  Then fix some choice of $i$
such that $\varepsilon_i(b_j) > 0$.  Our highest weight node has the form
\[ b = b_1 \otimes \cdots \otimes b_j \otimes b_{j+1}^{\ast} \otimes \cdots \otimes
b_{k}^{\ast}. \]
In particular, $j < k$ since any rightmost factor of a highest weight tensor
product must be highest weight.

Since $b$ is highest weight, we have that all $+$ entries for factor $b_j$ are
canceled by $-$ entries lying to the right in the $i$-signature for the tensor
product rule.  Suppose that $b_{j'}$ is the leftmost factor for which a $-$
cancels a $+$ from $b_j$ in the $i$-signature.  Let $w$ be the permutation that
interchanges factors $j+1$ and $j'$.  Then, by our choice of $\Phi_w$ we have
that $\Phi_w^{-1}(b)$ is obtained from $b$ just by interchanging the factors
$b_{j+1}^{\ast}$ and $b_{j'}^{\ast}$.

Hence, we have that $\Phi_w^{-1}(b)$ in $B_w$ has an adjacent $+$/$-$ pair on
factors $j, j+1$.  Since this pair is part of a pairwise weakly increasing
element, there must exist a sequence of $e_{i'}$ operations that brings $b_j
\otimes b_{j'}^{\ast}$ to $b_j^{\ast} \otimes b_{j'}^{\ast}$.  However, $e_{i'}$
can only operate on the first tensor factor in this pair because $b_{j'}^{\ast}$
is already highest weight.  Moreover, we have that $\varepsilon_i$ of the
first factor and $\varphi_i$ of the second factor are both positive.  This
remains true regardless of how we apply $e_{i'}$ operations where $i \neq i'$ by
\cite[Axiom (P4)]{stembridge:2003}.  We can potentially apply the $e_i$
operation $\max\{\varepsilon_i(b_j) - \varphi_i(b_{j'}^{\ast}),0\}$ times, but
since $\varphi_i(b_{j'}^{\ast}) > 0$, we have that $\varepsilon_i$ of the
first factor will always remain positive.  Hence, we can never reach $b_j^{\ast}
\otimes b_{j'}^{\ast}$, a contradiction.  

Thus, $b^{\ast}$ is the unique highest weight node of $I^{(\mu_1, \ldots, \mu_k)}(S_k)$.
\end{proof}

\begin{remark}
The condition that there is a unique highest weight element that we used in the
proof of Theorem~\ref{t:tableaux} is equivalent to the hypothesis of
\cite[Proposition 2.2.1]{KN} from which the desired conclusion also follows. 
\end{remark}

\begin{remark}
Observe that only a finite constant amount of data is ever required to check the
pairwise weakly increasing condition, regardless of how large the number of
tensor factors $k$ is.  Theorem~\ref{t:tableaux} and its refinements will allow
us to formulate arguments that apply to all highest-weight crystals
simultaneously.

When we are considering a specific highest-weight crystal, it may be
computationally easier to generate $B(\mu_1 + \cdots + \mu_k)$ by simply
applying $f_i$ operations to the highest-weight node in all possible ways.
\end{remark}

We will say that any node of $I^{(\mu_1, \ldots, \mu_k)}(S_k)$ is \em weakly
increasing\em.  It turns out that we can often take $T$ to be much smaller than
$S_n$ by starting with $T = \{e\}$ and adding permutations to $T$ until
$I^{(\mu_1,\ldots,\mu_k)}(T)$ contains a unique highest weight node.  In
particular, the next result shows that we can take $T = \{e\}$ when we are
considering a linear combination of two distinct fundamental weights.

\begin{lemma}\label{l:weakly_increasing}
Let $\Lambda_{i_1}$ and $\Lambda_{i_2}$ be distinct fundamental weights, and
$k_1, k_2 \in \mathbb{Z}_{\geq 0}$ with $k = k_1 + k_2$.
Then, the nodes of
\[ B(k_1 \Lambda_{i_1} + k_2 \Lambda_{i_2}) \subset B(\Lambda_{i_1})^{\otimes
k_1} \otimes B(\Lambda_{i_2})^{\otimes k_2} \]
are precisely the pairwise weakly increasing tensor products $b_1 \otimes b_2
\otimes \cdots \otimes b_k$ of $B(\Lambda_{i_1})^{\otimes k_1} \otimes
B(\Lambda_{i_2})^{\otimes k_2}$.
\end{lemma}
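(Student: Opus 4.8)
The plan is to deduce the statement from Theorem~\ref{t:tableaux} by showing that for two \emph{distinct} fundamental weights one may already take $T = \{e\}$. Write $\mu_1 = \cdots = \mu_{k_1} = \Lambda_{i_1}$ and $\mu_{k_1+1} = \cdots = \mu_k = \Lambda_{i_2}$, let $b_j^{\ast}$ be the highest weight node of $B(\mu_j)$, and set $b^{\ast} = b_1^{\ast} \otimes \cdots \otimes b_k^{\ast}$. Since $\Phi_e$ is the identity, $I^{(\mu_1,\ldots,\mu_k)}(\{e\})$ is exactly the set of pairwise weakly increasing nodes of $B_e$, and one checks directly that $b^{\ast}$ is pairwise weakly increasing (each adjacent pair is the highest weight generator of the relevant $B(\mu_j+\mu_{j+1})$) and highest weight. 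By the Corollary following Proposition~\ref{p:wi_closed}, $I(\{e\})$ is a direct sum of highest weight crystals, and it contains the connected component of $b^{\ast}$, namely $B(k_1\Lambda_{i_1}+k_2\Lambda_{i_2})$. Hence it suffices to prove that $b^{\ast}$ is the \emph{unique} highest weight node among the pairwise weakly increasing nodes; then $I(\{e\})$ is connected and equals that component, which is the assertion of the lemma.

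So I would suppose for contradiction that $b = b_1 \otimes \cdots \otimes b_k$ is pairwise weakly increasing and highest weight with $b \neq b^{\ast}$, let $j$ be the rightmost index with $b_j \neq b_j^{\ast}$ (equivalently $\varepsilon_i(b_j) > 0$ for some $i$), so that $b_{j+1}, \ldots, b_k$ are all highest weight and $j < k$, and then split according to whether $\mu_j = \mu_{j+1}$. The key auxiliary fact, used when $\mu_j = \mu_{j+1} = \Lambda = \Lambda_p$, is: if $c \otimes b_\Lambda^{\ast} \in B(2\Lambda)$ then $c = b_\Lambda^{\ast}$. To prove it I would climb from $c \otimes b_\Lambda^{\ast}$ to the source $b_\Lambda^{\ast} \otimes b_\Lambda^{\ast}$ by raising operators; since the second factor is highest weight it carries no $+$ in any signature, so every $e_i$ acts on the first factor and the second factor remains $b_\Lambda^{\ast}$ at every step, in particular at the penultimate node. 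But $\varphi_i(b_\Lambda^{\ast}) = \delta_{i,p}$, so the only nonzero $f_i$ out of the source is $f_p$, and $f_p(b_\Lambda^{\ast}\otimes b_\Lambda^{\ast}) = b_\Lambda^{\ast}\otimes f_p(b_\Lambda^{\ast})$ has second factor $f_p(b_\Lambda^{\ast}) \neq b_\Lambda^{\ast}$. Thus the unique lower neighbor of the source has a non-highest-weight second factor, contradicting that the penultimate node of the climb has second factor $b_\Lambda^{\ast}$, unless no climb is needed, i.e. $c = b_\Lambda^{\ast}$. Applying this with $c = b_j$ and the pairwise weakly increasing condition $b_j \otimes b_{j+1}^{\ast} \in B(2\Lambda)$ gives $b_j = b_j^{\ast}$, a contradiction, disposing of the case $\mu_j = \mu_{j+1}$.

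The remaining boundary case $\mu_j \neq \mu_{j+1}$ forces $j = k_1$, with $b_{k_1+1} = \cdots = b_k = b_{i_2}^{\ast}$ and $b_{k_1} \in B(\Lambda_{i_1})$, $b_{k_1} \neq b_{i_1}^{\ast}$. This is the main obstacle: a naive application of the argument of Theorem~\ref{t:tableaux} would require permuting $b_{k_1}$ past the rest of the $\Lambda_{i_1}$-block to meet a $\Lambda_{i_2}$-factor, i.e. a nontrivial element of $T$. I would remove the obstacle using the highest weight hypothesis: for any color $i \neq i_2$, the factors to the right of $b_{k_1}$ are all $b_{i_2}^{\ast}$ with $\varphi_i(b_{i_2}^{\ast}) = \delta_{i,i_2} = 0$, so they contribute no $-$ to the $i$-signature and the $\varepsilon_i(b_{k_1})$ many $+$-signs of $b_{k_1}$ cannot be cancelled on the right; since $b$ is highest weight this forces $\varepsilon_i(b_{k_1}) = 0$ for all $i \neq i_2$, and hence $\varepsilon_{i_2}(b_{k_1}) > 0$.

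Consequently the leftmost factor whose $-$ cancels a $+$ of $b_{k_1}$ in color $i_2$ is the adjacent factor $b_{j+1} = b_{i_2}^{\ast}$ itself, so \emph{no permutation is needed} and I land exactly in the configuration handled inside the proof of Theorem~\ref{t:tableaux}: the pairwise weakly increasing pair $b_{k_1} \otimes b_{i_2}^{\ast} \in B(\Lambda_{i_1} + \Lambda_{i_2})$ cannot be raised to its highest weight node $b_{i_1}^{\ast} \otimes b_{i_2}^{\ast}$, because every raising operator acts on the first factor while $\varphi_{i_2}(b_{i_2}^{\ast}) > 0$ keeps $\varepsilon_{i_2}$ of the first factor positive (by the same signature/axiom argument as in Theorem~\ref{t:tableaux}), so the first factor never becomes highest weight. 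This contradiction finishes the proof. The only step needing genuine care is this boundary case; the rest is bookkeeping with signatures, and the conceptual point is that the highest weight condition confines the raising to the color $i_2$ matching the adjacent block, so the block boundary is never crossed by a permutation.
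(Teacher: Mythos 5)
Your proof is correct and follows essentially the same route as the paper: reduce to showing that $b^{\ast}$ is the unique highest weight node among the pairwise weakly increasing tensor products, handle adjacent factors from the same fundamental crystal via the ``unique lower neighbor of the source of $B(2\Lambda)$'' argument, and handle the block boundary $j=k_1$ by observing that the cancelling factor is already adjacent, so no permutation from the proof of Theorem~\ref{t:tableaux} is needed. The paper organizes the same two sub-arguments slightly differently (applying the Theorem~\ref{t:tableaux} argument to all $j\geq k_1$ and a downward induction for $j<k_1$), but the ingredients are identical.
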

\begin{proof}
We order the fundamental weights as $(\Lambda_{i_1}, \ldots, \Lambda_{i_1}, \Lambda_{i_2},
\ldots, \Lambda_{i_2})$ and apply the same argument 
as in the proof of Theorem~\ref{t:tableaux} to see that any highest weight node
in $I^{(\Lambda_{i_1}, \ldots, \Lambda_{i_1}, \Lambda_{i_2}, \ldots, \Lambda_{i_2})}(\{e\})$
must be of the form
\[ b_1 \otimes \cdots \otimes b_{k_1 - 1} \otimes b_{k_1}^{\ast} \otimes b_{k_1
+ 1}^{\ast} \otimes \cdots \otimes b_{k}^{\ast}. \]
In this case, it is never necessary to apply $\Phi_w$ to reorder the factors
because all of the factors to the right of factor $k_1$ must be the same.

Next, we let $j = k_1 - 1$.  We have that $b_{j+1} = b_{j+1}^{\ast}$ and we work
by downward induction to argue that $b_j$ must be $b_j^{\ast}$.  This follows
because due to the pairwise weak increasing condition there exists a sequence of
$e_i$ that takes $b_j \otimes b_{j+1}^{\ast}$ to $b_j^{\ast} \otimes
b_{j+1}^{\ast}$.  The highest weight node of the fundamental crystal
$B(\Lambda_{i_1})$ has a unique $i_1$-arrow.  If $b_j \neq b_j^{\ast}$ then we
could never traverse this edge because in the $i_1$-signature any $+$ would be
canceled by a $-$ from $b_{j+1}^{\ast}$.  Hence, $b_j = b_j^{\ast}$, and the
induction continues.

Thus, there is a unique highest-weight node in $I^{(\Lambda_{i_1}, \ldots,
\Lambda_{i_1}, \Lambda_{i_2}, \ldots, \Lambda_{i_2})}(\{e\})$.
\end{proof}

All of the crystals in our work have classical decompositions that have been
given by Chari \cite{Chari:2001}.  These crystals satisfy the requirement of
Lemma~\ref{l:weakly_increasing} that at most two fundamental weights appear.  On
the other hand, Example~\ref{ce:KN} shows that no ordering of the factors in
$B(\Lambda_2) \otimes B(\Lambda_1) \otimes B(\Lambda_6)$ in type $E_6$ admits an
analogous weakly increasing condition that is defined using only pairwise
comparisons.

\begin{example}\label{ce:KN}
Observe that each of the following nodes in type $E_6$ is a counterexample to
the condition required in \cite[Proposition 2.2.1]{KN}.  Each of the given nodes
is highest weight, and pairwise weakly increasing, but none of the nodes
correspond to the highest weight node of $B(\Lambda_1+\Lambda_6+\Lambda_2)$.

\begin{eqnarray*}
(3\bar{1}\bar{6} \otimes 1) \otimes u_1 \otimes u_6 \in B(\Lambda_2) \otimes B(\Lambda_1) \otimes B(\Lambda_6)  \\
(5\bar{3} \otimes \bar{1}3) \otimes u_6 \otimes u_1 \in B(\Lambda_2) \otimes B(\Lambda_6) \otimes B(\Lambda_1)  \\
\bar{2}5 \otimes u_6 \otimes u_2 \in B(\Lambda_1) \otimes B(\Lambda_6) \otimes B(\Lambda_2) \\
\bar{6}2 \otimes u_2 \otimes u_6 \in B(\Lambda_1) \otimes B(\Lambda_2) \otimes B(\Lambda_6) \\
\bar{2}3 \otimes u_1 \otimes u_2 \in B(\Lambda_6) \otimes B(\Lambda_1) \otimes B(\Lambda_2) \\
2\bar{1} \otimes u_2 \otimes u_1 \in B(\Lambda_6) \otimes B(\Lambda_2) \otimes B(\Lambda_1) \\
\end{eqnarray*}

Here, $u_i$ is the highest weight node of $B(\Lambda_i)$.  Hence, it is not possible
to obtain a pairwise weakly increasing condition that characterizes the nodes of
$B(\Lambda_1+\Lambda_6+\Lambda_2)$.
\end{example}

\begin{remark}
In standard monomial theory~\cite{LS:1986}, the condition that a tensor product of
basis elements lies in $B(\lambda+\mu)$ can also be formulated as a comparison of the lift
of these elements in Bruhat order~\cite{littelmann:1996}. For several tensor factors, one needs 
to compare simultaneous lifts.
\end{remark}

We now restrict to type $E_6$.  Lemma~\ref{l:weakly_increasing} implies that we
have a non-recursive description of all $B(k \Lambda_i)$ determined by the
finite information in $B(2 \Lambda_i)$.  In the case of particular fundamental
representations, we can be more specific about how to test for the weakly
increasing condition.

\begin{proposition} \label{p:L1}
We have that $b_1 \otimes b_2 \in B(2 \Lambda_1) \subset B(\Lambda_1)^{\otimes
2}$ if and only if $b_2$ can be reached from $b_1$ by a sequence of $f_i$
operations in $B(\Lambda_1)$.
\end{proposition}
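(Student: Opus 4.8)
The plan is to identify $B(2\Lambda_1)$ with the connected component of $B(\Lambda_1)^{\otimes 2}$ generated by $b^{\ast}\otimes b^{\ast}$, where $b^{\ast}$ is the unique highest weight node of $B(\Lambda_1)$, and to compare this component against the set
\[ S = \{\, b_1 \otimes b_2 : b_2 \text{ is reachable from } b_1 \text{ by a sequence of } f_i \text{ operations} \,\}. \]
Reachability defines a partial order on $B(\Lambda_1)$, which I will write $b_2 \leq b_1$: since $\wt$ strictly decreases along every $f_i$, the crystal graph is acyclic, and under this order $b^{\ast}$ is the maximum because it is the unique source. The whole proposition then amounts to the single equality $S = B(2\Lambda_1)$.

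First I would show that $S$ is a union of connected components of $B(\Lambda_1)^{\otimes 2}$, i.e.\ that $S$ is closed under every $e_i$ and $f_i$. Since $B(\Lambda_1)$ is minuscule we have $\varphi_i(b),\varepsilon_i(b)\in\{0,1\}$ for all $b$ and $i$, so the $i$-signature of $b_1\otimes b_2$ has at most four symbols and the tensor product rule of Section~\ref{ss:tensor} reduces to a short case analysis. The cases in which $f_i$ acts on the second factor, or $e_i$ acts on the first factor, are immediate: they replace $b_2$ by something smaller or $b_1$ by something larger, so $b_2\leq b_1$ is preserved. The only substantive cases are (a) $f_i$ acting on the first factor, which by the signature rule occurs exactly when $\varphi_i(b_1)=1$ and $\varphi_i(b_2)=0$, and (b) $e_i$ acting on the second factor, which occurs exactly when $\varepsilon_i(b_2)=1$ and $\varepsilon_i(b_1)=0$.

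The heart of the argument, and the step I expect to be the main obstacle, is the following two claims for $b_2\leq b_1$: (a) $\varphi_i(b_1)=1$ and $\varphi_i(b_2)=0$ imply $b_2\leq f_i(b_1)$, and (b) $\varepsilon_i(b_2)=1$ and $\varepsilon_i(b_1)=0$ imply $e_i(b_2)\leq b_1$. I would prove these using the realization of the minuscule crystal $B(\Lambda_1)$ as the lattice of order ideals of its associated minuscule poset, in which $b_2\leq b_1$ is ideal inclusion, $f_i$ deletes the unique maximal element of color $i$, and $e_i$ adjoins the unique minimal addable element of color $i$. For (a), let $p$ be the color-$i$ element removed by $f_i$ from the ideal $b_1$; if $p$ lay in the smaller ideal $b_2$ then, being maximal in $b_1\supseteq b_2$, it would be maximal in $b_2$ and hence removable, forcing $\varphi_i(b_2)=1$, a contradiction, so $p\notin b_2$ and $b_2\subseteq b_1\setminus\{p\}=f_i(b_1)$. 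Claim (b) is dual, using that the element adjoined to $b_2$ would otherwise be addable to the larger ideal $b_1$. Alternatively, both claims can be verified directly against the explicit $27$-node crystal of Figure~\ref{fig:L1}.

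Finally I would pin down which components make up $S$. As a union of components, $S$ is determined by the highest weight nodes it contains, and I claim the only such node is $b^{\ast}\otimes b^{\ast}$. Indeed, the rightmost factor of any highest weight tensor is itself highest weight (as in the proof of Theorem~\ref{t:tableaux}), so $b_2=b^{\ast}$; the condition $b_2\leq b_1$ then reads $b^{\ast}\leq b_1$, which forces $b_1=b^{\ast}$ since $b^{\ast}$ is the maximum of the order. Because $b^{\ast}\otimes b^{\ast}$ generates $B(2\Lambda_1)$, it follows that $S=B(2\Lambda_1)$, which is precisely the asserted equivalence.
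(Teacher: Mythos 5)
Your argument is correct, but it is a genuinely different proof from the paper's: the paper's entire proof of Proposition~\ref{p:L1} is the sentence ``This is a finite computation on $B(2\Lambda_1)$,'' i.e.\ a direct machine check (and Proposition~\ref{p:L2wi} is handled the same way). You instead argue structurally: the reachability set $S$ is closed under all $e_i$ and $f_i$ --- your signature-rule case analysis is right, since minusculeness forces $\varphi_i(b),\varepsilon_i(b)\in\{0,1\}$ with at most one of them nonzero, so $f_i$ moves to the first factor exactly when $\varphi_i(b_1)=1$, $\varphi_i(b_2)=0$, and dually for $e_i$ --- and the two nontrivial closure claims (a) and (b) do follow from the order-ideal model exactly as you say (an element maximal in the larger ideal and lying in the smaller one is maximal there too, forcing $\varphi_i(b_2)\geq 1$; dually for addable elements). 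Combined with the observation that the only highest weight node of $S$ is $b^{\ast}\otimes b^{\ast}$ and that every component of a finite crystal contains a highest weight node, this gives $S=B(2\Lambda_1)$. What your route buys is an explanation of \emph{why} the statement holds, and a proof that applies verbatim to $B(2\Lambda)$ for any minuscule fundamental weight in any type --- so it covers $B(\Lambda_6)$ of $E_6$ and $B(\Lambda_7)$ of $E_7$ at no extra cost and recovers the familiar type-$A$ condition. What it costs is the imported machinery: the realization of the minuscule crystal as the distributive lattice of order ideals of the minuscule poset, the identification of reachability with ideal inclusion, and the uniqueness of the maximal (resp.\ minimal addable) element of each color in an ideal; none of this is developed in the paper. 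Your fallback of verifying claims (a) and (b) directly on the $27$ nodes of Figure~\ref{fig:L1} removes that dependency, at the price of a finite verification that is smaller than, but similar in spirit to, the one the paper performs on $B(2\Lambda_1)$ itself.
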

\begin{proof}
This is a finite computation on $B(2 \Lambda_1)$.
\end{proof}

The crystal graph for $B(\Lambda_1)$ of Figure~\ref{fig:L1} can be viewed as a poset.
Then Proposition~\ref{p:L1} implies in particular that incomparable pairs in $B(\Lambda_1)$ are 
not weakly increasing.

There are 78 nodes in $B(\Lambda_2)$.  We construct $B(\Lambda_2)$ as the
highest weight crystal graph generated by $2\bar{1}\bar{0} \otimes \bar{0}1$
inside $B(\Lambda_6) \otimes B(\Lambda_1)$.  Note that we only need to use the
nodes in the ``top half'' of Figure~\ref{fig:L1} and their duals.  There are
2430 nodes in $B(2 \Lambda_2)$.  

\begin{proposition}\label{p:L2wi}
We have that 
\[ (b_1 \otimes c_1) \otimes (b_2 \otimes c_2) \in B(2 \Lambda_2) \subset (B(\Lambda_6) \otimes B(\Lambda_1))^{\otimes 2} \]
if and only if
\begin{enumerate}
    \item[(1)]  $b_2$ can be reached from $b_1$ by $f_i$ operations in
        $B(\Lambda_6)$, and $c_2$ can be reached from $c_1$ by $f_i$ operations in $B(\Lambda_1)$, and
    \item[(2)]  Whenever $c_1$ is dual to $b_2$, we have that there is a path of
        $f_i$ operations from $(b_1 \otimes c_1)$ to $(b_2 \otimes c_2)$ of length at least 1 (so in particular, the elements are not equal) in $B(\Lambda_2)$.
\end{enumerate}
\end{proposition}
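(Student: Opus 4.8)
The plan is to reduce membership in $B(2\Lambda_2)$ to a statement about $f_i$-reachability inside $B(\Lambda_2)$, and then to translate that statement, using the realization $B(\Lambda_2) \subset B(\Lambda_6) \otimes B(\Lambda_1)$ generated by $2\bar{1}\bar{0} \otimes \bar{0}1$, into the coordinatewise condition (1) together with the correction (2). Write $x = b_1 \otimes c_1$ and $y = b_2 \otimes c_2$, both nodes of $B(\Lambda_2)$. I would first record the principle underlying Proposition~\ref{p:L1}: for the top component $B(2\lambda) \subset B(\lambda)^{\otimes 2}$, membership of $x \otimes y$ forces $y$ to be reachable from $x$ by $f_i$-operations, and conversely reachability implies membership except at certain weight degeneracies. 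For the minuscule crystals $B(\Lambda_1)$ and $B(\Lambda_6)$ no degeneracy occurs, which is why Proposition~\ref{p:L1} is clean; for the adjoint crystal $B(\Lambda_2)$ the degeneracies do occur, and they are exactly the configurations that condition (2) is built to detect.

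Granting this principle, the necessity of (1) is the easy half. If the element lies in $B(2\Lambda_2)$ then $y$ is reachable from $x$ within $B(\Lambda_2)$, and since $B(\Lambda_2)$ is a subcrystal of $B(\Lambda_6) \otimes B(\Lambda_1)$, every $f_i$ along a path $x \to y$ acts, by the signature rule of Section~\ref{ss:tensor}, on exactly one of the two tensor factors. Reading the two factors off separately exhibits $b_2$ as reachable from $b_1$ in $B(\Lambda_6)$ and $c_2$ as reachable from $c_1$ in $B(\Lambda_1)$, which is precisely (1).

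For the converse I would start from (1) and try to splice the two coordinate paths $b_1 \to b_2$ and $c_1 \to c_2$ into a single path in $B(\Lambda_2)$, moving the $B(\Lambda_6)$-coordinate down to $b_2$ first and the $B(\Lambda_1)$-coordinate down to $c_2$ afterward. A short analysis of the two-factor signature rule on the minuscule weights shows that $f_i$ can be applied to the $b$-factor of $b \otimes c_1$ whenever $b$ and $c_1$ do not both carry an active $i$-lowering arrow, and that this competition can always be routed around unless $c_1$ is dual to the target value $b_2$. Hence, when $c_1$ is not dual to $b_2$, condition (1) alone produces a path, and therefore membership. The main obstacle is the remaining dual case: here the intermediate configuration $b_2 \otimes c_1$ has weight $0$, the two coordinate motions genuinely interfere, and this is exactly the place where reachability and membership in $B(2\Lambda_2)$ can part company, as the degeneration $z \otimes z$ with $z$ of weight $0$ already shows.

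It remains to close the dual case, where condition (2) enters. Here I would show that membership in $B(2\Lambda_2)$ is equivalent to the existence of a positive-length $f_i$-path from $x$ to $y$ in $B(\Lambda_2)$. The forward implication is immediate, since a length-$0$ path means $x = y$ and such a diagonal node is pushed into a lower component exactly as in the weight-$0$ degeneration above; the reverse implication asserts that once $x$ and $y$ are distinct and joined by a path in the dual case, no further degeneration intervenes. Since only finitely many dual pairs $(c_1, b_2)$ occur and $B(2\Lambda_2)$ has $2430$ nodes, this last equivalence is a finite verification on $B(2\Lambda_2)$, carried out exactly as in Proposition~\ref{p:L1}. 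Combining the non-dual and dual cases yields the stated equivalence.
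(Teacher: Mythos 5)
Your overall architecture is reasonable, but several of its load-bearing steps are asserted rather than proved, and they are exactly where the difficulty of the statement lives. The ``principle'' you extract from Proposition~\ref{p:L1} --- that $x \otimes y \in B(2\lambda)$ forces $y$ to be reachable from $x$ by $f_i$-operations, and that reachability conversely gives membership ``except at certain weight degeneracies'' --- is not a theorem you have available; for minuscule $\lambda$ it is itself only established in the paper by a finite computation, and for the adjoint weight $\Lambda_2$ (where $B(\Lambda_2)$ has six weight-$0$ nodes sitting in the middles of $i$-strings) it is precisely the content in dispute. You use the forward half of this principle to deduce necessity of (1) and the backward half to set up sufficiency, so both directions of your proof rest on an unproved claim. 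Likewise, in the sufficiency argument the assertion that the interference between the two coordinate paths ``can always be routed around unless $c_1$ is dual to $b_2$'' is the entire non-dual case of the proposition; no signature-rule analysis is actually carried out, and it is not obvious that a greedy splicing (lower the $B(\Lambda_6)$ factor first, then the $B(\Lambda_1)$ factor) never gets stuck at some intermediate $b \otimes c_1$. Finally, the finite verification you invoke at the end is scoped only to the dual case, so the non-dual case is left resting entirely on the two unproved assertions above.

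For comparison: the paper's own proof of this proposition is the single sentence ``This is a finite computation on $B(2\Lambda_2)$,'' i.e., a direct check of the biconditional over the $2430$ nodes of $B(2\Lambda_2)$ inside $(B(\Lambda_6)\otimes B(\Lambda_1))^{\otimes 2}$. Every claim in your write-up is a statement about this one finite crystal, so the repair is simply to widen your concluding verification to cover the reachability principle and the routing claim as well --- at which point your proof collapses to the paper's. If you want the conceptual scaffolding to carry real weight, you would need to prove the reachability criterion for membership in $B(\lambda+\mu)$ (e.g., via the Bruhat-order lifting condition of standard monomial theory mentioned in the remark after Example~\ref{ce:KN}) rather than cite it as a principle.
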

\begin{proof}
This is a finite computation on $B(2 \Lambda_2)$.
\end{proof}


\bigskip
\section{Affine structure}\label{s:affine_structure}

In this section, we study the affine crystals of type $E_6^{(1)}$.
We introduce the method of promotion to obtain a combinatorial affine crystal structure in
Section~\ref{ss:promotion} and the notion of composition graphs in Section~\ref{ss:CG}.
It is shown in Theorem~\ref{t:main} that
order three twisted isomorphisms yield regular affine crystals. This is used to
construct $B^{r,s}$ of type $E_6^{(1)}$ for the minuscule nodes $r=1,6$ in
Section~\ref{ss:L1} and the adjoint node $r=2$ in Section~\ref{ss:L2}.  In
Section~\ref{ss:E7} we present conjectures for $B^{1,s}$ of type $E_7^{(1)}$.

\subsection{Combinatorial affine crystals and twisted isomorphisms}
\label{ss:promotion}

The following concept is fundamental to this work.

\begin{definition}\label{d:ti}
Let $\widetilde{C}$ be an affine Dynkin diagram and $C$ the associated finite Dynkin
diagram (obtained by removing node 0) with index set $I$.
Let $\p$ be an automorphism of $\widetilde{C}$, and $B$ be a classical crystal of type $C$.  
We say that \em $\p$ induces a twisted isomorphism of crystals \em if
there exists a bijection of crystals $p: B \cup \{\zero\} \rightarrow B' \cup \{\zero\}$
satisfying 
\begin{equation}\label{e:zero}
p(b) = \zero \text{ if and only if $b = \zero$, and }
\end{equation}
\begin{equation}\label{e:twist}
p \circ f_i (b) = f_{\p(i)} \circ p (b) \text{ and } p \circ e_i (b) = e_{\p(i)} \circ p (b)
\end{equation}
for all $i \in I \setminus \{\p^{-1}(0)\}$ and all $b \in B$. 

We frequently abuse notation and denote $B'$ by $p(B)$ even though the
isomorphism $p: B \rightarrow p(B)$ may not be unique.  

If we are given two classical crystals $B$ and $B'$, and there exists a Dynkin
diagram automorphism $\p$ that induces a twisted isomorphism between $B$ and
$B'$, then we say that $B$ and $B'$ are \em twisted-isomorphic\em.
\end{definition}

\begin{definition}
Let $B$ be a crystal with index set $I$. Then $B$ is called \emph{regular} if
for any 2-subset $J\subset I$, we have that the restriction of $B$ to its $J$-arrows
is a classical rank two crystal.
\end{definition}

\begin{definition}\label{d:cas}
Let $B$ be a classical crystal with index set $I$.  Suppose $\widetilde{B}$ is a labeled
directed graph on the same nodes as $B$ and with the same $I$-arrows, but with
an additional set of 0-arrows.  If $\widetilde{B}$ is regular, then we say that 
$\widetilde{B}$ is a \em combinatorial affine structure \em for $B$.  
\end{definition}

\begin{remark}\label{r:reality}
Although we do not assume that $\widetilde{B}$ is a crystal graph for a
$U_{q}'(\mathfrak{g})$-module, Kashiwara \cite{K:2002, K:2005} has shown that
the crystals of such modules must be regular and have weights at level 0.
Therefore, we will compute the 0-weight $\lambda_0 \Lambda_0$ of the nodes $b$ in a classical 
crystal from the classical weight $\lambda = \sum_{i\in I} \lambda_i \Lambda_i = \wt(b)$ using 
the formula given in Equation~(\ref{e:level}).
\end{remark}

\begin{remark}\label{r:consequences}
Here are some consequences of Definitions~\ref{d:ti} and \ref{d:cas}.

\begin{enumerate}
\item[(1)]  Any crystal $p(B)$ induced by $\p$ is just a classical crystal that
    is isomorphic to $B$ up to relabeling.  In particular, any graph
    automorphism $\p$ induces at least one twisted isomorphism $p$: If we view
    $B$ as an edge-labeled directed graph, the image of $p$ is given on the same
    nodes as $B$ by relabeling all of the arrows according to $\p$.  On the
    other hand, it is important to emphasize that there is no canonical labeling
    for the nodes of $p(B)$.  Also, some crystal graphs may have additional
    symmetry which lead to multiple twisted isomorphisms of crystals associated
    with a single graph automorphism $\p$.
\item[(2)]  For $b \in B$, we have $\varphi(p(b)) = \sum_{i\in I}
    \varphi_{\p^{-1}(i)}(b)\Lambda_i$ and $\varepsilon(p(b)) = \sum_{i\in I}
    \varepsilon_{\p^{-1}(i)}(b)\Lambda_i$.
    In addition, we can compute the 0-weight of
    any node in $B$ by Remark~\ref{r:reality}.  Therefore, $\p$ permutes all of
    the affine weights, in the sense that 
    \[ \wt_{i}(b) = \wt_{\p(i)}(p(b)) \text{ \ \ \ \ for all $b \in B$ and $i \in I \cup \{0\}$ }. \]
\item[(3)]  Since the node $\p(0)$ becomes the affine node in $p(B)$, it is
    sometimes possible to define a combinatorial affine structure for $B$ ``by
    promotion.''  Namely, we define $f_0$ on $B$ to be $p^{-1} \circ f_{\p(0)} \circ p$.
    Note that in order for this to succeed, we must take the additional step of
    identifying the image $p(B)$ with a canonically labeled classical crystal so
    that we can infer the $f_{\p(0)}$ edges.
\end{enumerate}
\end{remark}

\begin{example}
The $E_6$ Dynkin diagram automorphism of order two that interchanges nodes 1 and 6
induces the dual map between $B(\Lambda_1)$ and $B(\Lambda_6)$.  
\end{example}

\begin{example}
Let $\p$ be the unique $E_6^{(1)}$ Dynkin diagram automorphism of order three
sending node 0 to 1.  There is no twisted isomorphism of $B(\Lambda_2)$ to
itself that is induced by $\p$.  To see this, consider the six nodes of weight 0
inside $B(\Lambda_2)$.  Observe that there is precisely one node of weight 0
lying in the center of an $i$-string for each $i \in \{1, 2, \ldots, 6\}$.  The
twisted isomorphism $p$ must send the node lying in the middle of a 6-string to
one which lies only in the middle of a 0-string and is not connected to any
other classical edges by Equation~(\ref{e:twist}).  But no such node exists in
$B(\Lambda_2)$. This is in agreement with~\eqref{eq:adjoint decomp} that an affine 
structure for the adjoint node exists on $B(\Lambda_2) \oplus B(0)$.
\end{example}

\begin{example}\label{e:prom2}
Consider the crystal $B$ of type $A_1$ shown below.
\[
\xymatrix@-1pc{ & \gn^a \ar[d]^1 \\
 B = & \gn^b \ar[d]^1 & \gn^c \\
 & \gn^d \\ } \ \ \hspace{1in} \ \ \xymatrix@-1pc{ & \gn^{a'} \ar[d]^0 \\
p(B) = & \gn^{b'} \ar[d]^0 & \gn^{c'} \\
 & \gn^{d'} \\ } 
\]
The only nontrivial graph automorphism $\p$ of the affine Dynkin diagram of
type $A_1^{(1)}$ interchanges 0 and 1, which induces $p(B)$ as shown.  However,
constructing an affine structure on $B$ by promotion requires choosing another
map from $p(B)$ back to $B$.

By considering the level-0 weight, we must identify $a'$ with $d$ as well as
$d'$ with $a$.  Since there is no restriction on $\varphi_0(b)$ nor
$\varepsilon_0(b)$ for $b \in \{b,c\}$ from the given data, the other two nodes
are undetermined.  Hence, there are two identifications which give rise to
distinct 0-arrows for $B$.
\[ 
\xymatrix@-0.5pc{ & \gn^a \ar@/_/[d]_1 \\
 & \gn^b \ar@/_/[d]_1 \ar@/_/[u]_0 & \gn^c \\
 & \gn^d \ar@/_/[u]_0\\ }
 \hspace{1in}
\xymatrix@-0.5pc{ & \gn^a \ar[d]^1 \\
 & \gn^b \ar[d]^1 & \gn^c \ar@/_/[ul]_0 \\
 & \gn^d \ar@/_/[ur]_0\\ }
\]
This example shows how twisted isomorphisms of order two can give rise to
multiple affine structures.
\end{example}

The Dynkin diagram of $E_6^{(1)}$ has an automorphism of order three that we can use to
construct combinatorial affine structures by promotion.

\begin{theorem}\label{t:main}
Let $B$ be a classical $E_6$ crystal.  Suppose there exists a bijection $p : B
\rightarrow B$ that is a twisted isomorphism satisfying $p \circ f_1 = f_6 \circ p$, and
suppose that $p$ has order three.  Then, there exists a combinatorial affine
structure on $B$.  This structure is given by defining $f_0$ to be $p^2 \circ f_1 \circ p$.
\end{theorem}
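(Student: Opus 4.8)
The plan is to define $f_0 := p^{2}\circ f_1\circ p$ (equivalently $p^{-1}\circ f_1\circ p$, since $p^{3}=\mathrm{id}$) together with the companion raising operator $e_0 := p^{2}\circ e_1\circ p$, exactly as the promotion recipe of Remark~\ref{r:consequences}(3) prescribes once $\p(0)=1$, and then to prove that the resulting graph $\widetilde B$ is a combinatorial affine structure, i.e.\ regular in the sense required by Definition~\ref{d:cas}. The rank-one axioms at node $0$ (the relation $f_0(b)=b'\Leftrightarrow e_0(b')=b$ and $\wt_0(b)=\varphi_0(b)-\varepsilon_0(b)$) transport directly from those at node $1$ because $p$ is a bijection on $B\cup\{\zero\}$ fixing $\zero$; the real content is the regularity of $\widetilde B$. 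The engine I would build for this is a single structural observation: conjugation by $p$ permutes the whole family $\{f_0,f_1,\dots,f_6\}$ exactly as the diagram automorphism $\p$ permutes the extended index set, where $\p$ is the order-three rotation $0\mapsto1\mapsto6\mapsto0$, $2\mapsto3\mapsto5\mapsto2$, $4\mapsto4$.

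The key step is therefore to verify $p\circ f_i\circ p^{-1}=f_{\p(i)}$ for \emph{every} $i\in I\cup\{0\}$. For $i\in\{1,2,3,4,5\}$ this is hypothesis~\eqref{e:twist}; for $i=0$ it is immediate from the definition, since $p\circ f_0\circ p^{-1}=p\circ(p^{-1}f_1p)\circ p^{-1}=f_1=f_{\p(0)}$. The only genuinely new case is $i=6$, where the twist relation is unavailable because $\p(6)=0$, and here order three is indispensable. From $p\circ f_1\circ p^{-1}=f_6$ I get $f_1=p^{-1}f_6 p$, and substituting into the definition yields
\[
f_0 = p^{-1}f_1 p = p^{-1}(p^{-1}f_6 p)p = p^{-2}f_6 p^{2} = p\,f_6\,p^{-1},
\]
using $p^{3}=\mathrm{id}$. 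Hence $p\circ f_6\circ p^{-1}=f_0=f_{\p(6)}$, and the identical computation with $e$'s handles the raising operators. By Remark~\ref{r:consequences}(2) the bijection $p$ also permutes the affine weights, $\wt_i(b)=\wt_{\p(i)}(p(b))$ for all $i\in I\cup\{0\}$ (using the level-$0$ convention of Remark~\ref{r:reality}), so $p$ is an automorphism of the edge-colored, affine-weighted graph $\widetilde B$ that relabels arrow colors and weight slots according to $\p$.

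With this in hand, regularity becomes a finite case analysis over the $2$-subsets $J\subseteq I\cup\{0\}$. If $0\notin J$, then $J\subseteq I$ and the $J$-restriction of $\widetilde B$ is just the $J$-restriction of the classical $E_6$ crystal $B$, hence a rank-two crystal. If $0\in J$, I would apply a suitable power of $p$. Because $0$ lies in the three-cycle $\{0,1,6\}$, each pair $\{0,i\}$ is carried onto a $2$-subset of $I$: explicitly $\p$ sends $\{0,1\}\mapsto\{1,6\}$, $\{0,2\}\mapsto\{1,3\}$, $\{0,3\}\mapsto\{1,5\}$, $\{0,4\}\mapsto\{1,4\}$, $\{0,5\}\mapsto\{1,2\}$, while the one remaining pair $\{0,6\}$ maps to $\{1,0\}$ under $\p$ but to $\{1,6\}$ under $\p^{2}$. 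Since $p$ (resp.\ $p^{2}$) is a weight-permuting isomorphism of edge-colored graphs carrying the $J$-arrows of $\widetilde B$ onto the $\p(J)$-arrows (resp.\ $\p^{2}(J)$-arrows), and since $\p$ preserves the Dynkin diagram so that $J$ and its image have the same rank-two type, the $J$-restriction is a rank-two crystal precisely because its classical image is. As every $2$-subset is accounted for, $\widetilde B$ is regular, giving the asserted combinatorial affine structure.

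The main obstacle, and the heart of why the order-three hypothesis is exactly the right one, is the regularity of the pairs containing node $0$, which rests on two order-three facts working together: the identity $p\circ f_6\circ p^{-1}=f_0$ forced by $p^{3}=\mathrm{id}$, which upgrades $p$ from a twisted isomorphism of the classical crystal to a full $\p$-symmetry of the affine graph, and the resulting size-three orbit $\{0,1,6\}$, which lets even the ``diagonal'' pair $\{0,6\}=\{0,\p^{-1}(0)\}$ be rotated by $p^{2}$ onto the classical pair $\{1,6\}$. I would emphasize that this second point is exactly where an order-two promotion breaks down: for an order-two automorphism the orbit $\{0,\p(0)\}$ has size two, so the pair $\{0,\p(0)\}$ is $\p$-invariant and no power of $p$ removes node $0$ from it; its regularity cannot be obtained by transport and may genuinely fail. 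This is precisely why the analogue of the present theorem is missing in the order-two $E_7^{(1)}$ setting, and it is consistent with Example~\ref{e:prom2}, where an order-two promotion already produces ambiguous affine structures.
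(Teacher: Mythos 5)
Your proof is correct and follows essentially the same route as the paper's: the identity $f_0=p^{-1}f_1p=p\,f_6\,p^{-1}$ forced by $p^3=\mathrm{id}$, followed by transporting each pair $\{0,i\}$ with $i\le 5$ via $p$ to the classical pair $\{1,\p(i)\}$ and the remaining pair $\{0,6\}$ via $p^2$ to $\{1,6\}$. The only cosmetic difference is that the paper justifies the transport of regularity by citing Stembridge's local axioms, whereas you argue directly that $p$ is a color-relabeling isomorphism of the restricted edge-colored graphs; both are valid.
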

\begin{proof}
If we apply $p$ on the left and right of $p f_1 = f_6 p$, we obtain $p p f_1 p =
p f_6 p p$.  Since $p$ has order three, this is
\begin{equation}\label{e:as}
    p^{-1} f_1 p = p f_6 p^{-1}.
\end{equation}
Because $p$ is a bijection on $B$, we may define 0-arrows on $B$ by the map
$p^{-1} f_1 p$.  By the hypotheses, $p$ must be induced by the unique Dynkin
diagram automorphism $\p$ of order three that sends node 0 to 1.

To verify that this affine structure satisfies Definition~\ref{d:cas}, we need
to check that restricting $B$ to $\{0, i\}$-arrows is a crystal for all $i \in
I$.  Each of these restrictions corresponds to a rank 2 classical crystal, and
Stembridge has given local rules in \cite{stembridge:2003} that characterize
such classical crystals in simply laced types.  Moreover, these rules depend
only on calculations involving $\varphi_i(b)$ and $\varepsilon_i(b)$ at each node $b\in B$.
Therefore, to check the restrictions for $i = 1, 2, 3, 4, 5$, it suffices by
Equation~(\ref{e:twist}) to apply $p$ and note that Stembridge's rules are
satisfied for the restriction of $B$ to $\{1, \p(i)\}$-arrows, since $B$ is a
classical crystal.  Here, $\p(i) = 6, 3, 5, 4, 2$, respectively.  
To check the restriction for $i = 6$, we use Equation~(\ref{e:as}) obtaining
\[ p p f_6 = p p f_6 p^{-1} p = p p^{-1} f_1 p p = f_1 p p \]
and
\[ p p f_0 = p p p f_6 p^{-1} = f_6 p p. \]
These imply that we can apply $p^2 = p^{-1}$ and note that Stembridge's rules
are satisfied for the restriction of $B$ to $\{6, 1\}$-arrows, since $B$ is
a classical crystal.  

Hence, we obtain a combinatorial affine structure for $B$.
\end{proof}

From now on, we use the notation $p$ to denote a twisted isomorphism induced by
$\p$ sending 
\[ 0 \mapsto 1 \mapsto 6 \mapsto 0, 2 \mapsto 3 \mapsto 5 \mapsto 2, 4 \mapsto 4. \]
Also, we let $\p$ act on the affine weight lattice as in
Remark~\ref{r:consequences}(2).


\bigskip
\subsection{Composition graphs}
\label{ss:CG}

Let $I = \{1, 2, \ldots, 6\}$ be the index set for the Dynkin diagram of $E_6$,
and $\widetilde{I} = I \cup \{0\}$ be the index set of $E_6^{(1)}$.
Suppose $J \subset I$.  Consider a
classical crystal $B$ of the form $\bigoplus B(k \Lambda)$ where $\Lambda$ is a
fundamental weight and we sum over some collection of nonnegative integers $k$.
Let $H^{J}(B)$ denote the $(I \setminus J)$-highest weight nodes of $B$.  By
incorporating the level 0 hypothesis of Remark~\ref{r:reality}, we also consider
the $(\widetilde{I} \setminus J)$-highest weight nodes of $B$ denoted by
$H^{J;0}(B)$.  

Our general strategy to define a twisted isomorphism $p$ on a classical
crystal $B$ will be to first define $p$ on $H^J(B)$, and then extend this
definition to the rest of $B$ using Equation~(\ref{e:twist}).  To accomplish
this, we introduce the following model for the nodes in $H^J(B)$ and
$H^{J;0}(B)$.

\begin{definition}\label{d:composition_graph}
Fix $J \subset I$ and form directed graphs $G_J$ and $G_{J;0}$ as follows.  

We construct the vertices of $G_J$ and $G_{J;0}$ iteratively, beginning with
all of the $(I \setminus J)$-highest weight nodes of $B(\Lambda)$.
Then, we add all of the vertices $b \in B(\Lambda)$ such that
\[ \{i \in I : \varepsilon_i(b) > 0\} \subset J \union \{i \in I : \parbox[t]{4in}{ there
exists $b' \in G_J$ with $b \otimes b'$ pairwise weakly increasing and
$\varphi_i(b') > 0$ \} } \]
to $G_J$.  Moreover, if $b$ also satisfies the property that there exists $b'
\in G_{J;0}$ with $b \otimes b'$ pairwise weakly increasing and $\wt_0(b') > 0$
whenever $\wt_0(b) < 0$, then we add $b$ to $G_{J;0}$.  We repeat this
construction until no new vertices are added.  This process eventually
terminates since $B(\Lambda)$ is finite.

The edges of $G_J$ and $G_{J;0}$ are determined by the pairwise weakly
increasing condition described in Definition~\ref{d:pwi}.  Note that some nodes
may have loops.  We call $G_J$ and $G_{J;0}$ the \em complete composition graph
\em for $J$ and $J;0$, respectively.
\end{definition}

\begin{lemma}\label{l:cg}
Every element of $H^{J}(B)$ and $H^{J;0}(B)$ is a pairwise weakly increasing
tensor product of vertices that form a directed path in $G_J$, respectively
$G_{J;0}$, where the element in $B(0)\subset H^J(B)$ is identified with the
empty tensor product.
\end{lemma}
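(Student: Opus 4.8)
The plan is to realize an arbitrary $(I\setminus J)$-highest weight node of $B$ as a tensor product of $B(\Lambda)$-factors and then recognize those factors as the vertices of a directed path in $G_J$. Since $B=\bigoplus_k B(k\Lambda)$ and $\Lambda$ is fundamental, the case $k_2=0$ of Lemma~\ref{l:weakly_increasing} writes every node $b\in H^{J}(B)$ as a pairwise weakly increasing tensor product $b=b_1\otimes\cdots\otimes b_k$ inside $B(\Lambda)^{\otimes k}$; the unique node of $B(0)$ corresponds to $k=0$, i.e.\ the empty tensor product. The statement then splits into two claims: (a) each factor $b_\ell$ is a \emph{vertex} of $G_J$ (resp.\ $G_{J;0}$), and (b) each consecutive pair $b_\ell\otimes b_{\ell+1}$ is an \emph{edge}. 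Claim (b) is immediate, because by Definition~\ref{d:composition_graph} the edges are exactly the pairwise weakly increasing pairs and any consecutive pair of a pairwise weakly increasing word is pairwise weakly increasing; so the content is claim (a), and together (a) and (b) exhibit $b_1,\dots,b_k$ as a directed path.

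The key preliminary observation is that every \emph{suffix} $b_\ell\otimes\cdots\otimes b_k$ of an $(I\setminus J)$-highest weight element is again $(I\setminus J)$-highest weight. I would prove this at the level of the $i$-signature for a fixed $i\in I\setminus J$: writing the reduced $i$-signature of the prefix $b_1\otimes\cdots\otimes b_{\ell-1}$ as $-^{c}+^{d}$ and that of the suffix as $-^{a}+^{e}$, cancelling the block $+^{d}-^{a}$ shows that the whole word has $e$ unbracketed $+$ symbols when $d<a$ and $d-a+e\ge e$ of them when $d\ge a$; in either case at least the number $e$ of unbracketed $+$ symbols of the suffix. Hence if the full word is $i$-highest weight (no unbracketed $+$) then so is the suffix, for every $i\in I\setminus J$.

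I would then prove claim (a) by downward induction on $\ell$. For $\ell=k$ the factor $b_k$ is a suffix of length one, hence $(I\setminus J)$-highest weight, so it lies in the initial vertex set of $G_J$. For the inductive step, assume $b_{\ell+1},\dots,b_k$ are vertices and fix $i\in I\setminus J$ with $\varepsilon_i(b_\ell)>0$. Since the suffix $b_\ell\otimes\cdots\otimes b_k$ is $i$-highest weight, all of the $\varepsilon_i(b_\ell)$ plus-symbols of $b_\ell$ are bracketed against minus-symbols of later factors, so there is an $m>\ell$ with $\varphi_i(b_m)>0$, and by induction $b_m$ is a vertex. It remains to check that $b_\ell\otimes b_m$ is pairwise weakly increasing, which then verifies the membership condition of Definition~\ref{d:composition_graph} for $b_\ell$. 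This last point is the main obstacle, since pairwise weak increase is only given along the \emph{consecutive} pairs $b_\ell\otimes b_{\ell+1},\dots,b_{m-1}\otimes b_m$, and one must pass to the skipped pair $b_\ell\otimes b_m$. I would resolve it by transitivity of the pairwise weakly increasing relation: by Proposition~\ref{p:L1} and its dual the relation ``$x\otimes y\in B(2\Lambda)$'' coincides with ``$y$ is reachable from $x$ by $f_i$ operations'', which is the order relation of the poset $B(\Lambda)$ and hence transitive, so $b_\ell\le b_{\ell+1}\le\cdots\le b_m$ gives $b_\ell\otimes b_m\in B(2\Lambda)$. This completes the induction, and the $G_{J;0}$ statement follows by the identical argument with the level-0 weight $\wt_0$ of Remark~\ref{r:reality} playing the role of $\varepsilon_0$ for the added node $0$, invoking the extra clause of Definition~\ref{d:composition_graph}.
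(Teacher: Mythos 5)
Your overall strategy is the same as the paper's: reduce to showing that every tensor factor of an $(I\setminus J)$-highest weight element is a vertex of $G_J$ (the edge condition on consecutive pairs being immediate from Lemma~\ref{l:weakly_increasing}), use the fact that suffixes of highest weight tensor products are again highest weight to locate, for each $i\in I\setminus J$ with $\varepsilon_i(b_\ell)>0$, a later factor $b_m$ with $\varphi_i(b_m)>0$ to serve as the witness $b'$ in Definition~\ref{d:composition_graph}, and conclude by induction (the paper inducts on the number of factors and phrases the step as excluding nodes that can never be cancelled; you induct downward on the position — these are equivalent). Your signature computation for the suffix claim and your treatment of the $\wt_0$ clause for $G_{J;0}$ are sound.

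The problem is in the step you yourself identify as the main obstacle: passing from the consecutive pairwise weakly increasing pairs $b_\ell\otimes b_{\ell+1},\dots,b_{m-1}\otimes b_m$ to the skipped pair $b_\ell\otimes b_m$. You justify this by transitivity, citing Proposition~\ref{p:L1} and its dual, but those results cover only $\Lambda=\Lambda_1$ and $\Lambda=\Lambda_6$ of $E_6$. The lemma is stated for an arbitrary fundamental weight $\Lambda$, and its principal application is $\Lambda=\Lambda_2$ (Section~\ref{ss:L2}, and similarly $\Lambda_1$ of $E_7$ in Section~\ref{ss:E7}), where the pairwise weakly increasing relation is \emph{not} simply the order relation of the poset $B(\Lambda)$: Proposition~\ref{p:L2wi} carries the extra condition (2) involving dual pairs, and transitivity of that relation is not automatic. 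The paper explicitly concedes the point ("it is not obvious from Definition~\ref{d:pwi} whether the pairwise weakly increasing condition is generally transitive") and closes the gap by a separate finite computation verifying that the vertex sets of the specific composition graphs used are transitively closed. Your proof needs that computational input (or some other device for producing a witness $b'$ with $b_\ell\otimes b'$ pairwise weakly increasing when the cancelling factor is not adjacent); as written, the appeal to Proposition~\ref{p:L1} does not cover the adjoint case, which is the one the main theorem depends on.
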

\begin{proof}
We induct on the number of tensor factors $k$ to show that the algorithm in
Definition~\ref{d:composition_graph} produces all of the elements of $H^J(B)$
and $H^{J;0}(B)$ from component $B(k \Lambda)$.  The base case of $k = 1$ is
satisfied because we initially add all of the $(I \setminus J)$-highest weight
nodes of $B(\Lambda)$ to the complete composition graph, and $H^{J;0}(B) \subset
H^{J}(B)$.

For the induction step, observe that we branch on the left by the tensor product
rule.  That is, when $b \otimes b'$ is highest weight, we must have that $b'$ is
highest weight.  If there exists $b \in G_J$ with $\varepsilon_i(b) > 0$ where
\[ i \notin J \union \{i \in I : \varphi_i(b') > 0 \text{ for some } b' \in G_J
\text{ such that $b \otimes b'$ is pairwise weakly increasing } \} \]
then no tensor product of nodes that includes $b$ can ever cancel the $+$ from
$\varepsilon_i(b) > 0$ in the tensor product rule.  Therefore, no tensor
product of nodes that includes $b$ can be $(I \setminus J)$-highest weight.

Similarly for the case of $J;0$, if there exists $b \in G_{J;0}$ with $\wt_0(b) <
0$ and there is no $b' \in G_{J;0}$ with $b \otimes b'$ pairwise weakly increasing
and $\wt_0(b') > 0$, then we can conclude that $\wt_0$ of any tensor product of
nodes that starts with $b$ is negative.  Since every rightmost factor of a
highest weight tensor product must be highest weight, this would imply that no
tensor product of nodes that includes $b$ can be $(\widetilde{I} \setminus
J)$-highest weight.  

Hence, every $(I \setminus J)$-highest weight node is given by a pairwise
weakly increasing tensor product of vertices from $G_J$, and every
$(\widetilde{I} \setminus J)$-highest weight node is given by a pairwise weakly
increasing tensor product of vertices from $G_{J;0}$.
\end{proof}

We say that the vertices of $G_J$ are \em transitively closed \em if $b \otimes
c$ is pairwise weakly increasing whenever $b \otimes b'$ and $b' \otimes c$ are
pairwise weakly increasing for all $b, b', c \in G_J$.  Although it is not
obvious from Definition~\ref{d:pwi} whether the pairwise weakly increasing
condition is generally transitive, it is always a finite computation to verify
that the vertices of $G_J$ are transitively closed when $J$ is fixed.  Moreover,
it is straightforward to verify that all of the vertex sets that explicitly
appear in this work are transitively closed.

Therefore, we will typically draw only those edges of the complete composition
graph $G_J$ that cannot be inferred by transitivity, and we refer to this as the
\em (reduced) composition graph\em.  We will also abuse notation and refer to
this reduced composition graph as $G_J$.  We say that a \em chain \em is any
collection of vertices that form a subgraph of a directed path in a reduced
composition graph.  Lemma~\ref{l:cg} shows that we may identify nodes of
$H^{J}(B)$ from component $B(k \Lambda)$ of $B$ with chains in the reduced
composition graph $G_J$ having exactly $k$ vertices.  Analogues of all the
definitions and statements given in the previous two paragraphs hold for
$G_{J;0}$ and $H^{J;0}(B)$ as well.

We will see several examples of composition graphs in the following sections.


\bigskip
\subsection{Affine structures associated to $\Lambda_1$ and $\Lambda_6$}
\label{ss:L1}

Let $r \in \{1, 6\}$.  By~\cite[Proposition 3.4.4]{KKMMNN:1992}, a crystal basis
for the Kirillov--Reshetikhin module associated to $s \Lambda_r$ exists.  We
denote this crystal by $B^{r,s}$.  It follows from \cite{Chari:2001} that
$B^{r,s} \cong B(s \Lambda_r)$ as classical crystals.  In this section, we
construct a combinatorial model for $B^{r,s}$ in the sense of 
Definition~\ref{d:cas} using the order 3 Dynkin diagram automorphism of
$E_6^{(1)}$. 

Let $I=\{1,2,3,4,5,6\}$ be the index set of the $E_6$ Dynkin diagram, $J =
\{0, 2, 3, \ldots, 6\}$, and $K=I\setminus \{1\}=\{2,3,4,5,6\}$.  In this section, we use the weakly increasing
characterization given in Proposition~\ref{p:L1}.  This characterization implies
that the pairwise weakly increasing condition is transitive, so we draw reduced
composition graphs.

\bigskip

\begin{figure}[ht]
\begin{tabular}{c}
\xymatrix{ 
\bar{0}1 \ar[r] \ar@(ul,ur) & \bar{0}\bar{1}3 \ar[r] \ar@(ul,ur) & \bar{1}6 \ar@(ul,ur) \\
}
\end{tabular}
\caption{Composition graph for $I \setminus \{1\}$-highest weight nodes in $B(\Lambda_1)$}
\label{f:L1_01_hw_graph}
\end{figure}
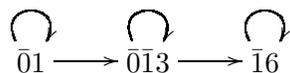

\begin{lemma}\label{lemma:mult free}
For $r\in \{1,6\}$, the $K$-highest weight nodes in
$B(s\Lambda_r)$ are distinguished by their $K$-weights.
\end{lemma}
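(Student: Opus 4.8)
The plan is to turn the statement into a finite count of chains in the composition graph and then separate those chains by weight. First I would use Lemma~\ref{l:weakly_increasing} together with Proposition~\ref{p:L1} to identify the nodes of $B(s\Lambda_1)$ with weakly increasing chains $b_1\otimes\cdots\otimes b_s$ in the poset $B(\Lambda_1)$. By Lemma~\ref{l:cg}, the $K$-highest weight nodes among these are exactly the chains of length $s$ drawn from the composition graph of $(I\setminus\{1\})$-highest weight nodes of $B(\Lambda_1)$ shown in Figure~\ref{f:L1_01_hw_graph}. Since that graph is the single loop-decorated directed path $\bar{0}1\to\bar{0}\bar{1}3\to\bar{1}6$, a chain of length $s$ is just a weakly increasing word in these three letters, so every $K$-highest weight node of $B(s\Lambda_1)$ has the form $(\bar{0}1)^{\otimes a}\otimes(\bar{0}\bar{1}3)^{\otimes b}\otimes(\bar{1}6)^{\otimes c}$ and is determined by the triple $(a,b,c)$ with $a+b+c=s$.

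The second step is a short weight computation. Reading the labels in Figure~\ref{fig:L1}, where (because every $i$-string has length one) an incoming $i$-arrow contributes $-1$ and an outgoing $i$-arrow contributes $+1$ to $\wt_i$, the tuples $(\wt_2,\wt_3,\wt_4,\wt_5,\wt_6)$ of the three vertices are $(0,0,0,0,0)$, $(0,1,0,0,0)$, and $(0,0,0,0,1)$. Hence the $K$-weight of the node indexed by $(a,b,c)$ is $(0,b,0,0,c)$, from which one recovers $b=\wt_3$, $c=\wt_6$, and then $a=s-b-c$. Thus the triple, and therefore the node, is determined by its $K$-weight, which proves the lemma for $r=1$. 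The underlying reason is simply that the three vertex $K$-weights are affinely independent on the hyperplane $a+b+c=s$.

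For $r=6$ I would run the identical argument after replacing $B(\Lambda_1)$ by its dual $B(\Lambda_6)$; the duality reverses arrows and negates weights, so the relevant composition graph and the vertex $K$-weights are produced by the corresponding finite computation, and affine independence again yields injectivity. The only genuinely load-bearing point, which I expect to be the main thing to verify rather than a true obstacle, is that the composition graph really contains only the three vertices shown and that they are themselves $K$-highest weight in $B(\Lambda_1)$; this is the finite, terminating check of Definition~\ref{d:composition_graph}, and it guarantees that no chain can use a vertex off the displayed path. I would also be careful in the $r=6$ case to use duality rather than the order-two Dynkin automorphism interchanging $1$ and $6$, since the latter would transport $K$ to $I\setminus\{6\}$ and thereby change the statement being proved.
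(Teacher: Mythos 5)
Your proposal is correct and follows essentially the same route as the paper: both identify the $K$-highest weight nodes with chains in the composition graph of Figure~\ref{f:L1_01_hw_graph} via Lemma~\ref{l:cg}, write them as $\bar{0}1^{\otimes a}\otimes\bar{0}\bar{1}3^{\otimes b}\otimes\bar{1}6^{\otimes c}$, and recover $(a,b,c)$ from the $\{3,6\}$-weight together with $s=a+b+c$ (and dually for $r=6$). Your explicit weight computation and the caution about using duality rather than the diagram automorphism for $r=6$ are just slightly more detailed versions of what the paper leaves implicit.
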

\begin{proof}
The composition graph for the $K$-highest weight nodes for $B(\Lambda_1)$ is shown in
Figure~\ref{f:L1_01_hw_graph}.  Therefore, by Lemma~\ref{l:cg} all of the $K$-highest weight 
nodes in $B(s\Lambda_1)$ are of the form 
\[ \bar{0}1^{\otimes a} \otimes \bar{0}\bar{1}3^{\otimes b} \otimes \bar{1}6^{\otimes c} \]
and these nodes are all distinguished by their \{3, 6\}-weight together with
$s = a+b+c$.

Similarly, the $K$-highest weight nodes for $B(s\Lambda_6)$ are of the form
\begin{equation*}
\bar{0}6^a \otimes \bar{0} \bar{1}2^b \otimes \bar{1}0^c
\end{equation*}
which are also distinguished by their $K$-weight for fixed $s=a+b+c$.
\end{proof}

\bigskip

\begin{figure}[ht]
\begin{tabular}{c}
\xymatrix{ 
\bar{0}1 \ar[r] \ar@(ul,ur) & \bar{0}\bar{6}2 \ar[r] \ar@(ul,ur) & \bar{6}0 \ar@(ul,ur) \\
}
\end{tabular}
\caption{Composition graph for $I \setminus \{6\}$-highest weight nodes in $B(\Lambda_1)$}
\label{f:L1_06_hw_graph}
\end{figure}
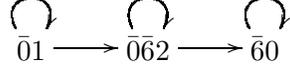

\begin{theorem} \label{t:E6_16}
Let $r\in \{1,6\}$ and $s\ge 1$.  There exists a unique twisted isomorphism $p : B(s\Lambda_r)
\to B(s\Lambda_r)$ of order three, such that node $b\in B(s\Lambda_r)$ is mapped to
node $p(b)$ with affine level-0 weight $\p(\wt(b))$.  
\end{theorem}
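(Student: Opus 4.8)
The plan is to build $p$ from the bottom up: realize $B(s\Lambda_r)$ inside $B(\Lambda_r)^{\otimes s}$ as in Lemma~\ref{l:weakly_increasing}, construct the twisted isomorphism on the single minuscule factor $B(\Lambda_r)$, and then extend it factorwise. The order-three and weight conditions are then inherited cleanly from the fundamental level, and the composition graphs of Figures~\ref{f:L1_01_hw_graph} and~\ref{f:L1_06_hw_graph} (together with Lemma~\ref{lemma:mult free}) supply the multiplicity-freeness needed for uniqueness.

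First I would treat the case $s=1$. Using the level-0 bookkeeping of Remark~\ref{r:reality}, each of the $27$ nodes of $B(\Lambda_r)$ carries a distinct affine level-0 weight, and I would check that the set of these weights is invariant under $\p$ (a finite verification on Figure~\ref{fig:L1} and its dual). Because the weights are distinct, there is then a unique bijection $p_0 : B(\Lambda_r) \to B(\Lambda_r)$ with $\wt(p_0(b)) = \p(\wt(b))$. Since $B(\Lambda_r)$ is minuscule, every $f_i$ and $e_i$ is determined by weight, so the identity $\p(\wt(b) - \alpha_i) = \p(\wt(b)) - \alpha_{\p(i)}$ forces $p_0 \circ f_i = f_{\p(i)} \circ p_0$ and $p_0 \circ e_i = e_{\p(i)} \circ p_0$ for $i \in \{1,2,3,4,5\}$, so $p_0$ is a twisted isomorphism in the sense of Definition~\ref{d:ti}; and $p_0^3 = \mathrm{id}$ because $\p^3 = \mathrm{id}$ and the weights are distinct. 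I expect the invariance of the weight set under $\p$ to be the \emph{main structural obstacle}, since it is precisely the statement that the minuscule crystal ``closes up'' under the order-three promotion direction; everything else at this level is automatic from minusculity.

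Next I would extend to general $s$ by acting factorwise, setting $p(b_1 \otimes \cdots \otimes b_s) = p_0(b_1) \otimes \cdots \otimes p_0(b_s)$ on $B(\Lambda_r)^{\otimes s}$. Since $p_0$ matches $\varphi_i,\varepsilon_i$ after relabeling by $\p$ (Remark~\ref{r:consequences}(2)), the reduced $i$-signature of $b_1 \otimes \cdots \otimes b_s$ coincides with the reduced $\p(i)$-signature of its image, so the tensor product rule gives $p \circ f_i = f_{\p(i)} \circ p$ and the analogous relation for $e_i$ for all $i \in \{1,2,3,4,5\}$; factorwise, $p^3 = \mathrm{id}$ is immediate, and the affine level-0 weight is additive over tensor factors, so $\wt(p(b)) = \p(\wt(b))$. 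It remains to see that $p$ restricts to $B(s\Lambda_r)$: by Proposition~\ref{p:L1} the pairwise weakly increasing condition records exactly the reachability order on $B(\Lambda_r)$, and $p_0$, being an edge-relabeling graph automorphism, preserves this order; hence $p$ sends weakly increasing tensors to weakly increasing tensors and therefore carries $B(s\Lambda_r)$ onto itself. This produces a twisted isomorphism $p : B(s\Lambda_r) \to B(s\Lambda_r)$ of order three with the required weight property, matching the hypotheses of Theorem~\ref{t:main}.

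Finally, for uniqueness I would show that any two candidates $p,p'$ agree. The relations $p \circ f_i = f_{\p(i)} \circ p$ and $p' \circ f_i = f_{\p(i)} \circ p'$ for $i \in \{1,2,3,4,5\}$ (using $\p(1) = 6$, so that $(p')^{-1} \circ f_6 = f_1 \circ (p')^{-1}$) imply that $\psi = (p')^{-1} \circ p$ commutes with $f_1, \ldots, f_5$ and $e_1, \ldots, e_5$ and preserves weight. Thus $\psi$ permutes the $\{1,2,3,4,5\}$-highest weight nodes while fixing their weights; these nodes are distinguished by weight, as recorded by the composition graph of Figure~\ref{f:L1_06_hw_graph} (the $\{1,2,3,4,5\}$-analogue of Lemma~\ref{lemma:mult free}), so $\psi$ fixes each of them. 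As every node of $B(s\Lambda_r)$ is obtained from a $\{1,2,3,4,5\}$-highest weight node by applying $f_1, \ldots, f_5$, we conclude $\psi = \mathrm{id}$ and hence $p = p'$.
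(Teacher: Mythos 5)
Your $s=1$ construction is fine (minusculity does make $p_0$ the unique weight-matching bijection, a twisted isomorphism of order three), and your uniqueness argument is essentially the paper's: both reduce to the fact that the $I\setminus\{6\}$-highest weight nodes are determined by their affine level-0 weight. The gap is in the extension to $s\geq 2$: the factorwise map $p_0^{\otimes s}$ does \emph{not} stabilize $B(s\Lambda_r)$ inside $B(\Lambda_r)^{\otimes s}$. The reason is that $p_0$ is only a \emph{twisted} isomorphism: by Definition~\ref{d:ti} it intertwines $f_i$ with $f_{\p(i)}$ only for $i\in I\setminus\{\p^{-1}(0)\}=\{1,2,3,4,5\}$. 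It sends $6$-edges of $B(\Lambda_1)$ to $0$-edges (which are absent from the classical graph) and manufactures the $6$-edges of the image out of the $1$-edges of the source, so it is not an automorphism of the full classical crystal graph and does not preserve the reachability order of Proposition~\ref{p:L1}. Concretely, take $b_1\otimes b_2=\bar{0}1\otimes\bar{6}0\in B(2\Lambda_1)$ (every node is reachable from the highest weight node $\bar{0}1$, so this is weakly increasing). Since $\p$ sends $0\mapsto 1\mapsto 6\mapsto 0$, the weight-matching gives $p_0(\bar{0}1)=\bar{1}6$ and $p_0(\bar{6}0)=\bar{0}1$, so your map sends this node to $\bar{1}6\otimes\bar{0}1$, which is \emph{not} in $B(2\Lambda_1)$: $\bar{0}1$ has no incoming arrows and hence is not reachable from $\bar{1}6$. (Indeed the correct $p$ of the theorem sends $\bar{0}1\otimes\bar{6}0$ to $\bar{0}1\otimes\bar{1}6$, which is not of product form, so no factorwise definition can work.)

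Because the existence construction fails, the order-three property --- which you obtain ``for free'' from factorwise action --- is also unproven. The paper instead defines $p$ directly on the $I\setminus\{6\}$-highest weight nodes: using the composition graphs of Figures~\ref{f:L1_01_hw_graph} and~\ref{f:L1_06_hw_graph} it shows that both the $I\setminus\{6\}$- and the $I\setminus\{1\}$-highest weight nodes of $B(s\Lambda_1)$ form one-parameter-per-weight families determined by their affine level-0 weights, matches them by solving $\wt(p(b))=\p(\wt(b))$ (a linear system with the unique solution $a'=c$, $b'=b$, $c'=a$), extends by Equation~(\ref{e:twist}), and then verifies order three by explicitly tracking these weights through a third family of $\widetilde{I}\setminus\{1,6\}$-highest weight nodes. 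Some version of that weight-chasing through the highest weight nodes is unavoidable; your proposal is missing it.
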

\begin{proof}
We state the proof for $r=1$. The proof for $r=6$ is analogous.

By constructing the composition graph shown in Figure~\ref{f:L1_06_hw_graph} and
applying Lemma~\ref{l:cg}, the $I \setminus \{6\}$-highest weight nodes of $B(s
\Lambda_1)$ all have the form 
\[ \bar{0}1^{\otimes a} \otimes \bar{0}\bar{6}2^{\otimes b} \otimes \bar{6}0^{\otimes c}. \]
All of these nodes are uniquely determined by their affine level-0 weight
\[ (c-b-a) \Lambda_0 + a \Lambda_1 + b \Lambda_2 - (b+c) \Lambda_6. \]
Any twisted isomorphism $p$ induced by $\p$ must send such a node to one which is $I
\setminus \{1\}$-highest weight, with affine level-0 weight 
\[ (c-b-a) \Lambda_1 + a \Lambda_6 + b \Lambda_3 - (b+c) \Lambda_0. \]
As we have seen in the proof of Lemma~\ref{lemma:mult free}, the $I \setminus
\{1\}$-highest weight nodes all have the form
\[ \bar{0}1^{\otimes a'} \otimes \bar{0}\bar{1}3^{\otimes b'} \otimes \bar{1}6^{\otimes c'} \]
and are all uniquely determined by their affine level-0 weight
\[ -(a'+b') \Lambda_0 + (a'-b'-c') \Lambda_1 + b' \Lambda_3 + c' \Lambda_6. \]
This system has the unique solution
\[ a' = c, b' = b, c' = a, \]
and we can extend by Equation~(\ref{e:twist}) to define $p$ on all of $B(s
\Lambda_1)$. 

If we apply $p$ again, we send the $I \setminus \{1\}$-highest weight nodes to
$\widetilde{I} \setminus \{6,1\}$-highest weight nodes with affine level-0
weight $-(a'+b') \Lambda_1 + (a'-b'-c') \Lambda_6 + b' \Lambda_5 + c'
\Lambda_0$.  This is accomplished by sending $\bar{0}1^{\otimes a'} \otimes
\bar{0}\bar{1}3^{\otimes b'} \otimes \bar{1}6^{\otimes c'}$ to
$\bar{1}6^{\otimes a'} \otimes \bar{1}\bar{6}5^{\otimes b'} \otimes
\bar{6}0^{\otimes c'}$.  Finally, observe that $p$ sends these $\widetilde{I}
\setminus \{6,1\}$-highest weight nodes to $I \setminus \{6\}$-highest weight
nodes with weight $-(a'+b') \Lambda_6 + (a'-b'-c') \Lambda_0 + b' \Lambda_2 + c'
\Lambda_1.$ Therefore, the twisted isomorphism $p$ has order three.
\end{proof}

\begin{corollary} \label{c:E6_16}
Let $s\ge 1$.  The twisted isomorphism $p$ of
Theorem~\ref{t:E6_16} defines a combinatorial affine crystal structure
$\widetilde{B(s\Lambda_1)}$ on $B(s\Lambda_1)$.  Moreover, if we restrict the
arrows in $\widetilde{B(s\Lambda_1)}$ to $J$, which we denote by
$\widetilde{B(s\Lambda_1)}|_J$, then 
\begin{equation} \label{e:Btilde}
 \widetilde{B(s\Lambda_1)}|_J \cong B(s \Lambda_6).
\end{equation}
\end{corollary}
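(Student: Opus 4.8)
The first assertion is immediate. Theorem~\ref{t:E6_16} provides a twisted isomorphism $p : B(s\Lambda_1) \to B(s\Lambda_1)$ of order three induced by $\p$, and since $\p(1) = 6$ the twist relation~\eqref{e:twist} gives $p \circ f_1 = f_6 \circ p$. Thus the hypotheses of Theorem~\ref{t:main} are met, and $\widetilde{B(s\Lambda_1)}$, with $f_0 = p^2 \circ f_1 \circ p$, is a combinatorial affine structure on $B(s\Lambda_1)$.

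For the isomorphism~\eqref{e:Btilde}, the plan is to upgrade $p$ from a twisted isomorphism of the classical crystal to a symmetry of the full affine crystal that rotates \emph{every} arrow label by $\p$. I already have $p \circ f_i = f_{\p(i)} \circ p$ for $i \in \{1,2,3,4,5\}$ directly from~\eqref{e:twist}. The only missing relation is $p \circ f_6 = f_0 \circ p$ (note $\p(6) = 0$), which I would obtain by a one-line manipulation as in the derivation of~\eqref{e:as}: from $p \circ f_1 = f_6 \circ p$, the definition $f_0 = p^2 \circ f_1 \circ p$, and $p^3 = \mathrm{id}$, one computes $p \circ f_6 = p^2 \circ f_1 \circ p^2 = f_0 \circ p$. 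This yields $p \circ f_i = f_{\p(i)} \circ p$ for every $i \in I$.

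Next I would exploit that $\p$ carries the classical index set $I = \{1,\ldots,6\}$ bijectively onto $J = \{0,2,3,4,5,6\}$. Consequently the single bijection $p$ on nodes is simultaneously a graph isomorphism from $B(s\Lambda_1)$, equipped with its $I$-arrows, onto $\widetilde{B(s\Lambda_1)}|_J$, equipped with its $J$-arrows, sending each $i$-arrow to a $\p(i)$-arrow. Since $B(s\Lambda_1)$ is connected with a unique highest weight node $u$ of weight $s\Lambda_1$, its image $\widetilde{B(s\Lambda_1)}|_J$ is connected with a unique highest weight node $p(u)$, and by Remark~\ref{r:consequences}(2) the $J$-weight of $p(u)$ is $s\Lambda_{\p(1)} = s\Lambda_6$ in the $J$-indexing. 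Identifying the $J$-subdiagram of $E_6^{(1)}$ with the standard $E_6$ diagram via the unique Dynkin isomorphism fixing node $6$ (explicitly $0 \mapsto 1,\ 2 \mapsto 3,\ 3 \mapsto 2,\ 4 \mapsto 4,\ 5 \mapsto 5,\ 6 \mapsto 6$), this highest weight becomes $s\Lambda_6$ in the standard labeling, and the characterization of highest weight crystals then gives $\widetilde{B(s\Lambda_1)}|_J \cong B(s\Lambda_6)$.

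The point demanding care is the choice of identification of the $J$-subdiagram with $E_6$: because $E_6$ carries the nontrivial diagram automorphism interchanging $1 \leftrightarrow 6$ and $3 \leftrightarrow 5$, there are two admissible identifications, and only the one fixing node $6$ yields $B(s\Lambda_6)$ rather than $B(s\Lambda_1)$. I would pin this down using the computed $J$-weight of $p(u)$, so that the resulting crystal is genuinely the dual $B(s\Lambda_6)$, reflecting the duality $\Lambda_1 \leftrightarrow \Lambda_6$. A secondary check, that the restriction $\widetilde{B(s\Lambda_1)}|_J$ is connected, is not automatic for restrictions of crystals to a subset of arrows but follows here precisely because $p$ is a global graph isomorphism onto it, so connectivity is transported from $B(s\Lambda_1)$.
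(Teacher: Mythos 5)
Your proof is correct, and for the isomorphism~\eqref{e:Btilde} it takes a genuinely different (and arguably cleaner) route than the paper. The paper proves~\eqref{e:Btilde} by direct combinatorics: every $J$-highest weight node is in particular $I\setminus\{1\}$-highest weight, hence of the form $\bar{0}1^{\otimes a'}\otimes\bar{0}\bar{1}3^{\otimes b'}\otimes\bar{1}6^{\otimes c'}$ by the composition graph, and the level-$0$ condition $\wt_0\ge 0$ forces $a'=b'=0$, leaving the single node $\bar{1}6^{\otimes s}$ of $J$-weight $s\Lambda_6$. You instead derive the missing intertwining relation $p\circ f_6=p^2\circ f_1\circ p^2=f_0\circ p$ from $p^3=\mathrm{id}$ (the same manipulation that produces~\eqref{e:as} in the proof of Theorem~\ref{t:main}), so that $p$ becomes a color-permuting graph isomorphism from $B(s\Lambda_1)$ with its $I$-arrows onto $\widetilde{B(s\Lambda_1)}|_J$, and then transport the unique highest weight node and its weight through $p$ using Remark~\ref{r:consequences}(2). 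What your approach buys is that connectedness and the fact that $\widetilde{B(s\Lambda_1)}|_J$ is an honest classical $E_6$-crystal come for free from the isomorphism, whereas the paper's weight computation identifies the unique $J$-highest weight node but leaves those points implicit; you also make explicit the choice of identification of the $J$-subdiagram with the standard $E_6$ diagram (the one fixing node $6$), which the paper silently assumes and which is exactly what distinguishes the answer $B(s\Lambda_6)$ from $B(s\Lambda_1)$. What the paper's computation buys is the explicit description of the $J$-highest weight node as $\bar{1}6^{\otimes s}$, which is reused later (e.g.\ in the corollary identifying $\widetilde{B(s\Lambda_1)}$ with $B^{1,s}$). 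Both arguments are sound.
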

The analogue of Corollary~\ref{c:E6_16} for $B(s\Lambda_6)$ also exists.
\begin{proof}
Since $p$ of Theorem~\ref{t:E6_16} has order three, it defines a combinatorial
affine structure on $B(s \Lambda_1)$ by Theorem~\ref{t:main}.  

Any $J$-highest weight node $b$ must also be an $I \setminus \{1\}$-highest
weight node, and these all have the form
\[ \bar{0}1^{\otimes a'} \otimes \bar{0}\bar{1}3^{\otimes b'} \otimes \bar{1}6^{\otimes c'} \]
with affine level-0 weight
\[ -(a'+b') \Lambda_0 + (a'-b'-c') \Lambda_1 + b' \Lambda_3 + c' \Lambda_6. \]
If we further require that $\wt_0(b) \geq 0$, then we see that $a'$ and $b'$ must
be 0.  Hence, $b = \bar{1}6^{\otimes s}$ with $J$-weight $s\Lambda_6$.
\end{proof}

\begin{theorem}\label{t:16unique}
Let $\widetilde{B}, \widetilde{B'}$ be two affine type $E_6^{(1)}$ crystals. Suppose there exists
a $\{1,2,3,4,5,6\}$-isomorphism $\Psi_0$ and a $\{0,2,3,4,5,6\}$-isomorphism $\Psi_1$ where
\begin{equation} \label{eq:isom}
\begin{split}
    \widetilde{B}|_{\{1,2,3,4,5,6\}} &\stackrel{\Psi_0}{\longrightarrow} \widetilde{B'}|_{\{1,2,3,4,5,6\}} \cong B(s\Lambda_1), \\ 
    \widetilde{B}|_{\{0,2,3,4,5,6\}} &\stackrel{\Psi_1}{\longrightarrow} \widetilde{B'}|_{\{0,2,3,4,5,6\}} \cong B(s\Lambda_6).
\end{split}
\end{equation}
Then $\Psi_0(b)=\Psi_1(b)$ for all $b\in \widetilde{B}$ and so there exists an
$\{0,1,2,3,4,5,6\}$-isomorphism $\Psi: \widetilde{B} \cong \widetilde{B'}$.
\end{theorem}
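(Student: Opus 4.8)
The plan is to show that the two given partial isomorphisms actually coincide pointwise, i.e. that $\Psi_0(b)=\Psi_1(b)$ for every $b\in\widetilde{B}$; once this is established, the common map $\Psi:=\Psi_0=\Psi_1$ commutes with $e_i,f_i$ for $i\in\{1,2,3,4,5,6\}$ (because it equals $\Psi_0$) and for $i\in\{0,2,3,4,5,6\}$ (because it equals $\Psi_1$), hence for all $i\in\{0,1,2,3,4,5,6\}$, so it is the desired full affine isomorphism. Write $K=\{2,3,4,5,6\}$. Since both $\Psi_0$ and $\Psi_1$ are in particular $K$-isomorphisms, each commutes with $e_i$ and $f_i$ for $i\in K$ and so maps each $K$-connected component of $\widetilde{B}$ bijectively onto a $K$-connected component of $\widetilde{B'}$. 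Every node of $\widetilde{B}$ lies in a unique $K$-component, which contains a single $K$-highest weight node from which the whole component is reached by applying the $f_i$ with $i\in K$. It therefore suffices to prove that $\Psi_0$ and $\Psi_1$ agree on all $K$-highest weight nodes of $\widetilde{B}$, because agreement then propagates along the $f_i$-arrows throughout each component.

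The crux is this agreement on $K$-highest weight nodes, and it is exactly here that Lemma~\ref{lemma:mult free} enters. Let $b$ be a $K$-highest weight node of $\widetilde{B}$. Because $\Psi_0$ and $\Psi_1$ each commute with the $e_i$ for $i\in K$, both $\Psi_0(b)$ and $\Psi_1(b)$ are $K$-highest weight nodes of $\widetilde{B'}$; and because each commutes with $e_i,f_i$ for $i\in K$, each preserves the $K$-weight, so $\Psi_0(b)$ and $\Psi_1(b)$ carry the same $K$-weight, namely that of $b$. The first identification in~\eqref{eq:isom} realizes $\widetilde{B'}|_{\{1,2,3,4,5,6\}}$ as $B(s\Lambda_1)$, so by Lemma~\ref{lemma:mult free} the $K$-highest weight nodes of $\widetilde{B'}$ are distinguished by their $K$-weights. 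Two such nodes with equal $K$-weight must thus coincide, giving $\Psi_0(b)=\Psi_1(b)$.

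Combining the two steps, $\Psi_0$ and $\Psi_1$ agree on every $K$-highest weight node and hence, after propagating through each $K$-component, on all of $\widetilde{B}$; setting $\Psi=\Psi_0=\Psi_1$ produces the claimed $\{0,1,2,3,4,5,6\}$-isomorphism. The main obstacle I anticipate is precisely the one circumvented here: for $s\ge 2$ the nodes of $\widetilde{B'}$ are not separated by weight, so one cannot simply argue that two affine-weight-preserving maps must agree; the decisive observation is that one only needs the $K$-highest weight nodes to be separated by weight, and Lemma~\ref{lemma:mult free} guarantees exactly that. The remaining points are routine and I would only verify them in passing: that a $K$-isomorphism sends $K$-highest weight nodes to $K$-highest weight nodes and preserves $K$-weight, and that reachability within a $K$-component lets agreement at the component's source propagate to every node.
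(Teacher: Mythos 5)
Your proof is correct and follows essentially the same route as the paper: reduce to the $K$-highest weight nodes via propagation of agreement along $K$-components, then invoke Lemma~\ref{lemma:mult free} to conclude that the two images, having equal $K$-weight, must coincide. The only cosmetic difference is that the paper appeals to Remark~\ref{r:reality} for full weight preservation, whereas you observe (correctly) that preservation of the $K$-weight alone, which follows directly from $\Psi_0,\Psi_1$ being $K$-isomorphisms, already suffices.
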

\begin{proof}
Set $K=\{2,3,4,5,6\}$. Note that if $\Psi_0(b)=\Psi_1(b)$ for a $b$ in a given
$K$-component $\mathcal{C}$, then $\Psi_0(b')=\Psi_1(b')$ for all $b'\in
\mathcal{C}$ since $e_i\Psi_0(b')=\Psi_0(e_ib')$ and
$e_i\Psi_1(b')=\Psi_1(e_ib')$ for $i\in K$.  Furthermore, observe that $\Psi_0$
and $\Psi_1$ preserve weights by Remark~\ref{r:reality}.  That is,
$\wt(b)=\wt(\Psi_0(b))=\wt(\Psi_1(b))$ for all $b\in \widetilde{B}$.

Since $e_i$ commutes with $\Psi_0$ and $\Psi_1$ for $i\in K$, it follows that
$K$-components in $\widetilde{B}$ must map to $K$-components in
$\widetilde{B'}$.  Restricted to $I$ or $J$, the images of the $K$-components
in $\widetilde{B}$ are also isomorphic to $K$-components in $B(s\Lambda_1)$
under $\Psi_0$ and to $K$-components in $B(s\Lambda_6)$ under $\Psi_1$.
However, the $K$-highest weight elements in $B(s\Lambda_1)$ and $B(s\Lambda_6)$
are determined by their weights by Lemma~\ref{lemma:mult free}. Hence we must
have $\Psi_0(b)=\Psi_1(b)$ for all $b\in \widetilde{B}$.
\end{proof}

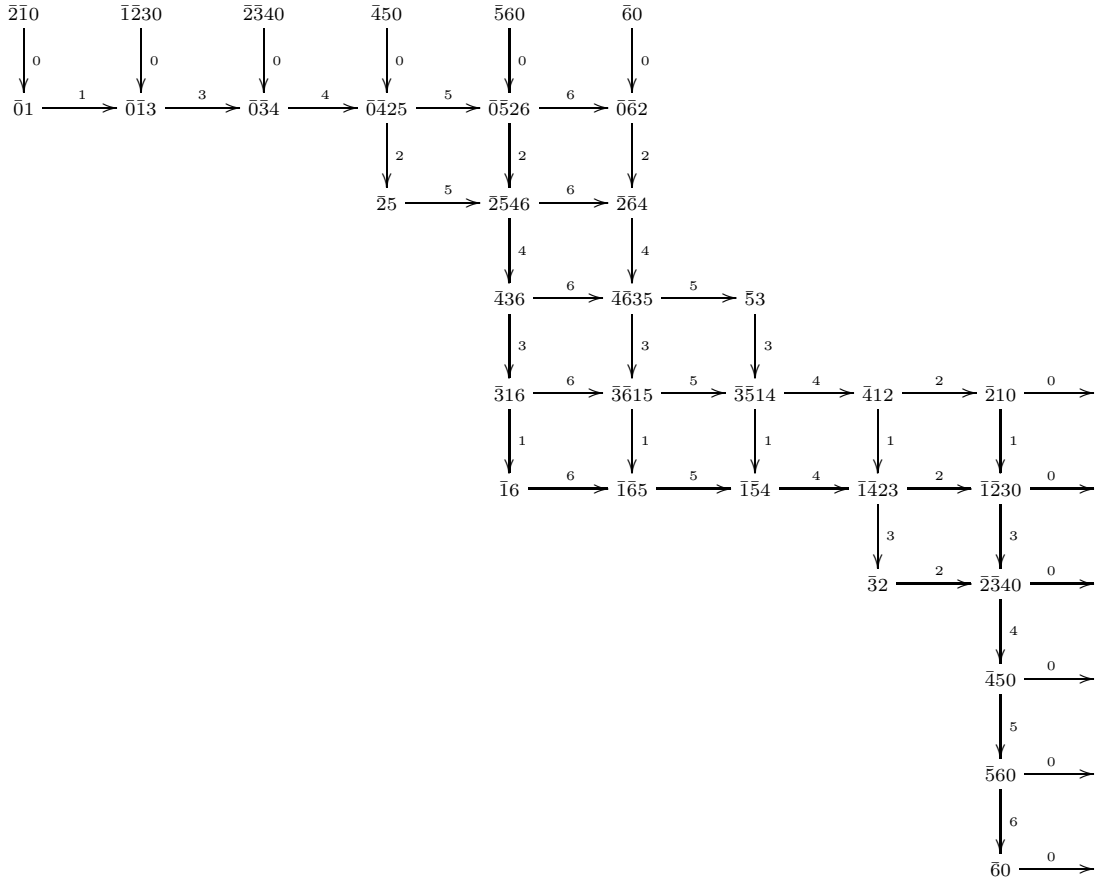
\begin{figure}[ht]
\begin{center}
\tiny{ \xymatrix{
\bar{2}\bar{1}0 \ar[d]^0 & \bar{1}\bar{2}30 \ar[d]^0 & \bar{2}\bar{3}40 \ar[d]^0 & \bar{4}50 \ar[d]^0  & \bar{5}60 \ar[d]^0 & \bar{6}0 \ar[d]^0 \\
\bar{0}1 \ar[r]^1 & \bar{0}\bar{1}3 \ar[r]^3 & \bar{0}\bar{3}4 \ar[r]^4 & \bar{0}\bar{4}25 \ar[r]^5 \ar[d]^2 & \bar{0}\bar{5}26 \ar[r]^6 \ar[d]^2 & \bar{0}\bar{6}2 \ar[d]^2 \\
                  &                          &                          & \bar{2}5 \ar[r]^5                  & \bar{2}\bar{5}46 \ar[r]^6 \ar[d]^4 & \bar{2}\bar{6}4 \ar[d]^4 \\
                  &                          &                          &                                    & \bar{4}36 \ar[r]^6 \ar[d]^3        & \bar{4}\bar{6}35 \ar[r]^5 \ar[d]^3 & \bar{5}3 \ar[d]^3  \\
                  &                          &                          &                                    & \bar{3}16 \ar[r]^6 \ar[d]^1        & \bar{3}\bar{6}15 \ar[r]^5 \ar[d]^1 & \bar{3}\bar{5}14 \ar[r]^4 \ar[d]^1 & \bar{4}12 \ar[r]^2 \ar[d]^1 & \bar{2}10 \ar[d]^1 \ar[r]^0 &  \\
                  &                          &                          &                                    & \bar{1}6 \ar[r]^6                  & \bar{1}\bar{6}5 \ar[r]^5           & \bar{1}\bar{5}4 \ar[r]^4           & \bar{1}\bar{4}23 \ar[r]^2 \ar[d]^3 & \bar{1}\bar{2}30 \ar[d]^3 \ar[r]^0 & \\
                  &                          &                          &                                    &                                    &                                    &                                    & \bar{3}2 \ar[r]^2           & \bar{2}\bar{3}40 \ar[d]^4 \ar[r]^0 & \\
                  &                          &                          &                                    &                                    &                                    &                                    &                             & \bar{4}50 \ar[d]^5 \ar[r]^0 & \\
                  &                          &                          &                                    &                                    &                                    &                                    &                             & \bar{5}60 \ar[d]^6 \ar[r]^0 & \\
                  &                          &                          &                                    &                                    &                                    &                                    &                             & \bar{6}0  \ar[r]^0 & \\
} }
\end{center}
\caption{Crystal graph for $B^{1,1}$ of type $E_6^{(1)}$}\label{f:aBL1}
\end{figure}

\begin{corollary}
For $r \in \{1, 6\}$ and $s\ge 1$, the combinatorial affine structure $\widetilde{B(s\Lambda_r)}$
of Corollary~\ref{c:E6_16} is isomorphic to the Kirillov--Reshetikhin crystal $B^{r,s}$.
\end{corollary}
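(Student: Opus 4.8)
The plan is to deduce this from the uniqueness statement Theorem~\ref{t:16unique}, applied with $\widetilde{B} = \widetilde{B(s\Lambda_r)}$ (our constructed combinatorial affine structure) and $\widetilde{B'} = B^{r,s}$ (the genuine Kirillov--Reshetikhin crystal). I would treat $r=1$ in detail, since the case $r=6$ follows identically using the analogue of Corollary~\ref{c:E6_16} for $B(s\Lambda_6)$. The first step is to record that $B^{1,s}$ is an admissible input for Theorem~\ref{t:16unique}: it exists by \cite[Proposition 3.4.4]{KKMMNN:1992}, and by Kashiwara's results quoted in Remark~\ref{r:reality} it is a regular affine crystal whose nodes carry weights at level $0$, so in particular its restrictions to $I$ and to $J$ are genuine classical crystals and it is a combinatorial affine structure in the sense of Definition~\ref{d:cas}. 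Moreover $B^{1,s}|_{\{1,\dots,6\}} \cong B(s\Lambda_1)$ by the classical decomposition of \cite{Chari:2001}.

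Next I would verify the two hypotheses of Theorem~\ref{t:16unique}. For the restriction to $I=\{1,\dots,6\}$, the $I$-arrows of $\widetilde{B(s\Lambda_1)}$ are by construction those of the classical crystal $B(s\Lambda_1)$, and $B^{1,s}|_I \cong B(s\Lambda_1)$ by Chari; composing these two identifications furnishes the $\{1,\dots,6\}$-isomorphism $\Psi_0$. For the restriction to $J=\{0,2,3,4,5,6\}$, Corollary~\ref{c:E6_16} and Equation~\eqref{e:Btilde} already give $\widetilde{B(s\Lambda_1)}|_J \cong B(s\Lambda_6)$. It therefore remains only to establish the matching statement $B^{1,s}|_J \cong B(s\Lambda_6)$ for the genuine crystal, which supplies the $\{0,2,3,4,5,6\}$-isomorphism $\Psi_1$; once both are in hand, Theorem~\ref{t:16unique} produces the affine isomorphism $\widetilde{B(s\Lambda_r)} \cong B^{r,s}$.

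The main obstacle is exactly this last point, because the $0$-arrows of $B^{1,s}$ are additional data that cannot be read off its classical restriction $B(s\Lambda_1)$ alone, so $B^{1,s}|_J$ is not determined by the $I$-decomposition. The plan is to invoke the Dynkin-diagram symmetry of Kirillov--Reshetikhin crystals: the order-three automorphism $\p$ of $E_6^{(1)}$ induces a twisted automorphism of $B^{1,s}$ (cf.\ \cite[Theorem 6.1]{KMOY:2007}), which by Equation~\eqref{e:twist} intertwines the $I\setminus\{6\}$-arrows with the $J\setminus\{0\}$-arrows. Tracing $\p$ through the identification of the $J$-subdiagram with the standard $E_6$ diagram, under which the $J$-node $6$ plays the role of the distinguished fundamental node, the $J$-highest weight node of $B^{1,s}$ acquires $J$-weight $s\Lambda_6$; hence $B^{1,s}|_J \cong B(s\Lambda_6)$ by the same level-$0$ weight computation used in the proof of Corollary~\ref{c:E6_16}.

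With both restrictions identified, Theorem~\ref{t:16unique} yields a full $\{0,1,\dots,6\}$-isomorphism, completing the argument; the corollary for $r=6$ follows verbatim with the roles of $\Lambda_1$ and $\Lambda_6$ exchanged. I expect the only genuinely delicate part of writing this out to be pinning down the identification of the $J$-subdiagram with standard $E_6$ coordinates so that the two occurrences of $B(s\Lambda_6)$ (from Corollary~\ref{c:E6_16} and from the $\p$-symmetry of $B^{1,s}$) literally agree, rather than agreeing only up to the order-two automorphism of the $E_6$ diagram.
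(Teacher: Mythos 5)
Your overall strategy -- feed $\widetilde{B(s\Lambda_r)}$ and $B^{r,s}$ into the uniqueness statement of Theorem~\ref{t:16unique}, with the classical decomposition of \cite{Chari:2001} supplying $\Psi_0$ and the identification $\widetilde{B(s\Lambda_1)}|_J \cong B(s\Lambda_6)$ of Corollary~\ref{c:E6_16} supplying half of $\Psi_1$ -- is exactly the paper's, and you correctly isolate the one remaining obstacle: showing $B^{1,s}|_J \cong B(s\Lambda_6)$ for the \emph{genuine} Kirillov--Reshetikhin crystal. The problem is how you discharge that obstacle. You invoke ``the Dynkin-diagram symmetry of Kirillov--Reshetikhin crystals,'' asserting that the order-three automorphism $\p$ induces a twisted automorphism of $B^{1,s}$ and citing \cite[Theorem~6.1]{KMOY:2007}. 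That theorem is a uniqueness statement for combinatorial affine structures with a prescribed classical decomposition (the paper uses it only for the adjoint case $r=2$); it says nothing about diagram automorphisms acting on $B^{1,s}$. More seriously, the symmetry you want is not available by soft functoriality: $\p$ sends $0 \mapsto 1 \mapsto 6 \mapsto 0$, so it does not preserve the set of classical Dynkin nodes, and twisting the KR module $W^{1,s}$ by the induced automorphism of $U_q'(\mathfrak{g})$ does not obviously return a module isomorphic to $W^{1,s}$. Indeed, the existence of an order-three twisted automorphism on $B^{1,s}$ is essentially the content the paper labors to construct combinatorially in Theorem~\ref{t:E6_16}; assuming it for the genuine crystal at this point is close to circular.

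The paper closes the gap much more cheaply: since $B^{1,s}\cong B(s\Lambda_1)$ is irreducible as a classical crystal and all its weights are known, the element of level-$0$ weight $s(\Lambda_6-\Lambda_1)$ exists and is the $J$-highest weight vector of $J$-weight $s\Lambda_6$; irreducibility (and the equality of cardinalities $|B(s\Lambda_1)|=|B(s\Lambda_6)|$) then forces $B^{1,s}|_J \cong B(s\Lambda_6)$. This uses only the classical decomposition and the level-$0$ weight convention of Remark~\ref{r:reality} -- no symmetry of $B^{1,s}$ is needed. I would recommend replacing your third paragraph with an argument of this kind; as written, the step it is meant to justify does not go through from the sources you cite.
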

\begin{proof}
By~\cite{Chari:2001}, $B^{r,s}\cong B(s\Lambda_r)$ for $r=1,6$ as a classical crystal.
By~\cite[Proposition 3.4.4]{KKMMNN:1992}, $B^{r,s}$ for $r=1,6$ exists since it is irreducible 
as a classical crystal.

Let us now restrict to $r=1$ as the case $r=6$ is analogous.
To show that $B^{1,s} \cong B(s\Lambda_6)$ as a $J$-crystal, 
it suffices to show that there exists a corresponding highest weight vector since the crystal is 
irreducible. However, the element of level-0 weight $s(\Lambda_6-\Lambda_1)$
is precisely this element.

Since $B^{1,r}|_I \cong \widetilde{B(s\Lambda_1)}|_I \cong B(s\Lambda_1)$
and  $B^{1,r}|_J \cong \widetilde{B(s\Lambda_1)}|_J \cong B(s\Lambda_6)$
by the above arguments and~\eqref{e:Btilde}, by Theorem~\ref{t:16unique}
we must have $B^{1,s} \cong \widetilde{B(s\Lambda_1)}$ as affine crystals.
\end{proof}

The resulting affine crystal $B^{1,1}$ is shown in Figure~\ref{f:aBL1}.


\bigskip
\subsection{Affine structures associated to $\Lambda_2$}
\label{ss:L2}

By \cite[Proposition 3.4.5]{KKMMNN:1992}, a crystal basis $B^{2,s}$ for the
Kirillov--Reshetikhin module associated to $s \Lambda_2$ exists.
 It follows from \cite{Chari:2001} that $B^{2,s} \cong
\bigoplus_{k = 0}^s B(k \Lambda_2)$ as classical crystals.  We will refer to
$B(k \Lambda_2)$ as the \em $k$th component \em of $\bigoplus_{k = 0}^s B(k
\Lambda_2)$.  In this section, we will show how to construct a combinatorial
affine structure for $\bigoplus_{k = 0}^s B(k \Lambda_2)$ using
Theorem~\ref{t:main}.  

We use the weakly increasing characterization given in Proposition~\ref{p:L2wi}
for our work in this section.  Let $H_s^{J}$ denote the $(I \setminus
J)$-highest weight nodes of $\bigoplus_{k = 0}^s B(k \Lambda_2)$.  The
composition graphs for $J = \{6\}$ and $J = \{1\}$ are shown in
Figures~\ref{f:06_hw_graph} and \ref{f:10_hw_graph}, respectively.  Observe that
the nodes $a$ and $c$ were added to $H_1^{\{6\}}$ in the course of the algorithm
described in Definition~\ref{d:composition_graph} to obtain $G_{6}$.  The nodes
of weight 0 do not have loops by Proposition~\ref{p:L2wi}.  A finite computation
shows that the vertex sets of these composition graphs are transitively closed,
so Lemma~\ref{l:cg} models the nodes of $H_s^{\{6\}}$ and $H_s^{\{1\}}$ as
chains in $G_{6}$ and $G_{1}$, respectively.

\bigskip

\begin{figure}[ht]
\begin{tabular}{c}
\xymatrix{ 
\stackrel{2\bar{1}\bar{0} \otimes \bar{0}1}{u} \ar[r] \ar@(ul,ur) & \stackrel{36\bar{1}\bar{5} \otimes \bar{0}1}{a} \ar[r] \ar[d] \ar@(ul,ur) & \stackrel{3\bar{1}\bar{6} \otimes \bar{0}1}{b} \ar[dr] \ar@(ul,ur) & \\
& \stackrel{06\bar{2} \otimes \bar{0}\bar{5}26}{c} \ar[r] \ar@(dl,dr) & \stackrel{06\bar{2} \otimes \bar{0}\bar{6}2}{d} \ar[r] & \stackrel{05\bar{2}\bar{6} \otimes \bar{0}\bar{6}2}{e} \ar@(ul,ur) \\
}
\end{tabular}
\caption{Composition graph $G_{6}$ for $I \setminus \{6\}$-highest weight nodes}\label{f:06_hw_graph}
\end{figure}
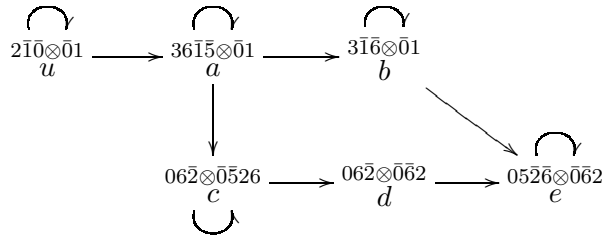

\begin{figure}[ht]
\begin{tabular}{c}
\xymatrix{ 
\stackrel{02\bar{1} \otimes \bar{0}1}{u'} \ar[r] \ar@(ul,ur) & \stackrel{5\bar{3} \otimes \bar{0}1}{a'} \ar[r] \ar[d]  \ar@(ul,ur) & \stackrel{5\bar{3} \otimes \bar{0}\bar{1}3}{b'} \ar[dr] \ar@(ul,ur) & \\
& \stackrel{01\bar{3} \otimes \bar{0}1}{c'} \ar[r] \ar@(dl,dr) & \stackrel{01\bar{3} \otimes \bar{0}\bar{1}3}{d'} \ar[r] & \stackrel{0\bar{1} \otimes \bar{0}\bar{1}3}{e'}  \ar@(ul,ur) \\
}
\end{tabular}
\caption{Composition graph $G_{1}$ for $I \setminus \{1\}$-highest weight nodes}\label{f:10_hw_graph}
\end{figure}

\bigskip

\begin{example}
We see from the composition graph that 
\[ (2\bar{1}\bar{0} \otimes \bar{0}1) \otimes (2\bar{1}\bar{0} \otimes \bar{0}1)
\otimes (06\bar{2} \otimes \bar{0}\bar{5}26) \otimes (05\bar{2}\bar{6} \otimes
\bar{0}\bar{6}2) \]
is a typical node in $H_4^{\{6\}}$.
\end{example}

\begin{definition}
Let $\mathcal{C}(m)$ denote the set
\[ \{ (L_2, L_3, L_5) \in \mathbb{Z}_{\geq 0} : L_2 + L_3 + L_5 = m \} \]
of \em weak compositions \em of $m$ into 3 parts.
\end{definition}

\begin{proposition}\label{p:weak_comp}
There is a bijection from the $I \setminus \{6\}$-highest weight nodes of
$B(k \Lambda_2)$ to $\bigcup_{m=0}^{k} \mathcal{C}(m)$ such that a node corresponding to
the weak composition $L_2 + L_3 + L_5 = m$ has $I \setminus \{6\}$-weight $L_2
\Lambda_2 + L_3 \Lambda_3 + L_5 \Lambda_5$.

In particular, the $I \setminus \{6\}$-highest weight nodes of $B(k
\Lambda_2)$ are determined by their $\{2,3,5\}$-weight, and for any such node
$b$, we have
\[ k =  \varphi_6(b) + \wt_2(b) + \wt_3(b) + \wt_5(b). \]
\end{proposition}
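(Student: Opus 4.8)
The plan is to apply Lemma~\ref{l:cg} to the composition graph $G_6$ of Figure~\ref{f:06_hw_graph}, which (being transitively closed) represents every $I\setminus\{6\}$-highest weight node of the component $B(k\Lambda_2)$ as a weakly increasing tensor product $v_1\otimes\cdots\otimes v_k$ of exactly $k$ vertices drawn from $\{u,a,b,c,d,e\}$ and lying along a directed path. First I would record the poset structure of $G_6$: its two maximal chains are $u<a<b<e$ and $u<a<c<d<e$, so the support of any chain is totally ordered and in particular cannot contain $b$ together with $c$ or $d$; moreover $d$ is the unique vertex without a loop (it is the weight-$0$ node, by Proposition~\ref{p:L2wi}), so $d$ occurs at most once while the other vertices may repeat. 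Writing $n_u,\dots,n_e$ for the multiplicities of the vertices in a chain, a size-$k$ chain is thus a choice of these multiplicities subject to $n_u+\cdots+n_e=k$, $n_d\le 1$, and the total-order condition on the support.

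Next I would compute, by a finite calculation from Figure~\ref{fig:L1} and the duality between $B(\Lambda_6)$ and $B(\Lambda_1)$, the $\{2,3,5\}$-weight and the pair $(\varphi_6,\varepsilon_6)$ of each vertex. The outcome is that the $\{2,3,5\}$-weights of $u,a,b,c,d,e$ are $\Lambda_2,\ \Lambda_3-\Lambda_5,\ \Lambda_3,\ -\Lambda_5,\ 0,\ \Lambda_5$; that $(\varphi_6,\varepsilon_6)$ equals $(0,0),(1,0),(0,1),(2,0),(1,1),(0,2)$ respectively; and that every vertex has $\wt_1=\wt_4=0$ with $\varepsilon_1=\varepsilon_4=0$. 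Reading off the $i$-signatures, the only highest-weight constraint that is not automatic is for $i=5$: the $\varepsilon_5$-contributions of the $a$'s and $c$'s (which precede every $e$ in the poset) must be canceled by the $\varphi_5$ of the $e$'s, i.e.\ $n_a+n_c\le n_e$. Since the $\{2,3,5\}$-weight of the chain is $(L_2,L_3,L_5)=(n_u,\ n_a+n_b,\ n_e-n_a-n_c)$, this is exactly $L_5\ge 0$; hence $(L_2,L_3,L_5)$ is a genuine weak composition and $m:=L_2+L_3+L_5\le k$, which establishes the weight statement and the well-definedness of the map to $\bigcup_{m=0}^{k}\mathcal{C}(m)$.

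To see that this map is a bijection I would invert it explicitly. Given $(L_2,L_3,L_5)$ with deficit $D:=k-m\ge 0$, set $n_u=L_2$; the total-order condition forces a dichotomy. If $D\le L_3$ one must use the $b$-branch, giving $n_a=D,\ n_b=L_3-D,\ n_e=L_5+D$ and $n_c=n_d=0$; if $D\ge L_3$ one must use the $c,d$-branch, giving $n_a=L_3,\ n_b=0$ and $2n_c+n_d=D-L_3$, where $n_d\le 1$ forces $n_d=(D-L_3)\bmod 2$ and $n_c=\lfloor(D-L_3)/2\rfloor$ uniquely. These two prescriptions agree at $D=L_3$, so each weak composition with $m\le k$ is the image of exactly one chain; this is the main obstacle, and it is precisely the no-repeat condition on the weight-$0$ vertex $d$ together with the incomparability of $b$ with $c,d$ that pins down the solution. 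Finally, for the identity $k=\varphi_6(b)+\wt_2(b)+\wt_3(b)+\wt_5(b)$ I would note that $\wt_2+\wt_3+\wt_5=m$ and compute the reduced $6$-signature from the tabulated $(\varphi_6,\varepsilon_6)$: ordering the factors along the poset places all $\varphi_6$-minuses (from $a$, $c$, and the single $d$) before all $\varepsilon_6$-pluses (from $b,d,e$), with no $+\,-$ pair to cancel, so $\varphi_6(b)=n_a+2n_c+n_d=D=k-m$, giving the claimed formula.
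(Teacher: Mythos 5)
Your proof is correct and follows essentially the same route as the paper's: both identify the $I\setminus\{6\}$-highest weight nodes with multiplicity vectors along the two maximal chains of $G_{6}$, solve for the multiplicities via the same dichotomy ($k-m\le L_3$ versus $k-m\ge L_3$, i.e.\ the sign of the paper's variable $b$) together with the same parity argument pinning down $n_d\in\{0,1\}$, and read off $\varphi_6=k-m$ from the $6$-signature. The only (welcome) difference is that you explicitly check, via the $5$-signature, that every chain with $L_5\ge 0$ really is an $I\setminus\{6\}$-highest weight node, a converse the paper leaves implicit in its appeal to Lemma~\ref{l:cg}.
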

\begin{proof}
By Lemma~\ref{l:cg}, the $I \setminus \{6\}$-highest weight nodes of $B(k \Lambda_2)$ 
correspond to chains of length $k$ in $G_{6}$.  Moreover, we claim
that for each value of $k$ and weak composition $L_2 + L_3 + L_5 = m$ with $0
\leq m \leq k$, there exists a unique chain of length $k$ in $G_{6}$ having $I
\setminus \{6\}$-weight $L_2 \Lambda_2 + L_3 \Lambda_3 + L_5 \Lambda_5$.

Denote the multiplicities of the vertices by $u, a, b, c, d, e$ corresponding to
the labeling in Figure~\ref{f:06_hw_graph}.  All of these multiplicities must be
nonnegative, and we also have $d \in \{0, 1\}$ by Proposition~\ref{p:L2wi}.
There are two maximal chains in $G_{6}$ and we will write a system of linear
equations for each of them.

The equations among the multiplicities that are induced by the upper maximal
chain of the graph are
\begin{align*}
L_2 &= u      && L_5 = e-a\\
L_3 &= a+b && k = u + a + b + e
\end{align*}
and we can solve these to obtain
\begin{equation*}
\begin{split}
a &= k - (L_2 + L_3 + L_5)\\
e &= k - (L_2 + L_3)\\
b &= 2 L_3 + L_5 + L_2 - k.
\end{split}
\end{equation*}
Note that $a, e \geq 0$, but $b$ may be $<0$.

The equations induced by the lower maximal chain are
\begin{align*}
L_2 &= u   && L_5 = e-c-a\\
L_3 &= a  && k= u+a+c+d+e
\end{align*}
and we can solve these to obtain
\[ 2e + d = k + L_5 - L_2 \]
which has a unique solution in nonnegative integers with $d \in \{1,0\}$, and
\[ c = e - (L_5 + L_3). \]
Now, $a, d, e \geq 0$. But $c \geq 0$ if and only if $2c \geq 0$ if and only if
\[ k + L_5 - L_2 - d - 2 (L_5 + L_3) = k - L_2 - L_5 - 2 L_3 - d \geq 0. \]
This occurs when $d = 0$ and $b \leq 0$ or when $d = 1$ and $b < 0$.  Moreover,
the solutions for the two chains in the graph agree when $b = 2 L_3 + L_5 + L_2 -
k = 0$.  
Hence, we obtain a unique solution in all cases of the parameters $k,
L_2, L_3, L_5$.

In addition, we have that $\varphi_6$ and $\varepsilon_6$ are uniquely
determined by $L_2, L_3, L_5$ and $k$.  The upper path equations give
\[ \varphi_6 = a = k - L_2 - L_3 - L_5 \text{ and } \varepsilon_6 = b+2e = k - L_2 + L_5. \]
The lower path equations give
\[ \varphi_6 = a+2c+d = L_3 + k + L_5 - L_2 - 2(L_3 + L_5) = k - L_2 - L_3 -
L_5 \text{ and } \varepsilon_6 = d+2e = k - L_2 + L_5. \]
So $\varphi_6$ and $\varepsilon_6$ agree in both cases.

Finally, $\varepsilon_i$ and $\varphi_i$ for $i=1,4$ of any solution is zero.
\end{proof}

\begin{remark}
Proposition~\ref{p:weak_comp} can also be interpreted as a branching rule from classical 
$E_6$ to $D_5$.  
\end{remark}

\begin{corollary}\label{c:L2hw}
The $I \setminus \{6\}$-highest weight nodes of $\bigoplus_{k = 0}^s B(k
\Lambda_2)$ are uniquely determined by their $\{2,3,5\}$-weight together with
$\varphi_6$.
The $I \setminus \{1\}$-highest weight nodes of $\bigoplus_{k = 0}^s B(k
\Lambda_2)$ are uniquely determined by their $\{2,3,5\}$-weight together with
$\varphi_1$.
\end{corollary}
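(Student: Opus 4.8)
The plan is to read both assertions off Proposition~\ref{p:weak_comp} and its $I \setminus \{1\}$-analogue. Proposition~\ref{p:weak_comp} already shows that \emph{within} a single component $B(k\Lambda_2)$ an $I \setminus \{6\}$-highest weight node is determined by its $\{2,3,5\}$-weight $(L_2,L_3,L_5)$, via the bijection with $\bigcup_{m=0}^{k}\mathcal{C}(m)$. The only remaining issue is how to tell apart two nodes with the same $\{2,3,5\}$-weight that happen to lie in different components of $\bigoplus_{k=0}^s B(k\Lambda_2)$, and for this the statistic $\varphi_6$ will be exactly what is needed.

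First I would record the component-recovery formula supplied by Proposition~\ref{p:weak_comp}: for an $I \setminus \{6\}$-highest weight node $b$ in $B(k\Lambda_2)$ one has $k = \varphi_6(b) + \wt_2(b) + \wt_3(b) + \wt_5(b)$. Thus the $\{2,3,5\}$-weight of $b$ together with $\varphi_6(b)$ determines $k$, hence pins $b$ down to a single component. Concretely, suppose two $I \setminus \{6\}$-highest weight nodes $b, b'$ of $\bigoplus_{k=0}^s B(k\Lambda_2)$ share the same $\{2,3,5\}$-weight and the same $\varphi_6$. The formula forces them into the same component $B(k\Lambda_2)$, and then the bijection of Proposition~\ref{p:weak_comp} identifies $b = b'$. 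This gives the first statement.

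For the second statement I would transfer the first via the order-two Dynkin diagram automorphism $\tau$ of $E_6$ that interchanges nodes $1$ and $6$ (and hence $3$ and $5$, fixing $2$ and $4$). Since $\tau(\Lambda_2) = \Lambda_2$, the induced twisted isomorphism maps each component $B(k\Lambda_2)$ to itself and carries $I \setminus \{6\}$-highest weight nodes to $I \setminus \{1\}$-highest weight nodes. Moreover $\wt_j(\tau(b)) = \wt_{\tau(j)}(b)$ and $\varphi_j(\tau(b)) = \varphi_{\tau(j)}(b)$, so $\tau$ permutes the $\{2,3,5\}$-weight among its three coordinates (swapping the $3$- and $5$-entries) and sends $\varphi_1$ to $\varphi_6$. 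Hence any two $I \setminus \{1\}$-highest weight nodes agreeing in $\{2,3,5\}$-weight and $\varphi_1$ map under $\tau$ to $I \setminus \{6\}$-highest weight nodes agreeing in $\{2,3,5\}$-weight and $\varphi_6$, which are equal by the first statement, so the original nodes coincide. Alternatively, one can rerun the two-chain linear-system computation of Proposition~\ref{p:weak_comp} verbatim on the composition graph $G_{1}$ of Figure~\ref{f:10_hw_graph}.

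Since both claims are essentially immediate consequences of Proposition~\ref{p:weak_comp}, there is no serious obstacle. The only point requiring genuine care is the symmetry transfer, namely verifying that the $\tau$-induced twisted isomorphism fixes each $B(k\Lambda_2)$ and transports the pair $(\{2,3,5\}\text{-weight},\ \varphi_6)$ to $(\{2,3,5\}\text{-weight},\ \varphi_1)$; a reader preferring to avoid the automorphism can instead solve the linear system attached to $G_{1}$ directly, at the cost of repeating a routine computation.
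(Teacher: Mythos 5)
Your treatment of the first assertion is exactly the paper's: Proposition~\ref{p:weak_comp} gives the component-recovery formula $k=\varphi_6(b)+\wt_2(b)+\wt_3(b)+\wt_5(b)$, which pins down the component, and the bijection with $\bigcup_{m=0}^{k}\mathcal{C}(m)$ then identifies the node within it. For the second assertion the paper takes the more pedestrian route: it reruns the linear systems of Proposition~\ref{p:weak_comp} on the composition graph $G_{1}$ of Figure~\ref{f:10_hw_graph}, observing that the resulting equations are those of the $G_{6}$ case with $\Lambda_3$ and $\Lambda_5$ interchanged (and noting the detail that $d'\otimes d'$ is not weakly increasing). Your primary argument instead transfers the first statement through the order-two diagram automorphism $\tau$ of $E_6$ swapping $1\leftrightarrow 6$ and $3\leftrightarrow 5$; this is legitimate, since $\tau(\Lambda_2)=\Lambda_2$ forces the induced twisted isomorphism to preserve each component $B(k\Lambda_2)$ (Remark~\ref{r:consequences}(1) and the paper's own example that this automorphism induces the dual map $B(\Lambda_1)\leftrightarrow B(\Lambda_6)$ supply the existence), and the identities $\wt_j(p(b))=\wt_{\tau(j)}(b)$, $\varphi_j(p(b))=\varphi_{\tau(j)}(b)$ carry $I\setminus\{1\}$-highest weight nodes to $I\setminus\{6\}$-highest weight nodes with matching data. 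Your symmetry argument is really the conceptual explanation of the paper's observation that the two systems of equations differ only by interchanging $\Lambda_3$ with $\Lambda_5$; it buys you a proof with no additional computation, at the cost of having to justify that the twisted isomorphism exists and fixes each component, while the paper's version stays entirely within the linear-algebra framework it has already set up. Either route is sound, and you even list the paper's route as your fallback.
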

\begin{proof}
The first statement follows directly from Proposition~\ref{p:weak_comp}, and the
second statement has an analogous proof.

The composition graph for the $I \setminus \{1\}$-highest weight nodes is
shown in Figure~\ref{f:10_hw_graph}.  Note that $d' \otimes d'$ is not weakly
increasing.  When we set up the analogous set of equations to solve for the
multiplicities $u', a', b', c', d', e'$ in terms of the parameters $k, L_2,
L_3, L_5$, we obtain equations derived from those in the proof of
Proposition~\ref{p:weak_comp} by fixing $\Lambda_2$ and interchanging
$\Lambda_3$ with $\Lambda_5$.
\end{proof}

We are now in a position to state our main result.  

\begin{theorem}\label{t:L2_main}
There exists a unique twisted isomorphism $p: \bigoplus_{k = 0}^s B(k \Lambda_2)
\rightarrow \bigoplus_{k = 0}^s B(k \Lambda_2)$ of order three.  This isomorphism
sends an $I \setminus \{6\}$-highest weight node $b$ from component $k$ to the
unique $I \setminus \{1\}$-highest weight node $b'$ in component $(s-k) +
(\wt_2(b) + \wt_3(b) + \wt_5(b))$ satisfying $\wt_{\p(i)}(b') = \wt_i(b)$ for
each $i \in \{2, 3, 5\}$.
\end{theorem}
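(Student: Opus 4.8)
The plan is to build $p$ by first prescribing it on the $I\setminus\{6\}$-highest weight nodes $H^{\{6\}}$, then extending it by the twist relations, and finally proving that the resulting bijection has order three. Since $\p^{-1}(0)=6$, Definition~\ref{d:ti} only demands $p\circ f_i=f_{\p(i)}\circ p$ for $i\in\{1,2,3,4,5\}$, so $p$ must send $I\setminus\{6\}$-highest weight nodes to $I\setminus\{1\}$-highest weight nodes because $\p(\{1,2,3,4,5\})=\{2,3,4,5,6\}$. I would first check that the rule in the statement is well defined: for $b\in H^{\{6\}}$ in component $k$ with $\{2,3,5\}$-weight of sum $\sigma=\wt_2(b)+\wt_3(b)+\wt_5(b)$, Corollary~\ref{c:L2hw} says the target $b'\in H^{\{1\}}$ is determined by its $\{2,3,5\}$-weight (the $\p$-permuted triple) together with $\varphi_1(b')$, and the prescribed component $k'=(s-k)+\sigma$, combined with the $J=\{1\}$ analogue of Proposition~\ref{p:weak_comp} established inside Corollary~\ref{c:L2hw}, forces $\varphi_1(b')=s-k$. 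Since $\sigma\le k\le s$ by Proposition~\ref{p:weak_comp}, one has $0\le k'\le s$ and the associated weak-composition data is nonnegative, so $b'$ exists and is unique; running the computation backwards shows $b\mapsto b'$ is a bijection $H^{\{6\}}\to H^{\{1\}}$.

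Next I would confirm that $p(b)=b'$ is forced to carry the full affine level-0 weight of $b$ to $\p(\wt(b))$. The $\{2,3,5\}$-part holds by construction, and because $\wt_1(b)=\wt_4(b)=0$ and $\wt_4(b')=\wt_6(b')=0$ by Proposition~\ref{p:weak_comp} and its analogue, the coordinates indexed by $\p(1)=6$ and $\p(4)=4$ match; the remaining coordinates $\wt_0$ and $\wt_6$ are then pinned down by the level-0 formula of Remark~\ref{r:reality}. With this weight compatibility in hand I would extend $p$ over all of $\bigoplus_k B(k\Lambda_2)$ by setting $p\circ f_{i_1}\cdots f_{i_m}(b)=f_{\p(i_1)}\cdots f_{\p(i_m)}(p(b))$ along each $\{1,2,3,4,5\}$-component. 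This is well defined because each such component is a $D_5$-crystal with unique highest weight node in $H^{\{6\}}$, and the matching of highest weights under $\p$ just verified identifies it, after relabeling by $\p$, with the $\{2,3,4,5,6\}=\p(\{1,2,3,4,5\})$-component of $p(b)$; since connected simply-laced crystals are determined by their highest weight, the extension is a genuine crystal isomorphism on each component, and assembling these gives a bijective twisted isomorphism.

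The main obstacle is showing that $p$ has order three, and this is exactly where the component shift $k\mapsto(s-k)+\sigma$ earns its keep: unlike the weight data, the component index is \emph{not} a function of the affine weight, so the order-three requirement is precisely what selects this shift (and dually yields uniqueness). Note also that $k\mapsto(s-k)+\sigma$ is an involution, so order three cannot come from iterating a single shift; the three legs of the cycle must carry genuinely different bookkeeping. My plan is to first show that composing the twist relations three times gives $p^3\circ f_i=f_i\circ p^3$ and $p^3\circ e_i=e_i\circ p^3$ for $i\in\{2,3,4,5\}$, which is valid because $\p$ and $\p^2$ fix $\{2,3,4,5\}$ setwise and never send an index in $\{2,3,4,5\}$ to $6$, while $\p^3=\mathrm{id}$. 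Consequently $p^3$ is a weight-preserving automorphism (Remark~\ref{r:consequences}(2) with $\p^3=\mathrm{id}$) of each $\{2,3,4,5\}$-component, which is a $D_4$-crystal. It then suffices to check that $p^3$ fixes every $\{2,3,4,5\}$-highest weight node, since an automorphism of a connected $D_4$-component that fixes its $\{2,3,4,5\}$-highest weight node is the identity on that component (every node is obtained from it by $f_2,f_3,f_4,f_5$, with which $p^3$ commutes).

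Finally, the fixed-point check on the $\{2,3,4,5\}$-highest weight nodes is the genuinely technical step. These nodes form a finite set modeled, as in Section~\ref{ss:CG}, by chains in the relevant composition graphs, and I would track an $H^{\{6\}}$ node through the three-step cycle $H^{\{6\}}\to H^{\{1\}}\to\{\,\{2,3,4,5\}\text{-highest weight}\,\}\to H^{\{6\}}$, using the weight compatibility at each stage together with the determinacy results (Corollary~\ref{c:L2hw}, Proposition~\ref{p:weak_comp}) to verify that the component bookkeeping returns each node to itself after three applications. The hard part is organizational rather than conceptual: one must show that the two distinct component-shift rules encountered on the second and third legs compose with the first to the identity, a finite verification on the composition graphs whose detailed accounting is deferred to the appendices. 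For uniqueness I would run the same argument in reverse: any order-three twisted isomorphism permutes affine weights by $\p$, hence acts on $H^{\{6\}}$ exactly by the stated $\{2,3,5\}$-weight rule, and the order-three constraint forces the component to equal $(s-k)+\sigma$, leaving no freedom.
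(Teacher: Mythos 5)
Your setup — defining $p$ on the $I \setminus \{6\}$-highest weight nodes via Corollary~\ref{c:L2hw}, checking well-definedness with Proposition~\ref{p:weak_comp} and its $\{1\}$-analogue, matching weights, and extending along the classical $\{1,2,3,4,5\}$-components — is sound and agrees with the paper. The gap is in the order-three step, and it is twofold. First, your reduction and your verification plan do not match: $p^3$ commutes only with the $\{2,3,4,5\}$-operators, so fixing $H^{\{6\}}$ pointwise (which is what tracking the cycle $H^{\{6\}} \to H^{\{1\}} \to \cdots$ would give you) only makes $p^3$ the identity on the $\{2,3,4,5\}$-saturation of $H^{\{6\}}$, which is a proper subset of $\bigoplus_k B(k\Lambda_2)$; your own reduction requires fixing \emph{every} classical $\{2,3,4,5\}$-highest weight node, a strictly larger set that neither you nor the paper parametrizes. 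The paper avoids this set entirely: its intermediate objects are the \emph{affine} $\widetilde{I}\setminus\{6,1\}$-highest weight nodes $H^{\{6,1\};0}$ (killed also by $e_0$, equivalently pruned by the level-$0$ weight condition), which is exactly what makes the composition graph $G_{6,1;0}$ of Figure~\ref{f:61_hw_graph} small enough to analyze.

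Second, the actual content of the proof is missing. The component of the intermediate node $p(b)$ for $b$ an $I\setminus\{1\}$-highest weight node in component $j$ is \emph{not} determined by weight considerations; the paper's Lemma~\ref{l:solve_equations} (proved by solving six linear systems over the maximal chains of $G_{6,1;0}$ in Appendix~\ref{s:appendix}, with uniqueness via the oriented-matroid circuit computation of Appendix~\ref{s:appendix_b}) shows that for \emph{every} $j \leq k \leq s$ there is a unique candidate in component $k$, with $\varphi_1 = k-j$. The identity $\varphi_1(p(b)) = k-j$ is then what converts injectivity of $p$, via downward induction on $j$, into the forced conclusion $k = s$ — and this single fact is simultaneously the existence of the order-three solution and its uniqueness. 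Your proposal asserts the conclusion at both ends ("the two component-shift rules compose to the identity" as "a finite verification deferred to the appendices", and "the order-three constraint forces the component to equal $(s-k)+\sigma$, leaving no freedom") without supplying the mechanism; the appendices you defer to do not perform the check you describe, and the check you describe (on classical $\{2,3,4,5\}$-highest weight nodes) would be substantially harder than the one the paper actually carries out.
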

The proof of this theorem is given at the end of this section. We first discuss
some consequences, examples, and preliminary results.

\begin{corollary} \label{c:L2_main}
The twisted isomorphism $p$ of Theorem~\ref{t:L2_main} defines a combinatorial
affine crystal structure which is isomorphic to the Kirillov--Reshetikhin crystal $B^{2,s}$.
\end{corollary}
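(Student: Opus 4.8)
The plan is to separate a quick construction step from a more delicate identification step, following the template established for $r\in\{1,6\}$. For the construction step, Theorem~\ref{t:L2_main} furnishes a twisted isomorphism $p:\bigoplus_{k=0}^s B(k\Lambda_2)\to\bigoplus_{k=0}^s B(k\Lambda_2)$ of order three. Since $p$ is induced by $\p$ and $\p(1)=6$ with $1\neq\p^{-1}(0)=6$, Equation~\eqref{e:twist} gives $p\circ f_1=f_6\circ p$. Thus both hypotheses of Theorem~\ref{t:main} hold, and defining $f_0=p^2\circ f_1\circ p$ equips $\bigoplus_{k=0}^s B(k\Lambda_2)$ with a combinatorial affine structure, which I denote $\widetilde B$. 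This is the routine part.

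For the identification step I would assemble the standard facts: $B^{2,s}$ exists by \cite[Proposition 3.4.5]{KKMMNN:1992}, decomposes classically as $B^{2,s}\cong\bigoplus_{k=0}^s B(k\Lambda_2)$ by \cite{Chari:2001}, and, being the crystal of a $U_q'(\mathfrak g)$-module, is regular with weights at level $0$ by Kashiwara's results (Remark~\ref{r:reality}). It then remains to match the $0$-arrows. My preferred route is to use that the genuine KR crystal $B^{2,s}$ is symmetric under the order-three affine Dynkin automorphism $\p$, so that its affine operator $f_0$ arises from a promotion operator $p'$, i.e.\ a twisted isomorphism of $\bigoplus_{k=0}^s B(k\Lambda_2)$ onto itself induced by $\p$ with $f_0=(p')^2\circ f_1\circ p'$ and $(p')^3=\mathrm{id}$. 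The uniqueness clause of Theorem~\ref{t:L2_main} then forces $p'=p$, whence the two affine structures coincide and $\widetilde B\cong B^{2,s}$.

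To make the existence of $p'$ rigorous I would run the rigidity argument of Theorem~\ref{t:16unique}, now fed by Corollary~\ref{c:L2hw} in place of Lemma~\ref{lemma:mult free}. Writing $J=\{0,2,3,4,5,6\}$ for the classical index set obtained by deleting the image node $\p(0)=1$, both the $I$-restriction and the $J$-restriction of $B^{2,s}$ are classical $E_6$ crystals isomorphic to $\bigoplus_{k=0}^s B(k\Lambda_2)$, and on the common index set $K=\{2,3,4,5,6\}$ the identifying isomorphisms preserve the full weight by the level-$0$ computation of Remark~\ref{r:reality}. Since $e_i$ commutes with these isomorphisms for all $i\in K$, it suffices to match them on the $K$-highest weight nodes; and Corollary~\ref{c:L2hw} shows that these nodes are pinned down by their $\{2,3,5\}$-weight together with $\varphi_1$. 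Recovering $\varphi_1$ on the $J$-side (where node $1$ is absent) from the preserved weight and the component index is the technical heart of the matching.

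I expect the main obstacle to be exactly this identification step, and within it the verification that the $J$-restriction of $B^{2,s}$ reassembles as the $\p$-image of $\bigoplus_{k=0}^s B(k\Lambda_2)$ rather than some other arrangement of the components $B(k\Lambda_2)$. The non-irreducibility of $B^{2,s}$ is what makes this harder than the $r\in\{1,6\}$ case: one cannot simply exhibit a single highest weight vector as in Corollary~\ref{c:E6_16}, but must instead track each component through the promotion, and must check that the weight-$0$ nodes (which carry no loops by Proposition~\ref{p:L2wi}) introduce no ambiguity. Corollary~\ref{c:L2hw} together with the uniqueness in Theorem~\ref{t:L2_main} is designed to eliminate precisely these ambiguities, so once the bookkeeping is set up the residual verifications are finite and mechanical.
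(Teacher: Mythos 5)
Your construction half is exactly the paper's: Theorem~\ref{t:L2_main} plus Theorem~\ref{t:main} gives the combinatorial affine structure. But the identification with $B^{2,s}$ is where the two arguments diverge, and your route has a real gap. The paper disposes of this step in one line by invoking \cite[Theorem 6.1]{KMOY:2007}, an external uniqueness theorem guaranteeing that \emph{any} combinatorial affine structure on a crystal with classical decomposition $\bigoplus_{k=0}^s B(k\Lambda_2)$ is isomorphic to the Kirillov--Reshetikhin crystal $B^{2,s}$. Combined with Chari's decomposition result, nothing further needs to be checked. You instead propose to exhibit a promotion operator $p'$ of order three on the genuine $B^{2,s}$ and then appeal to the uniqueness clause of Theorem~\ref{t:L2_main} to force $p'=p$.

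The gap is that the existence of $p'$ is asserted via an appeal to the ``symmetry'' of $B^{2,s}$ under $\p$, which is not a formal consequence of anything available: it amounts to the claim that $B^{2,s}$ with all arrow labels twisted by $\p$ is isomorphic to $B^{2,s}$ as an affine crystal, and proving that is essentially as hard as the identification you are trying to establish. Your fallback --- rerunning the rigidity argument of Theorem~\ref{t:16unique} with Corollary~\ref{c:L2hw} in place of Lemma~\ref{lemma:mult free} --- does not transfer as stated. The engine of Theorem~\ref{t:16unique} is that the $K$-highest weight nodes are separated by their $K$-weights alone, so the two restriction isomorphisms are forced to agree node by node. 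In the reducible case Corollary~\ref{c:L2hw} tells you the $K$-highest weight nodes are only separated by the $\{2,3,5\}$-weight \emph{together with} $\varphi_1$, and $\varphi_1$ is computed from the arrow colored by the very node that is deleted in the $J$-restriction; you correctly identify recovering it as ``the technical heart'' but do not supply the argument, nor the bookkeeping that tracks how the components $B(k\Lambda_2)$ for $k<s$ are matched up across the two restrictions. Until those are supplied, the identification step is incomplete; the paper's citation of \cite[Theorem 6.1]{KMOY:2007} is precisely what makes this work avoidable.
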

\begin{proof}
By Theorem~\ref{t:main}, $p$ yields a combinatorial affine structure for 
$\bigoplus_{k = 0}^s B(k\Lambda_2)$ via Equation~\eqref{e:twist}.
The results of Chari~\cite{Chari:2001} show that $B^{2,s}$ has the same 
classical decomposition.
By \cite[Theorem 6.1]{KMOY:2007}, we have that if a combinatorial affine
structure for $\bigoplus_{k = 0}^s B(k \Lambda_2)$ exists, then it is isomorphic to 
the Kirillov--Reshetikhin crystal $B^{2,s}$.  
\end{proof}

\begin{example}
Suppose $s = 3$.  Then, $H_s^{\{6\}}$ decomposes into $(s+1)$ components according
to which summand $B(k \Lambda_2)$ the node lies in.  Each of these components
further decomposes as $\bigcup_{m=0}^k \mathcal{C}(m)$ by
Proposition~\ref{p:weak_comp}.  Hence, we have the following schematic of
$H_s^{\{6\}}$ in which the twisted isomorphism $p$ reflects the $\mathcal{C}(m)$
components along rows.  The twisted isomorphism $p$ also twists the weights
according to $\p$, which is not shown explicitly.  The resulting node lies in
$H_s^{\{1\}}$.
\[
{ \tiny 
\xymatrix@-0.9pc{
k=0 & k=1 & k=2 & k=3=s \\
\mathcal{C}(0) \ar@/^1pc/@{<->}[rrr] & \mathcal{C}(0) \ar@/_1pc/@{<->}[r]&
\mathcal{C}(0) & \mathcal{C}(0) \\
& \mathcal{C}(1) \ar@/^1pc/@{<->}[rr] & \mathcal{C}(1) \ar@(dr,ur) &
\mathcal{C}(1) \\
&  & \mathcal{C}(2) \ar@/^1pc/@{<->}[r] & \mathcal{C}(2) \\
&  &  & \mathcal{C}(3) \ar@(dr,ur) \\
   } } \]
To compute $p(b)$ for 
\[ b = (2\bar{1}\bar{0} \otimes \bar{0}1) \otimes (06\bar{2} \otimes \bar{0}\bar{5}26) \otimes (05\bar{2}\bar{6} \otimes \bar{0}\bar{6}2) \]
we observe that $\wt_2(b) = 1$, $\wt_3(b) = 0$, $\wt_5(b) = 0$ so the
composition associated $b$ is $(1,0,0)$.  According to Theorem~\ref{t:L2_main},
$p$ maps $b$ to the unique chain of length 1 in $G_{1}$ corresponding to the
composition $(0,1,0)$, namely $b' = 0\bar{1} \otimes \bar{0}\bar{1}3$.
In general, we define $f_0(b)$ by $p^{-1} \circ f_1 \circ p(b)$.  In this case,
$f_0(b) = 0$.
\end{example}

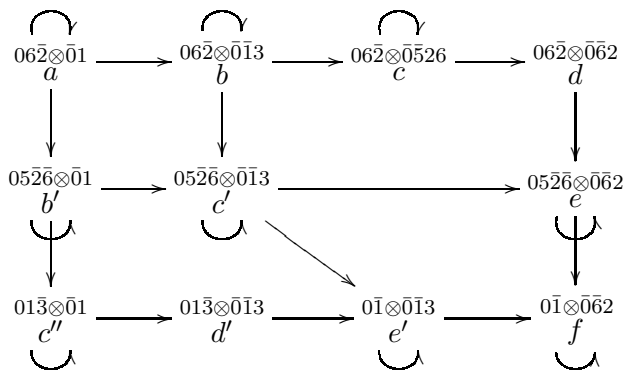
\begin{figure}[ht]
\begin{tabular}{c}
\xymatrix{ 
\stackrel{06\bar{2} \otimes \bar{0}1}{a} \ar[r] \ar[d] \ar@(ul,ur)  & \stackrel{06\bar{2} \otimes \bar{0}\bar{1}3}{b} \ar[r] \ar[d] \ar@(ul,ur) & \stackrel{06\bar{2} \otimes \bar{0}\bar{5}26}{c} \ar[r] \ar@(ul,ur) & \stackrel{06\bar{2} \otimes \bar{0}\bar{6}2}{d} \ar[d] \\ 
\stackrel{05\bar{2}\bar{6} \otimes \bar{0}1}{b'} \ar[r] \ar[d] \ar@(dl,dr) & \stackrel{05\bar{2}\bar{6} \otimes \bar{0}\bar{1}3}{c'} \ar[rr] \ar[dr] \ar@(dl,dr) &  & \stackrel{05\bar{2}\bar{6} \otimes \bar{0}\bar{6}2}{e} \ar[d] \ar@(dl,dr) \\ 
\stackrel{01\bar{3} \otimes \bar{0}1}{c''} \ar[r] \ar@(dl,dr) & \stackrel{01\bar{3} \otimes \bar{0}\bar{1}3}{d'} \ar[r] & \stackrel{0\bar{1} \otimes \bar{0}\bar{1}3}{e'} \ar[r] \ar@(dl,dr)  & \stackrel{0\bar{1} \otimes \bar{0}\bar{6}2}{f}  \ar@(dl,dr) \\ 
}
\end{tabular}
\caption{Graph $G_{6,1;0}$ of weakly increasing $\widetilde{I} \setminus \{6,1\}$-highest weight nodes}\label{f:61_hw_graph}
\end{figure}

The composition graph for the $(\widetilde{I} \setminus \{6,1\})$-highest weight
nodes is shown in Figure~\ref{f:61_hw_graph}.  This graph was constructed using
the algorithm described in Definition~\ref{d:composition_graph}.  It is more
complicated than the composition graphs $G_{6}$ and $G_{1}$ because we are
taking highest weight nodes with respect to the complement of two classical
Dynkin diagram nodes.  Also, we use the level 0 hypothesis to compute affine
weights and our composition graph includes only those nodes that can contribute
to chains having 0-highest weight.  A finite computation shows that the vertex
set of $G_{6,1;0}$ is transitively closed, so the $\widetilde{I} \setminus
\{6,1\}$-highest weight nodes correspond to chains in $G_{6,1;0}$ by
Lemma~\ref{l:cg}.

In order to prove Theorem~\ref{t:L2_main}, we study how $p$ maps chains from
$G_{1}$ to chains in $G_{6,1;0}$.

\begin{lemma}\label{l:solve_equations}
Let $b$ be an $I \setminus \{1\}$-highest weight node of $B(j \Lambda_2)$
corresponding to the weak composition $(L_2, L_3, L_5)$.  Then, for every $j
\leq k \leq s$, there exists a unique $\widetilde{I} \setminus \{6,1\}$-highest
weight node $b'$ in $B(k \Lambda_2)$ such that $\wt_i(b') = \wt_{\p^{-1}(i)}(b)
= L_{\p^{-1}(i)}$ for $i \in \{2, 3, 5\}$.  Moreover, $\varphi_1(b') = k-j$.
\end{lemma}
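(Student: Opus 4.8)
The plan is to mirror the linear-algebra computation of Proposition~\ref{p:weak_comp}, now carried out on the composition graph $G_{6,1;0}$ of Figure~\ref{f:61_hw_graph}. By Lemma~\ref{l:cg}, a $\widetilde{I}\setminus\{6,1\}$-highest weight node of $B(k\Lambda_2)$ is exactly a chain of length $k$ in $G_{6,1;0}$, that is, a choice of nonnegative integer multiplicities for the eleven vertices $a,b,c,d,b',c',e,c'',d',e',f$ supported on a directed path. First I would record, for each vertex, its $\{2,3,5\}$-weight together with $\varphi_1,\varepsilon_1,\varphi_6,\varepsilon_6$ (the latter obtained from the tensor product signature rule applied to the underlying element of $B(\Lambda_6)\otimes B(\Lambda_1)$). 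Two features simplify the bookkeeping: every vertex has $\wt_0=0$, so by Remark~\ref{r:reality} the level-$0$ highest weight condition is automatic and all constraints are classical; and the two loop-free, weight-$0$ vertices $d,d'$ have multiplicity in $\{0,1\}$ by Proposition~\ref{p:L2wi}.

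Next I would impose the target data. Writing $m=L_2+L_3+L_5$ and using $\p^{-1}(2)=5,\ \p^{-1}(3)=2,\ \p^{-1}(5)=3$, the node $b'$ must have $(\wt_2,\wt_3,\wt_5)=(L_5,L_2,L_3)$; by the $I\setminus\{1\}$ analogue of Proposition~\ref{p:weak_comp} recorded in Corollary~\ref{c:L2hw}, the node $b$ has $\varphi_1(b)=j-m$, and since $b'$ is to be a $\p$-twisted image of $b$, Equation~\eqref{e:twist} forces $\varphi_6(b')=\varphi_{\p^{-1}(6)}(b)=\varphi_1(b)=j-m$. Summing the per-vertex weights over a chain turns the three weight requirements, the constraint $\varphi_6(b')=j-m$, and the length requirement $k=\sum(\text{multiplicities})$ into a linear system in the multiplicities, exactly as in Proposition~\ref{p:weak_comp}. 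I would solve this system separately along each maximal directed path of $G_{6,1;0}$, check that the solutions agree on the overlaps of the paths, and verify that a single assignment of multiplicities is nonnegative and respects the $\{0,1\}$ bounds on $d,d'$. This produces $b'$ and gives both existence and uniqueness.

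Finally I would read off $\varphi_1(b')$ from the solution. Using the same signature bookkeeping as in Proposition~\ref{p:weak_comp}, the solution yields the relation $k=\varphi_1(b')+\varphi_6(b')+m$, reflecting that a node highest weight for $\widetilde{I}\setminus\{6,1\}$ is free only in the directions $1$ and $6$. Substituting $\varphi_6(b')=j-m$ gives $\varphi_1(b')=k-(j-m)-m=k-j$, as claimed.

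The main obstacle will be the middle step. Unlike the two maximal chains of $G_{6}$, the graph $G_{6,1;0}$ has several maximal directed paths together with the two loop-free vertices $d,d'$, so substantially more case analysis is needed to pin down the unique nonnegative integer solution and to check that the paths agree where they meet; indeed the $\{2,3,5\}$-weight and $k$ alone do not determine $b'$ (e.g.\ two distinct length-$2$ chains of $\{2,3,5\}$-weight $0$ exist), which is precisely why the $\varphi_6$ constraint must be included. A secondary subtlety is that $\varphi_1$ and $\varphi_6$ must be extracted from the $1$- and $6$-signatures of the chain rather than from the weights alone, since $b'$ is not highest weight in either of these directions.
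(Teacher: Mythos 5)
Your proposal follows essentially the same route as the paper: impose the prescribed $\{2,3,5\}$-weights together with $\varphi_6(b')=\varphi_1(b)=j-(L_2+L_3+L_5)$ on chains in $G_{6,1;0}$, solve the resulting linear system along each maximal directed path, and check nonnegativity, coverage of all parameter ranges, and agreement of the solutions where the cases overlap, from which $\varphi_1(b')=k-j$ is read off the explicit solution. The computation you defer is exactly the six-case analysis and the uniqueness check that the paper carries out in Appendices~\ref{s:appendix} and~\ref{s:appendix_b}.
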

\begin{proof}
In Appendix~\ref{s:appendix}, we solve the equations describing how to map an $I
\setminus \{1\}$-highest weight node from component $j$ to an $\widetilde{I} \setminus
\{6,1\}$-highest weight node of component $k$, using the equation for
$\varphi_1$ from Corollary~\ref{c:L2hw} which must become $\varphi_6$ in the
image.

As shown in Appendix~\ref{s:appendix}, there is one system of linear equations for 
each of the 6 maximal chains in $G_{6,1;0}$.  The set of parameters for which each 
case is valid is shown below.
{ \tiny 
\begin{eqnarray*}
\text{ Case 1 } & (k-j) + L_3 \leq \varphi_6  \\
\text{ Case 2 } & (k-j) \leq \varphi_6 \leq (k-j) + L_3 \leq \varphi_6 + L_5 \\
\text{ Case 3 } & \varphi_6 \leq (k-j) \leq (k-j) + L_3 \leq \varphi_6 + L_5 \\
\text{ Case 4 } & \varphi_6 \leq (k-j) \leq \varphi_6 + L_5 \leq (k-j) + L_3 \\
\text{ Case 5 } & (k-j) \leq \varphi_6 \leq \varphi_6 + L_5 \leq (k-j) + L_3 \\
\text{ Case 6 } & \varphi_6 + L_5 < (k-j) \\
\end{eqnarray*}
}
Observe that these cover all possible values of the parameters, because if we
are not in Case 1 nor Case 6, then we have the partial order of
parameters shown below.
\[\tiny \xymatrix { (k-j)+L_3 \ar@{-}[d] \ar@{-}[dr] & \varphi_6 + L_5 \ar@{-}[d] \ar@{-}[dl] \\
(k-j) & \varphi_6 \\ } \]
This partial order has exactly four linear extensions corresponding precisely to
Cases 2-5.

We must also show that if a particular set of parameters $(L_2, L_3, L_5, j, k)$
is satisfied by multiple cases, then the solutions obtained from each case all
agree.  This can be done by hand for the systems described in
Appendix~\ref{s:appendix}.  In Appendix~\ref{s:appendix_b} we also describe an
effective procedure that can be automated to establish this fact.

Observe that in every solution, $k-j$ must be nonnegative.  Moreover, in every
case, $\varphi_1$ of the solution is $k-j$.  
\end{proof}

\begin{proof}[Proof of Theorem~\ref{t:L2_main}]
Fix a weight $L_2 \Lambda_2 + L_3 \Lambda_3 + L_5 \Lambda_5$ and a component $j
\leq s$.  There is a unique $I \setminus \{1\}$-highest weight node $b$
corresponding to these parameters by Corollary~\ref{c:L2hw}.  Any twisted
isomorphism $p$ induced from the Dynkin diagram automorphism $\p$ sends $b$ to
an $\widetilde{I} \setminus \{6,1\}$-highest weight node $p(b)$ in some
component, say $k$, and $p(p(b))$ is an $I \setminus \{6\}$-highest weight node
in some component, say $j'$.  

By Lemma~\ref{l:solve_equations}, we have that a solution $p(b)$ exists and that
$\varphi_1(p(b))$ is $(k-j)$. 
Hence,
\[ j' - (L_2 + L_3 + L_5) = (k - j) \geq 0 \]
by Corollary~\ref{c:L2hw}.

We suppose that $p$ has order three, and work by downward induction on $j$,
starting from the fact that nodes of component $j = s$ must go to component $k =
s$, which goes to component $j' = L_2 + L_3 + L_5$.  As $j$ decreases, if we
ever have $k < s$, then $\varphi_1$ with respect to $\widetilde{I} \setminus
\{6,1\}$ is less than $(s-j)$.  This implies that $j' < (s-j) + (L_2 + L_3 +
L_5)$, and so we would map $p(b)$ onto an $I \setminus \{6\}$-highest weight
node that has already appeared in the image of $p$.  Hence, we find that $k = s$
always.  This specifies a unique solution of order three for $p$.
\end{proof}


\bigskip
\subsection{A conjecture for $E_7$}
\label{ss:E7}

Recall the Dynkin diagram of type $E_7^{(1)}$ shown in Figure~\ref{fig:E67}.  Let
$\p$ denote the unique automorphism of this diagram, so $\p$ has order two and
sends the affine node 0 to node 7.

The adjoint node in $E_7$ is node 1, and \cite{Chari:2001} has given the
decomposition $B^{1, s} = \bigoplus_{k = 0}^{s} B(k \Lambda_1)$ of the
corresponding Kirillov--Reshetikhin crystal into classical crystals.  We can
form the composition graph for $J = \{7\}$ and the result is shown in
Figure~\ref{f:07_hw_graph}.

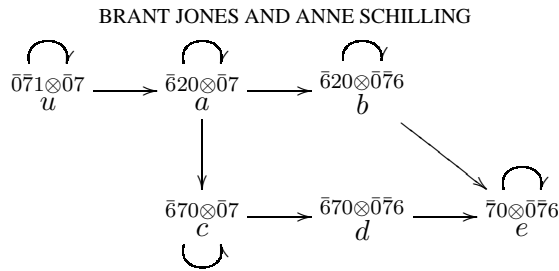
\begin{figure}[ht]
\begin{tabular}{c}
\xymatrix{ 
\stackrel{\bar{0}\bar{7}1 \otimes \bar{0}7}{u} \ar[r] \ar@(ul,ur) & \stackrel{\bar{6}20 \otimes \bar{0}7}{a} \ar[r] \ar[d]  \ar@(ul,ur) & \stackrel{\bar{6}20 \otimes \bar{0}\bar{7}6}{b} \ar[dr] \ar@(ul,ur) & \\
& \stackrel{\bar{6}70 \otimes \bar{0}7}{c} \ar[r] \ar@(dl,dr) & \stackrel{\bar{6}70 \otimes \bar{0}\bar{7}6}{d} \ar[r] & \stackrel{\bar{7}0 \otimes \bar{0}\bar{7}6}{e}  \ar@(ul,ur) \\
}
\end{tabular}
\caption{Composition graph $G_{7}$ for $I \setminus \{7\}$-highest weight nodes in 
$B(\Lambda_1)$ for $E_7$}\label{f:07_hw_graph}
\end{figure}

This graph is essentially the same as the composition graph $G_{1}$ that we
obtained for $B(\Lambda_2)$ in $E_6$.  In particular, the classical weights
$\Lambda_1, \Lambda_2, \Lambda_6, \Lambda_7$ that appear in $G_{7}$ for type $E_7$
correspond to $\Lambda_2, \Lambda_5, \Lambda_3, \Lambda_1$ in $G_{1}$ of type $E_6$. 
Our solution to the equations associated with $G_{1}$ in $E_6$ shows that there exists a unique 
$I \setminus \{7\}$-highest weight node of $B(k \Lambda_1)$ in $E_7$ having
weight $L_1 \Lambda_1 + L_2 \Lambda_2 + L_6 \Lambda_6$.  That is, the 
$I \setminus \{7\}$-highest weight nodes of $B(k \Lambda_1)$ are in bijection
with weak compositions with 3 parts.  Moreover, we have that $k = \varphi_7(b) +
\wt_1(b) + \wt_2(b) + \wt_6(b)$ for such nodes $b$.  

Define $p: \bigoplus_{k = 0}^{s} B(k \Lambda_1) \rightarrow \bigoplus_{k =
0}^{s} B(k \Lambda_1)$ on the $I \setminus \{7\}$-highest weight nodes by
sending $b \in B(k \Lambda_1)$ to the unique $I \setminus \{7\}$-highest
weight node $b'$ in component $(s-k) + (\wt_1(b) + \wt_2(b) + \wt_6(b))$
satisfying $\wt_{\p(i)}(b') = \wt_i(b)$ for each $i \in \{1, 2, 6\}$.

Since $\p$ does not have order three, Theorem~\ref{t:main} does not apply to prove
that this construction gives a combinatorial affine structure.  To get a sense
of the ambiguity that can arise when working with twisted isomorphisms of order
two, consider Example~\ref{e:prom2}.  It remains to show that if we define
0-arrows by $f_0 = p \circ f_7 \circ p$, then the restriction to $\{0,
i\}$-arrows is a crystal for all $i \in I$.  The argument given in the proof of
Theorem~\ref{t:main} shows that this is true for all $i \neq 7$.  Moreover, we
conjecture that this is true for $i = 7$ as well.

\begin{conjecture} \label{conj:E7}
Define $p: \bigoplus_{k = 0}^{s} B(k \Lambda_1) \rightarrow \bigoplus_{k =
0}^{s} B(k \Lambda_1)$ as described above, and let $f_0 = p \circ f_7 \circ p$.
Then $f_0$ commutes with $f_7$ so we obtain a combinatorial affine structure on
$\bigoplus_{k = 0}^{s} B(k \Lambda_1)$, which is isomorphic to $B^{1,s}$ of type
$E_7^{(1)}$.
\end{conjecture}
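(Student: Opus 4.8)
The plan is to follow the scheme of Theorem~\ref{t:main} as far as it will go and to isolate precisely the one ingredient that the order-two hypothesis fails to supply. First I would record that the map $p$ really is a well-defined twisted isomorphism of order two, by the same reasoning as in Theorem~\ref{t:L2_main}: the $I\setminus\{7\}$-highest weight nodes of each component are determined by their $\{1,2,6\}$-weight, and since $k=\varphi_7(b)+\wt_1(b)+\wt_2(b)+\wt_6(b)$ forces $\wt_1(b)+\wt_2(b)+\wt_6(b)\le k$, the target component $(s-k)+(\wt_1(b)+\wt_2(b)+\wt_6(b))=s-\varphi_7(b)$ always exists. Applying the weight-matching bijection twice returns $b$ to its own component and twists weights by $\p^2=\mathrm{id}$, so $p^2=\mathrm{id}$ on highest weight nodes and hence, after extension by \eqref{e:twist}, on all of $\bigoplus_{k=0}^s B(k\Lambda_1)$.

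By Definition~\ref{d:cas}, it then remains only to verify regularity, i.e. that the restriction to each $\{0,i\}$-subset is a rank-two classical crystal. As already noted before the conjecture, the argument in the proof of Theorem~\ref{t:main} handles every $\{0,i\}$ with $i\neq 7$: applying $p$ and using \eqref{e:twist} (which gives $p f_i=f_{\p(i)}p$ for $i\in\{1,\dots,6\}$ and $p f_0=f_7 p$) identifies the $\{0,i\}$-restriction with the classical $\{7,\p(i)\}$-restriction, which is automatically a crystal. This leaves the single case $\{0,7\}$. Since nodes $0$ and $7$ are non-adjacent in the $E_7^{(1)}$ diagram of Figure~\ref{fig:E67}, the pairing $\langle\alpha_0^\vee,\alpha_7\rangle$ vanishes, so the $\{0,7\}$-restriction must be of type $A_1\times A_1$; with the string data in each color unaffected by the other (automatic from $\langle\alpha_0^\vee,\alpha_7\rangle=0$ and Remark~\ref{r:consequences}), this is exactly the commutation $f_0 f_7=f_7 f_0$ asserted in the conjecture. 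Because $f_0=p\circ f_7\circ p$ with $p=p^{-1}$, the desired identity reads $p f_7 p f_7=f_7 p f_7 p$, equivalently that $(pf_7)^2$ commutes with $p$.

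The main obstacle, and the reason Theorem~\ref{t:main} does not close the argument, is that \eqref{e:twist} controls $p\circ f_i$ only for $i\in I\setminus\{7\}$, whereas the $\{0,7\}$-check requires control of $p\circ f_7$, the one operator that $p$ does not intertwine. In the order-three setting the extra relation \eqref{e:as} let one apply $p^2$ to convert the troublesome $\{0,6\}$-restriction into a genuinely classical $\{1,6\}$-restriction; here the order-two symmetry only yields $p f_0 p=f_7$, which merely exchanges the two colors and maps the $\{0,7\}$-restriction to itself, so no such reduction is available. To attack $p\circ f_7$ directly I would work at the level of the composition graph $G_{7}$ of Figure~\ref{f:07_hw_graph} (and the analogue, for the doubly-highest-weight nodes, of the $E_6$ graph $G_{6,1;0}$): by Lemma~\ref{l:cg} every relevant highest weight node is a chain, $p$ acts on chains by the explicit weight-matching bijection, and one can describe how $f_7$ displaces a chain. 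The hard part will be establishing an analogue of Lemma~\ref{l:solve_equations} that tracks both $f_0 f_7$ and $f_7 f_0$ through the finitely many maximal-chain cases, exactly as the case analysis of Appendices~\ref{s:appendix} and~\ref{s:appendix_b} does for $E_6$, and that shows the two resulting chains always coincide uniformly in $s$; only the controlled shifts of the component index $k$ by $f_7$ and by $p$ enter, so such uniformity is plausible but must be proved. Since each $\bigoplus_{k=0}^s B(k\Lambda_1)$ is finite, the identity $f_0 f_7=f_7 f_0$ can be checked in \sage for small $s$ (cf.\ Section~\ref{s:sage_implementation}) to corroborate the conjecture; the genuinely difficult remaining step is the uniform-in-$s$ combinatorial argument, for which the order-two symmetry offers no shortcut.
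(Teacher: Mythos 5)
The statement you are proving is Conjecture~\ref{conj:E7}, and the paper offers no proof of it: the authors only report having verified it computationally for $s \leq 2$. So there is no ``paper's own proof'' to measure you against, and your proposal should be judged as a reconstruction of the surrounding discussion plus an honest assessment of what remains open. On that score it is accurate. Your verification that $p$ is well defined and has order two (the target component $s-\varphi_7(b)$ lies in $[0,s]$ because $\varphi_7(b)\leq k\leq s$, and two applications of the weight-matching bijection return to component $k$ with weights twisted by $\p^2=\mathrm{id}$) matches what the paper asserts without detail; your reduction of regularity for the $\{0,i\}$-restrictions with $i\neq 7$ to the argument of Theorem~\ref{t:main}, via $pf_i=f_{\p(i)}p$ and $pf_0=f_7p$, is exactly the paper's remark preceding the conjecture; and you correctly isolate the $\{0,7\}$-restriction --- equivalently the commutation $f_0f_7=f_7f_0$, since $0$ and $7$ are non-adjacent in $E_7^{(1)}$ --- as the one step that the order-two symmetry cannot supply, for precisely the reason you give: Equation~\eqref{e:twist} never controls $p\circ f_7$, and $pf_0p=f_7$ only swaps the two colors. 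Your proposed route through an $E_7$ analogue of Lemma~\ref{l:solve_equations} and the appendices is a plausible attack, but as you acknowledge it is not carried out, so neither your proposal nor the paper contains a proof; the statement remains a conjecture.
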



We have verified this conjecture for $s \leq 2$.


\bigskip
\section{\sage implementation}\label{s:sage_implementation}

As illustrated in the following examples, we have implemented the crystals described in this paper
in \sage~\cite{sage} and {\tt Sage-Combinat}~\cite{sage-combinat}.
For more information see the documentation of {\tt Sage-Combinat} and \sage,
in particular the crystal documentation
\footnote{\url{http://www.sagemath.org/doc/reference/combinat/crystals.html}}.

\begin{computerExample}
For type $E_6$, the building block $B(\Lambda_1)$ of Figure~\ref{fig:L1}
is accessible as follows:
\begin{verbatim}
sage: C = CrystalOfLetters(['E',6])
sage: C.list()
[[1], [-1, 3], [-3, 4], [-4, 2, 5], [-2, 5], [-5, 2, 6], [-2, -5, 4, 6],
[-4, 3, 6], [-3, 1, 6], [-1, 6], [-6, 2], [-2, -6, 4], [-4, -6, 3, 5],
[-3, -6, 1, 5], [-1, -6, 5], [-5, 3], [-3, -5, 1, 4], [-1, -5, 4], [-4, 1, 2],
[-1, -4, 2, 3], [-3, 2], [-2, -3, 4], [-4, 5], [-5, 6], [-6], [-2, 1], [-1, -2, 3]]
\end{verbatim}
The crystal can be plotted as 
\begin{verbatim}
sage: G = C.digraph()
sage: G.show(edge_labels=true, figsize=12, vertex_size=1)
\end{verbatim}
or
\begin{verbatim}
sage: view(C, viewer = 'pdf', tightpage = True)
\end{verbatim}
The dual crystal $B(\Lambda_6)$ can be constructed as
\begin{verbatim}
sage: C = CrystalOfLetters(['E',6], dual = True)
\end{verbatim}
The crystal $B(\Lambda_7)$ of type $E_7$ can be accessed in a similar fashion.
Figure~\ref{fig:BLa7} was constructed as follows:
\begin{verbatim}
sage: C = CrystalOfLetters(['E',7])
sage: C.latex_file(filename.tex)
\end{verbatim}
\end{computerExample}

\begin{computerExample}
The classical crystals for type $E_6$ (and similarly for $E_7$) corresponding to arbitrary 
dominant weights can be constructed as follows:
\begin{verbatim}
sage: C = CartanType(['E',6])
sage: Lambda = C.root_system().weight_lattice().fundamental_weights()
sage: T = HighestWeightCrystal(C, dominant_weight=Lambda[1]+Lambda[6]+Lambda[2])
sage: T.highest_weight_vector()
[[1], [[2, -1], [1]], [6]]
sage: T.cardinality()          
34749
\end{verbatim}
\end{computerExample}

\begin{computerExample}
The Kirillov--Reshetikhin crystals $B^{r,s}$ for $r=1,6,2$ for type $E_6$ are also implemented:
\begin{verbatim}
sage: K = KirillovReshetikhinCrystal(['E',6,1], 1,1)
sage: K.cardinality()
27
sage: K = KirillovReshetikhinCrystal(['E',6,1], 6,1)

sage: K = KirillovReshetikhinCrystal(['E',6,1], 2,1)
sage: K.classical_decomposition()
Finite dimensional highest weight crystal of type ['E', 6] and 
dominant weight(s) [0, Lambda[2]]
sage: b = K.module_generator(); b
[[[2, -1], [1]]]
sage: b.e(0)
[]
sage: b.e(0).e(0)
[[[-1], [-2, 1]]]
\end{verbatim}
\end{computerExample}


\bigskip
\section{Outlook} \label{s:outlook}

In the case of $r = 3$, by~\cite{Chari:2001} the classical decomposition is $B^{r, s} \cong
\bigoplus_{\stackrel{j+k = s}{j,k \geq 0}} B(j\Lambda_3 + k \Lambda_6)$.  It is
possible to form a composition graph that includes nodes from both
$B(\Lambda_3)$ and $B(\Lambda_6)$ so that weakly increasing chains of vertices
correspond to $(I \setminus J)$-highest weight nodes.  However, it is
straightforward to verify that even for $s = 1$, the $I \setminus
\{1\}$-highest weight nodes are not uniquely determined by the statistics
$(\varepsilon_1, \ldots, \varepsilon_6, \varphi_1, \ldots, \varphi_6)$, in
contrast to the cases $r = 1, 2, 6$ that we have considered in this work.  
Hence one would first have to find vertices within each component which can
be distinguished using a suitable statistics, and then construct the corresponding
composition graph.
The case $r = 5$ is essentially the same as the $r = 3$ case.

The $\varepsilon$ and $\varphi$ statistics are the most obvious quantities
preserved by twisted isomorphism, and the fact that we were able to identify
highest weight nodes by their statistics allowed us to solve the equations that
proved our twisted isomorphism in fact had order three.

The classical decomposition of $B^{4,s}$ of type $E_6^{(1)}$ was conjectured 
in~\cite{HKOTY} and proven by Nakajima~\cite{nakajima:2003}.
As it involves more than two distinct fundamental weights, our tableau model and 
composition graphs would likely be substantially more complicated than those we have 
used for the cases $r = 1, 2, 6$.

As already mentioned in Section~\ref{ss:E7}, the method of composition graphs
for the adjoint Kirillov--Reshetikhin crystal $B^{1,s}$ of type $E_7^{(1)}$ is applicable and analogous
to type $E_6^{(1)}$. However, to prove that the result is indeed an affine combinatorial
crystal requires the analogue of Theorem~\ref{t:main} for twisted isomorphisms of
order two. The Dynkin diagram $E_8^{(1)}$ does not have nontrivial automorphisms.
Hence a new strategy is required.

It was conjectured in~\cite[Conjecture 2.1]{HKOTT} that the crystals $B^{r,s}$ of type $E_6^{(1)}$
are perfect. The proof for the crystals considered in this paper is still outstanding.

All Kirillov--Reshetikhin crystals can in principle be constructed from those of simply-laced
type using virtual crystals. In particular, the KR crystals for type $F_4^{(1)}$ and $E_6^{(2)}$ 
can be constructed from those of type $E_6^{(1)}$ (see~\cite[Example 3.1]{OSS:2003}).
Hence the construction of all type $E$ KR crystals is an important undertaking.


\bigskip
\appendix
\section{}\label{s:appendix}

Here, we set up and solve the linear equations describing how to map an $I
\setminus \{1\}$-highest weight node from component $j$ to an $\widetilde{I}
\setminus \{6,1\}$-highest weight node of component $k$, using the equation for
$\varphi_1 = j - (\wt_2 + \wt_3 + \wt_5)$ which must become $\varphi_6$ in the
image.  The cases correspond to the 6 maximal chains in the directed graph
$G_{6,1;0}$.

\bigskip

{\bf Case (1). }
{\tiny
\begin{eqnarray*}
a+b+c+d+e+f &=& k \\
\varphi_6 = a+b+2c+d &=& j - L_2 - L_3 -L_5 \\
L_2 &=& -a - b + f \\
L_3 &=& b \\
L_5 &=& -c+e \\
\end{eqnarray*}
with solution
\begin{eqnarray*}
f &=& (k-j)+L_2+L_3 \\
a &=& (k-j) \\
2c+d &=& 2j - k - 2 L_3 - L_2 - L_5 = \varphi_6 - (k-j) - L_3 \\
e &=& c + L_5 \\
\end{eqnarray*}
valid if $(k-j) + L_3 \leq \varphi_6$.
}

\medskip

{\bf Case (2). }
{\tiny 
\begin{eqnarray*}
a+b+c'+e+f &=& k \\
\varphi_6 = a+b &=& j - L_2 - L_3 -L_5 \\
L_2 &=& -a - b - c' + f \\
L_3 &=& b + c' \\
L_5 &=& c' + e \\
\end{eqnarray*}
with solution
\begin{eqnarray*}
f &=& (k-j) + L_2 + L_3 \\
a &=& (k-j) \\
b &=& 2j -k - L_2 - L_3 - L_5 = \varphi_6 - (k-j) \\
c' &=& -2j +k + 2 L_3 + L_2 + L_5 = L_3 + (k-j) - \varphi_6 \\
e &=& 2j -k - 2 L_3 - L_2 = L_5 - L_3 + \varphi_6 - (k-j) \\
\end{eqnarray*}
valid if $(k-j) \leq \varphi_6 \leq (k-j) + L_3 \leq \varphi_6 + L_5$.
}

\medskip

{\bf Case (3). }
{\tiny
\begin{eqnarray*}
a+b'+c'+e+f &=& k \\
\varphi_6 = a &=& j - L_2 - L_3 -L_5 \\
L_2 &=& -a - b' - c' + f \\
L_3 &=& c' \\
L_5 &=& b' + c' + e \\
\end{eqnarray*}
with solution
\begin{eqnarray*}
f &=& (k-j) + L_2 + L_3 \\
b' &=& -2j + k + L_2 + L_3 + L_5   = (k-j) - \varphi_6  \\
e  &=& 2j - k - L_2 - 2 L_3   = \varphi_6 - (k-j) - L_3 + L_5  \\
\end{eqnarray*}
valid if $\varphi_6 \leq (k-j) \leq (k-j) + L_3 \leq \varphi_6 + L_5$.
}

\medskip

{\bf Case (4). }
{\tiny 
\begin{eqnarray*}
a+b'+c'+e'+f &=& k \\
\varphi_6 = a &=& j - L_2 - L_3 -L_5 \\
L_2 &=& -a - b' - c' + f \\
L_3 &=& c' + e' \\
L_5 &=& b' + c' \\
\end{eqnarray*}
with solution
\begin{eqnarray*}
f &=& \varphi_6 + L_2 + L_5 = j - L_3 \\
e' &=& k - 2j + L_2 + 2 L_3 = (k-j) - \varphi_6 + L_3 - L_5 \\
c' &=& L_3 - e' = \varphi_6 - (k-j) + L_5 \\
b' &=& L_5 - c' = (k-j) - \varphi_6 \\
\end{eqnarray*}
valid if $\varphi_6 \leq (k-j) \leq \varphi_6 + L_5 \leq (k-j) + L_3$.
}

\medskip

{\bf Case (5). }
{\tiny

\begin{eqnarray*}
a+b+c'+e'+f &=& k \\
\varphi_6 = a+b &=& j - L_2 - L_3 -L_5 \\
L_2 &=& -a - b - c' + f \\
L_3 &=& b+c'+e' \\
L_5 &=& c' \\
\end{eqnarray*}
with solution
\begin{eqnarray*}
f &=& L_2 + L_5 + \varphi_6 = j - L_3 \\
e'&=& k - 2j + L_2 + 2 L_3 = (k-j) - \varphi_6 - L_5 + L_3\\
b &=& L_3 - L_5 - e' = 2j-k - L_2 - L_3 - L_5 = \varphi_6 - (k-j)  \\
a &=& \varphi_6 - b = (k-j) \\
\end{eqnarray*}
valid if $0 \leq (k-j) \leq \varphi_6 \leq \varphi_6 + L_5 \leq (k-j) + L_3$.
}

\medskip

{\bf Case (6). }
{\tiny

\begin{eqnarray*}
a+b'+c''+d'+e'+f &=& k \\
\varphi_6 = a &=& j - L_2 - L_3 -L_5 \\
L_2 &=& -a - b' + f \\
L_3 &=& -c''+e' \\
L_5 &=& b' \\
\end{eqnarray*}
with solution
\begin{eqnarray*}
f &=& \varphi_6 + L_2 + L_5 = j - L_3 \\
d' + 2 e' &=& (k-j) - \varphi_6 + 2 L_3 - L_5 \\
c'' &=& e' - L_3 \\
\end{eqnarray*}
valid if $\varphi_6 + L_5 < (k-j)$ because
\[ 0 \leq e'-L_3 \iff 0 \leq 2e'-2L_3 \iff 0 < 2e' - 2L_3 + d' \]
and $c'' \geq 0$ implies $e' \geq 0$.
}


\bigskip
\section{}\label{s:appendix_b}

Here, we prove that, whenever a set of parameters $(L_2, L_3, L_5, j, k)$ is
satisfied by two distinct cases from the systems described in
Appendix~\ref{s:appendix}, then the solutions we obtain in each case agree.

Since $\varphi_1 = k-j$ in every solution by Lemma~\ref{l:solve_equations} and
$\varphi_6 = j - (\wt_2+\wt_3+\wt_5)$ encodes $j$, we have that any solution $b
\in H_s^{\{6,1\};0}$ for the parameters $(L_2, L_3, L_5, j, k)$ must have prescribed
values for $(\wt_2(b), \wt_3(b), \wt_5(b), \varphi_1(b), \varphi_6(b))$.
Hence, to prove the uniqueness of the solution, it suffices to show that the
nodes of $H_s^{\{6,1\};0}$ are uniquely determined by $(\wt_2, \wt_3, \wt_5,
\varphi_1, \varphi_6)$.

\begin{proposition}
Let $b \in B(k \Lambda_2)$ and $b' \in B(k' \Lambda_2)$ be $\widetilde{I} \setminus
\{6,1\}$-highest weight nodes.  If $\wt_i(b) = \wt_i(b')$ for $i = 2, 3, 5$ and
$\varphi_j(b) = \varphi_j(b')$ for $j = 1, 6$, then $b = b'$.
\end{proposition}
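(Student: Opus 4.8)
The plan is to reduce the claim to the explicit solutions tabulated in Appendix~\ref{s:appendix} and then to check that those solutions are mutually consistent on the overlaps of the six cases. First I would apply Lemma~\ref{l:cg} to realize each of $b$ and $b'$ as a pairwise weakly increasing tensor product of vertices forming a directed path (a chain) in $G_{6,1;0}$. Because every vertex of Figure~\ref{f:61_hw_graph} except $d$ and $d'$ carries a loop, the only freedom is the multiplicity of each vertex; moreover the support of a chain is totally ordered, so the order of the tensor factors is forced. Hence a node of $H_s^{\{6,1\};0}$ is completely determined by the vector of multiplicities of the eleven vertices, and it suffices to recover this vector from $(\wt_2, \wt_3, \wt_5, \varphi_1, \varphi_6)$.

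Next I would translate the five statistics into the parameters $(L_2, L_3, L_5, j, k)$ used in Appendix~\ref{s:appendix}. Indeed $\wt_i(b) = L_i$ for $i \in \{2,3,5\}$, the relation $\varphi_6 = j - (L_2 + L_3 + L_5)$ recovers $j$, and $\varphi_1 = k - j$ (Lemma~\ref{l:solve_equations}) then recovers $k$; in particular $k = \varphi_1 + \varphi_6 + L_2 + L_3 + L_5 = k'$, so $b$ and $b'$ lie in the same component. Each of $b$ and $b'$ is supported on one of the six maximal chains of $G_{6,1;0}$, and along that maximal chain the vertex multiplicities are the unique solution of the corresponding linear system of Appendix~\ref{s:appendix}, the constraint $d \in \{0,1\}$ (respectively $d' \in \{0,1\}$) being pinned down by the parity of the relevant right-hand side. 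The linear-extension argument in the proof of Lemma~\ref{l:solve_equations} shows that the six validity ranges exhaust all admissible parameters, so at least one case applies to each node.

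The remaining, and in my view the only substantive, step is to exclude the possibility that $b$ and $b'$ are supported on two different maximal chains while sharing the same parameters. This can occur only when $(L_2, L_3, L_5, j, k)$ lies in the intersection of the validity ranges of two cases, which by the inequalities listed in the proof of Lemma~\ref{l:solve_equations} forces a boundary equality such as $(k-j) = \varphi_6$, $(k-j) + L_3 = \varphi_6$, or $(k-j) + L_3 = \varphi_6 + L_5$. On each such boundary the vertices that distinguish the two chains are forced to multiplicity $0$: for instance, along $(k-j) = \varphi_6$ the solution of Case~2 has $b = \varphi_6 - (k-j) = 0$ while that of Case~3 has $b' = (k-j) - \varphi_6 = 0$, so both collapse onto the common subchain supported on $\{a, c', e, f\}$ with identical multiplicities. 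Since there are finitely many overlapping pairs and each overlap is cut out by explicit linear equalities on which the solution formulas are themselves linear, verifying agreement is a finite computation; this is precisely the effective procedure advertised in Appendix~\ref{s:appendix_b}, and performing it for every overlapping pair yields $b = b'$. The main obstacle is organizing this overlap bookkeeping so that the multiplicities of all eleven vertices, including those with forced value $0$ in a given case, are compared consistently across the differing variable sets of the two cases.
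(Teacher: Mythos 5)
Your argument is correct in outline, but it takes a genuinely different route from the paper's proof of this proposition. The paper does not argue case-by-case over the six maximal chains; instead it forms the $5\times 11$ matrix $A$ of statistics evaluated on the vertices of $G_{6,1;0}$, observes that the difference of two nodes with equal statistics would be a nullspace vector whose positive and negative supports each lie on a maximal chain, and then computes all $81$ circuits of the oriented matroid of $A$ (via the standard minor formula) to check that no circuit is conformal with a pair of chains. Your approach is instead the ``by hand'' verification that the paper only alludes to in the proof of Lemma~\ref{l:solve_equations}: show that within each maximal chain the linear system has a unique admissible solution (with $d,d'\in\{0,1\}$ resolved by parity), and that on the overlaps of the six validity regions the solutions collapse to a common subchain. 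Both routes ultimately defer to a finite computation; the circuit method buys automation and avoids tracking the validity regions at all, while your method is more elementary but requires comparing solutions across all overlapping pairs of cases with their differing variable sets, which is exactly the bookkeeping you identify as the main obstacle. Note that you slightly misattribute your overlap check to Appendix~\ref{s:appendix_b} --- the ``effective procedure'' there is the circuit computation, not the case comparison.

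Two small points to tighten. First, you invoke Lemma~\ref{l:solve_equations} for $\varphi_1 = k-j$, but that lemma concerns the nodes constructed as images of $I\setminus\{1\}$-highest weight nodes, so using it here risks circularity; it is cleaner to observe directly that every column of the matrix $A$ has $\wt_2+\wt_3+\wt_5+\varphi_1+\varphi_6 = 1$, so that $k = \varphi_1+\varphi_6+L_2+L_3+L_5$ holds for an arbitrary chain and in particular forces $k=k'$. Second, when you say the multiplicities along a maximal chain are ``the unique solution of the corresponding linear system,'' you should note that for the four five-vertex chains the columns are linearly independent so uniqueness is immediate, while for the two six-vertex chains (through $d$ and through $d'$) the kernel is spanned by $c-2d+e$, respectively $c''-2d'+e'$, and it is precisely the constraint $d,d'\in\{0,1\}$ that restores uniqueness; this is where the hypothesis that $d\otimes d$ and $d'\otimes d'$ are not weakly increasing enters, and it should be said explicitly.
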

\begin{proof}
Let $A = (a_{i,j})$ be the matrix where $a_{i,j}$ is the $i$th entry of $(\wt_2,
\wt_3, \wt_5, \varphi_1, \varphi_6)$ applied to the $j$th entry of $(a, b, b', c,
c', c'', d, d', e, e', f)$ from $G_{6,1;0}$.  Then,

\setcounter{MaxMatrixCols}{12}
\[
A = \begin{bmatrix}
-1 & -1 & -1 &  0 & -1 &  0 &  0 &  0 &  0 &  0 &  1 \\
 0 &  1 &  0 &  0 &  1 & -1 &  0 &  0 &  0 &  1 &  0 \\
 0 &  0 &  1 & -1 &  1 &  0 &  0 &  0 &  1 &  0 &  0 \\
 1 &  0 &  1 &  0 &  0 &  2 &  0 &  1 &  0 &  0 &  0 \\
 1 &  1 &  0 &  2 &  0 &  0 &  1 &  0 &  0 &  0 &  0 \\
\end{bmatrix}
\]

If there were two solutions for a given set of parameters $(\wt_2, \wt_3, \wt_5,
\varphi_1, \varphi_6)$ then we could subtract them to obtain a vector in the
nullspace of $A$.  Moreover, the positive coordinates of this vector would
correspond to nodes in $G_{6,1;0}$ that all lie on a maximal chain, and similarly
for the negative coordinates of the vector.

The nullspace of $A$ is spanned by the rows of the following
matrix.
\[
\begin{bmatrix}
a & b & b' & c & c' & c'' & d & d' & e & e' & f \\
1 & 0 & 0 & 0 & 0 & 0 & -1 & -1 & 0  & 0  & 1 \\
0 & 1 & 0 & 0 & 0 & 0 & -1 & 0  & 0  & -1 &  1 \\
0 & 0 & 1 & 0 & 0 & 0 & 0  & -1 & -1 & 0  & 1 \\
0 & 0 & 0 & 1 & 0 & 0 & -2 & 0  & 1  & 0  & 0 \\
0 & 0 & 0 & 0 & 1 & 0 & 0  & 0  & -1 & -1 & 1 \\
0 & 0 & 0 & 0 & 0 & 1 & 0  & -2 & 0  & 1  & 0 \\
\end{bmatrix}
\]
Although the nullspace is nontrivial, observe that no basis vector actually
corresponds to a valid relation because in every case we have that either the
positive entries or the negative entries in the basis vector violate the
constraint that the multiplicities lie on a maximal chain in $G_{6,1;0}$, so as
to form a weakly increasing tensor product.

Next, we show that these chain constraints are actually violated for every vector
in the nullspace of $A$.  To see this, consider that every minimal linear
dependence among the columns $\{u_1, \ldots, u_{11}\}$ of $A$ has the form
$\sum_{i=1}^{11} c_i u_i = 0$.  Define $\sgn(x)$ to be $0$, $-1$, or $1$, if $x$
is $0$, $<0$ or $>0$, respectively.  The collection of all sign vectors
$(\sgn(c_1), \ldots, \sgn(c_{11}))$ obtained from minimal linear dependencies
among the columns of $A$ forms what are known as the \em circuits \em of  an \em oriented
matroid\em.  Moreover, there is a formula to find these circuits that is given
in terms of certain minors of $A$.

To be precise, let $\{ v_1, \ldots, v_{11} \}$ denote the columns of $A$.
Then, we define $\chi_A : \{1, \cdots, 11\}^5 \rightarrow \{-1, 0, 1\}$ by
$\chi_A(i_1, \ldots, i_5) = \sgn \ \det(v_{i_1}, \ldots, v_{i_5})$.  Consider
\[ C : \{1, \ldots, 11\}^6 \rightarrow \{-1, 0, 1\}^{11} \]
where $C(i_1, \ldots, i_6)$ is defined by 
\[ ( \chi_A( (i_1, \ldots i_6) \setminus 1 ) (-1)^{j(1)+1}, \chi_A( (i_1, \ldots i_6) \setminus 2) (-1)^{j(2)+1}, \ldots, \chi_A( (i_1, \ldots i_6) \setminus 11) (-1)^{j(11)+1}). \]
Here, $j(m)$ denotes the index $j$ such that $i_j = m$, and we interpret
$\chi_A( (i_1, \ldots i_6) \setminus m )$ as 0 if $m \notin (i_1, \ldots,
i_6)$.  It then follows from \cite[Section 1.5]{om:1999} that the circuits are
precisely the set
\[ \{ C(i_1, \ldots, i_6) : (i_1, \ldots, i_6) \in \{1, \ldots, 11\}^6 \}
\setminus (0, 0, \ldots, 0). \]

Using this formula, we have computed that $A$ has 81 circuits and determined
that each of them violates the chain constraints from $G_{6,1;0}$.  Therefore, we
have that there is a unique solution for any given set of parameters $(\wt_2,
\wt_3, \wt_5, \varphi_1, \varphi_6)$.
\end{proof}


\bigskip

\begin{thebibliography}{BLVS{\etalchar{+}}99}

\bibitem[BFKL06]{BFKL:2006}
Georgia Benkart, Igor Frenkel, Seok-Jin Kang, and Hyeonmi Lee.
\newblock Level 1 perfect crystals and path realizations of basic
  representations at {$q=0$}.
\newblock {\em Internat. Math. Res. Not.}, pages Art. ID 10312, 28, 2006.

\bibitem[BLVS{\etalchar{+}}99]{om:1999}
Anders Bj{\"o}rner, Michel Las~Vergnas, Bernd Sturmfels, Neil White, and
  G{\"u}nter~M. Ziegler.
\newblock {\em Oriented matroids}, volume~46 of {\em Encyclopedia of
  Mathematics and its Applications}.
\newblock Cambridge University Press, Cambridge, second edition, 1999.

\bibitem[Cha01]{Chari:2001}
Vyjayanthi Chari.
\newblock On the fermionic formula and the {K}irillov-{R}eshetikhin conjecture.
\newblock {\em Internat. Math. Res. Notices}, (12):629--654, 2001.

\bibitem[FOS08]{fos:2009b}
Ghislain Fourier, Masato Okado, and Anne Schilling.
\newblock Perfectness of {K}irillov--{R}eshetikhin crystals for nonexceptional
  types.
\newblock {\em Contemp. Math., to appear}, 2008.

\bibitem[FOS09]{fos:2009a}
Ghislain Fourier, Masato Okado, and Anne Schilling.
\newblock {K}irillov-{R}eshetikhin crystals for nonexceptional types.
\newblock {\em Advances in Mathematics}, 222:1080--1116, 2009.

\bibitem[Gre07]{green:2007}
R.~M. Green.
\newblock Full heaps and representations of affine {K}ac-{M}oody algebras.
\newblock {\em Int. Electron. J. Algebra}, 2:137--188 (electronic), 2007.

\bibitem[Gre08]{green:2008}
R.~M. Green.
\newblock Full heaps and representations of affine {W}eyl groups.
\newblock {\em Int. Electron. J. Algebra}, 3:1--42, 2008.

\bibitem[HK02]{hk:2002}
Jin Hong and Seok-Jin Kang.
\newblock {\em Introduction to quantum groups and crystal bases}, volume~42 of
  {\em Graduate Studies in Mathematics}.
\newblock American Mathematical Society, Providence, RI, 2002.

\bibitem[HKO{\etalchar{+}}99]{HKOTY}
G.~Hatayama, A.~Kuniba, M.~Okado, T.~Takagi, and Y.~Yamada.
\newblock Remarks on fermionic formula.
\newblock In {\em Recent developments in quantum affine algebras and related
  topics ({R}aleigh, {NC}, 1998)}, volume 248 of {\em Contemp. Math.}, pages
  243--291. Amer. Math. Soc., Providence, RI, 1999.

\bibitem[HKO{\etalchar{+}}02]{HKOTT}
Goro Hatayama, Atsuo Kuniba, Masato Okado, Taichiro Takagi, and Zengo Tsuboi.
\newblock Paths, crystals and fermionic formulae.
\newblock In {\em Math{P}hys odyssey, 2001}, volume~23 of {\em Prog. Math.
  Phys.}, pages 205--272. Birkh\"auser Boston, Boston, MA, 2002.

\bibitem[HN06]{hernandez_nakajima:2006}
David Hernandez and Hiraku Nakajima.
\newblock Level 0 monomial crystals.
\newblock {\em Nagoya Math. J.}, 184:85--153, 2006.

\bibitem[Hos07]{hoshino:2007}
Ayumu Hoshino.
\newblock Generalized {L}ittlewood-{R}ichardson rule for exceptional {L}ie
  algebras {$E_6$} and {$F_4$}.
\newblock In {\em Lie algebras, vertex operator algebras and their
  applications}, volume 442 of {\em Contemp. Math.}, pages 159--169. Amer.
  Math. Soc., Providence, RI, 2007.

\bibitem[Kas95]{K:1995}
Masaki Kashiwara.
\newblock On crystal bases.
\newblock In {\em Representations of groups ({B}anff, {AB}, 1994)}, volume~16
  of {\em CMS Conf. Proc.}, pages 155--197. Amer. Math. Soc., Providence, RI,
  1995.

\bibitem[Kas02]{K:2002}
Masaki Kashiwara.
\newblock On level-zero representations of quantized affine algebras.
\newblock {\em Duke Math. J.}, 112(1):117--175, 2002.

\bibitem[Kas05]{K:2005}
Masaki Kashiwara.
\newblock Level zero fundamental representations over quantized affine algebras
  and {D}emazure modules.
\newblock {\em Publ. Res. Inst. Math. Sci.}, 41(1):223--250, 2005.

\bibitem[KKM{\etalchar{+}}92]{KKMMNN:1992}
Seok-Jin Kang, Masaki Kashiwara, Kailash~C. Misra, Tetsuji Miwa, Toshiki
  Nakashima, and Atsushi Nakayashiki.
\newblock Perfect crystals of quantum affine {L}ie algebras.
\newblock {\em Duke Math. J.}, 68(3):499--607, 1992.

\bibitem[KMOY07]{KMOY:2007}
M.~Kashiwara, K.~C. Misra, M.~Okado, and D.~Yamada.
\newblock Perfect crystals for {$U\sb q(D\sp {(3)}\sb 4)$}.
\newblock {\em J. Algebra}, 317(1):392--423, 2007.

\bibitem[KN94]{KN}
Masaki Kashiwara and Toshiki Nakashima.
\newblock Crystal graphs for representations of the {$q$}-analogue of classical
  {L}ie algebras.
\newblock {\em J. Algebra}, 165(2):295--345, 1994.

\bibitem[Kod08]{kodera}
Ryosuke Kodera.
\newblock A generalization of adjoint crystals for the quantized affine
  algebras of type {$A_{n}^{(1)}$}, {$C_{n}^{(1)}$} and {$D_{n+1}^{(2)}$}.
\newblock {\em preprint {\tt arXiv:0802.3964}}, 2008.

\bibitem[Lit96]{littelmann:1996}
Peter Littelmann.
\newblock A plactic algebra for semisimple {L}ie algebras.
\newblock {\em Adv. Math.}, 124(2):312--331, 1996.

\bibitem[LP08]{lenart-postnikov}
Cristian Lenart and Alexander Postnikov.
\newblock A combinatorial model for crystals of {K}ac-{M}oody algebras.
\newblock {\em Trans. Amer. Math. Soc.}, 360(8):4349--4381, 2008.

\bibitem[LS86]{LS:1986}
V.~Lakshmibai and C.~S. Seshadri.
\newblock Geometry of {$G/P$}. {V}.
\newblock {\em J. Algebra}, 100(2):462--557, 1986.

\bibitem[Mag06]{magyar:2006}
Peter Magyar.
\newblock Littelmann paths for the basic representation of an affine {L}ie
  algebra.
\newblock {\em J. Algebra}, 305(2):1037--1054, 2006.

\bibitem[Nak03]{nakajima:2003}
Hiraku Nakajima.
\newblock {$t$}-analogs of {$q$}-characters of {K}irillov-{R}eshetikhin modules
  of quantum affine algebras.
\newblock {\em Represent. Theory}, 7:259--274 (electronic), 2003.

\bibitem[OS08]{OS:2008}
Masato Okado and Anne Schilling.
\newblock Existence of {K}irillov-{R}eshetikhin crystals for nonexceptional
  types.
\newblock {\em Represent. Theory}, 12:186--207, 2008.

\bibitem[OSS03]{OSS:2003}
Masato Okado, Anne Schilling, and Mark Shimozono.
\newblock Virtual crystals and {K}leber's algorithm.
\newblock {\em Comm. Math. Phys.}, 238(1-2):187--209, 2003.

\bibitem[SCc09]{sage-combinat}
The {S}age-{C}ombinat community.
\newblock {S}age-{C}ombinat: enhancing {S}age as a toolbox for computer
  exploration in algebraic combinatorics, 2009.
\newblock {\url{http://combinat.sagemath.org}}.

\bibitem[Sch08]{schilling:2008}
Anne Schilling.
\newblock Combinatorial structure of {K}irillov-{R}eshetikhin crystals of type
  {$D\sp {(1)}\sb n,B\sp {(1)}\sb n,A\sp {(2)}\sb {2n-1}$}.
\newblock {\em J. Algebra}, 319(7):2938--2962, 2008.

\bibitem[Ste03]{stembridge:2003}
John~R. Stembridge.
\newblock A local characterization of simply-laced crystals.
\newblock {\em Trans. Amer. Math. Soc.}, 355(12):4807--4823 (electronic), 2003.

\bibitem[WSea09]{sage}
{W}illiam {S}tein~et al.
\newblock {S}age {M}athematics {S}oftware (version 4.1.1), 2009.
\newblock {\url{http://www.sagemath.org}}.

\end{thebibliography}

\newcommand{\etalchar}[1]{$^{#1}$}

\end{document}